%------------------------------------------------------------------------------
% Here please write the date of submission of paper or its revisions:
%------------------------------------------------------------------------------
%
\documentclass[12pt, reqno]{amsart}
\usepackage{amsmath, amsthm, amscd, amsfonts, amssymb, graphicx, color}
\usepackage[bookmarksnumbered, colorlinks, plainpages]{hyperref}
\hypersetup{colorlinks=true,linkcolor=red, anchorcolor=green, citecolor=cyan, urlcolor=red, filecolor=magenta, pdftoolbar=true}

\textheight 22.5truecm \textwidth 14.5truecm
\setlength{\oddsidemargin}{0.35in}\setlength{\evensidemargin}{0.35in}

\setlength{\topmargin}{-.5cm}

\newtheorem{theorem}{Theorem}[section]
\newtheorem{lemma}[theorem]{Lemma}
\newtheorem{proposition}[theorem]{Proposition}

\theoremstyle{definition}
\newtheorem{definition}[theorem]{Definition}
\newtheorem{example}[theorem]{Example}

\theoremstyle{remark}
\newtheorem{remark}[theorem]{Remark}
\numberwithin{equation}{section}

\begin{document}
\setcounter{page}{1}

\author[Bhat, B. V. Rajarama]{B. V. Rajarama Bhat}
\author[Mukherjee, M.]{Mithun  Mukherjee}
\title[TWO STATES]{ TWO STATES}
\address[Bhat, B. V. Rajarama]{Theoretical Statistics and Mathematics Unit, Indian Statistical Institute, R. V. College Post, Bangalore-560059, India}
\email{bhat@isibang.ac.in}
\address[Mukherjee, M.]
	{School of Mathematics, IISER, Thiruvananthapuram, Maruthamala PO, Vithura, Kerala - 695551, India} 

\address{Current Address: School of Mathematical \& Computational Sciences, Indian Association for the Cultivation of Science, Jadavpur, Kolkata, 700032, India}
\email{mithun.mukherjee@iacs.res.in}

\maketitle
\vskip 10pt

\begin{center}
{\bf Abstract\footnote {AMS Subject Classification: 46L30, 46L08.
Keywords: States, Completely positive maps, Hilbert $C^*$-modules,
Bures distance.}}

\end{center}

\vskip 4pt

%\end{center}
\begin{abstract}

D. Bures defined a metric  $\beta $ on states of a $C^*$-algebra and
this concept has been generalized to unital completely positive maps
$\phi : \mathcal A \to \mathcal B$, where $\mathcal B$ is either an
injective $C^*$-algebra or a von Neumann algebra. We introduce a new
distance $\gamma $ for the same classes of unital completely
positive maps. We use in our definition the distance between
representations on the same Hilbert $C^*$-module in contrast to the
Bures metric which uses one representation and distinct vectors.
This metric can be expressed in terms of a class of  completely
positive maps on free products of $C^*$-algebras and in this setting
$\gamma $ looks like Wasserstein metric on probability measures.
Surprisingly, when the range algebra $\mathcal B$ is injective,
$\gamma $ and $\beta $ are related by the following explicit
formula: $\beta ^2= 2-\sqrt{4- \gamma ^2} .$ A deep result of Choi
and Li on constrained dilation is the main tool in proving this
formula.

%This study yields new understanding of GNS representations of states
%and Stinespring representations of completely positive maps.

\end{abstract}

\maketitle

\section{Introduction}

Given a state $\phi$ on a unital $C^*$-algebra $\mathcal A,$ the
well-known Gelfand-Naimark-Segal (GNS)-construction yields a triple
$(H, \pi, x),$ where $H$ is a Hilbert space, $\pi : \mathcal A
\rightarrow \mathbb B(H)$ is a representation ($*$-homomorphism) on
the algebra $\mathbb B(H)$ of all bounded operators on $H$, and
$x\in H$ is a vector such that $\phi(\cdot) = \langle x, \pi(\cdot)x
\rangle.$ What is the geometry of states from the point of view of
their GNS representations is a natural question. More specifically,
given two states which are close in norm,  can we choose GNS triples
for them, which are also close in some sense?  It is tricky to
measure closeness of two triples. Bures \cite{Bur} took the
following approach.

Bures distance  of two states $\phi_1,\phi_2$ on $\mathcal B,$ is
defined as
  $\beta(\phi_1,\phi _2)=\mbox{inf}\|x_1-x_2\|$, where the infimum is taken over all GNS-triples
  with `common' representation spaces: $(H, \pi, x_1), (H, \pi, x_2)$ of $\phi_1, \phi_2.$ Here in
  two GNS triples, two of the components namely the Hilbert space and the representation are taken
  to be common, and the distance is measured only for the vectors. Perhaps, it would be equally
  natural to define another notion $\gamma $ as, $\gamma (\phi _1, \phi _2)= \mbox{inf}
   \| \pi _1-\pi _2\| _{cb}$, where $\| \cdot \| _{cb}$ stands for completely bounded norm and the
   infimum is now taken over all GNS-triples with `joint' representation spaces: $(H, \pi _1, x),
   (H, \pi _2, x)$ of $\phi _1, \phi _2.$ In other words, the Hilbert space and vector are common and
    only the representations are different. We explore this notion here.

 Before we go further, we remark that this circle of ideas have natural extensions from states to
 completely positive maps and it is convenient to start with  such a general set up.  The notion of
 Bures metric for completely positive maps was introduced
at first for completely positive (CP) maps from a $C^*$-algebra $\mathcal A$ to $\mathbb B(G)$ for
some
Hilbert space $G$, by Kretschmann, Schlingemann and Werner in \cite{Con}.   In the same paper extension
 of the notion to more general range $C^*$-algebras has been presented through an alternative somewhat indirect
 definition of the Bures distance. The reason for this is that, these authors  use the Stinespring
  representation (\cite{Sti}) for the initial definition, which in the usual formulation requires
   the range space to be
the whole algebra $\mathbb B(G).$   This artificiality can be
removed if one uses the theory of Hilbert $C^*$-modules. This has been
carried out by Bhat and Sumesh \cite{BhS}. Making use of basic ideas
from Hilbert $C^*$-module theory, it is seen that $\beta$ is indeed
a  metric when the range algebra is an injective algebra or a von
Neumann algebra. A counter example
  is also presented in \cite{BhS} that one may not even get a metric when the range algebra
  is a general $C^*$-algebra.

The notion of Bures metric has found many mathematical and physical
applications ( \cite{Ara},  \cite{GMW}, \cite{Kos}). There has been
some renewed interest in the subject due to applications in quantum
information theory (\cite{SZ}, \cite{Uh1}, \cite{Uh2}). The
generalized version as distance for CP maps also has applications in
this field \cite{KSW}. Our interest in this topic stems from  its
usefulness in the study of generators of quantum dynamical
semigroups \cite{Muk}.

 Now the revised set up is as follows. Let $\phi _1, \phi _2$ be two completely positive maps
 from a unital $C^*$-algebra $\mathcal A$ to another unital $C^*$-algebra $\mathcal
 B.$ We call $\mathcal B$ as the range algebra.
 The Stinespring's theorem in Hilbert $C^*$-module language (see \cite{Pas}), provides {\em Stinespring
 triples\/} or GNS representation modules $(\mathcal E _1, \sigma _1, x_1), (\mathcal E_2, \sigma _2, x_2)$,
 where  $\mathcal E_i$ is a  Hilbert $\mathcal B$ module with a left action
  $\sigma _i$ of $\mathcal A$ on it and a vector $x_i\in \mathcal E_i$, such that
 $$\phi _i(\cdot )= \langle x_i, \sigma _i(\cdot )x_i\rangle ,$$
 for $i=1,2$.  Then mimicking the definitions for states we
 have $$\beta (\phi _1 , \phi _2)= \inf \|x_1-x_2\|,$$ where the infimum is taken over all
 `common' representation modules $(\mathcal E, \sigma , x_1),$ $ (\mathcal E, \sigma , x_2)$ of
 $ (\phi _1, \phi _2).$ Similarly, we can define,
 $$\gamma (\phi _1, \phi _2)= \inf \| \sigma _1-\sigma _2\| _{cb},$$ where `$cb$' stands for the completely bounded norm and
 the infimum is taken
 over all `joint' representation modules $(\mathcal E, \sigma _1, x),$  $ (\mathcal E, \sigma _2, x)$ of
  $(\phi _1, \phi _2).$

 For lack of better name, we call  $\gamma $ as
`representation metric'. In this paper we study basic
 properties of    $\gamma (\phi _1, \phi _2)$ and its relationship with
 $\beta (\phi _1, \phi _2)$. We restrict ourselves to unital completely positive maps.
   We show that $\gamma $ is indeed a metric if the range algebra under
consideration is a von Neumann algebra or an injective
$C^*$-algebra,  exactly like in the case of $\beta $. In \cite{BhS}
an example can be seen where the triangle inequality fails for
$\beta $ (Of course, then the range algebra is a non-injective
$C^*$-algebra). We do not yet have any such  example for $\gamma .$

   The organisation of the paper is as follows. In Section 2, we present
   some basics on Hilbert $C^*$-modules and von    Neumann modules. We describe the GNS module associated with
   a completely positive map.

   In Section 3, we define $\gamma $. We show that $\gamma $ is a metric
    when the range algebra under consideration is a von Neumann algebra. We also show that the representation metric $\gamma $
     does not change on ampliation.

   In Section 4,  we show that the new notion $\gamma $ has a beautiful description in terms of
    the full free product of $C^*$-algebras. This is a feature not seen for Bures metric. This association
    with free product allows us to interpret representation metric as a notion coming from `joint distributions with
    given marginals', comparable to Wasserstein metric of probability measures. In a broad sense it is also somewhat like
    Gromov-Hausdorff distance for metric spaces. This allows us to
    show that $\gamma $ is a metric when the range algebra is an
    injective $C^*$-algebra.

      In Section 5, we address the attainability issue of the representation metric, i.e., we show that
      given two unital completely positive maps, there is a joint representing module in which the
   representation metric attains its value. This holds true when the
   range algebra is a von Neumann algebra or is an injective
   $C^*$-algebra.

     In Section 6, we establish a very interesting
direct relation of this metric with Bures metric. This  may be
considered as a new formula to compute Bures metric. We prove the
result for states and then extend it to the  case of  injective
$C^*$-algebras as range algebras. We do not know whether this is
true for von Neumann algebras. It is also unclear as to what is the
suitable extension of this theory to non-unital CP maps.

Finally, in the last section we have several examples and counter examples. In particular we show that the range algebra does matter for computing the
representation metric.

\section{Notation and basics of Hilbert $C^*$-modules}

Let $\mathcal B$ be a unital $C^*$-algebra. A complex vector space
$\mathcal E$ is a Hilbert  $C^*$ right $\mathcal B$-module (or
simply a Hilbert $\mathcal B$-module) if it is a right $\mathcal
B$-module with a $\mathcal B$-valued inner product, which is
complete with respect to the associated norm (see \cite{Lan},
\cite{Pas}, \cite{Ske} for basic theory). We denote the space of all
bounded and adjointable maps between two Hilbert $\mathcal
B$-modules $\mathcal E_1$ and $\mathcal E_2$ by
$\textbf{B}^a(\mathcal E _1,\mathcal E _2).$ In particular, if
$\mathcal E_1 = \mathcal E_2 = \mathcal E,$ then $=
\textbf{B}^a(\mathcal E):= \textbf{B}^a(\mathcal E,\mathcal E) ,$
and it is a unital $C^*$-algebra with natural algebraic operations
and operator norm.

 Suppose the $C^*$-algebra $\mathcal B$ is given concretely as an algebra of
 operators on a Hilbert space $G$, that is, $\mathcal B\subseteq
 \mathbb B(G).$ Here and elsewhere in this article whenever we have such an embedding
 without loss of
 generality  we would assume that the unit of $\mathcal B$ is same as
 the identity operator on $G.$ Now if $\mathcal E$ is a Hilbert
 $\mathcal B$-module, we would like to realize it as a subspace
 of  $\mathbb B(G,H)$ for some Hilbert space $H$, with
 $\langle Y, Z\rangle =Y^*Z$ in $\mathbb B(G,H).$ This is done as
 follows:  Given a Hilbert $\mathcal B$-module $\mathcal E,$
 we define the Hilbert
space $H := \mathcal E \odot G$ as the Hilbert space obtained from
the algebraic tensor product $\mathcal E \otimes G$, with semi-inner
product: $$\langle x \otimes g, x^\prime \otimes g^\prime \rangle :=
\langle g,
 (\langle x, x^\prime \rangle) g^\prime \rangle, ~~~\mbox{for}~~ x, x^\prime \in \mathcal E; g, g^\prime
  \in G;$$ after quotienting the space of null vectors, and completing.
  We denote the equivalence class containing $x \otimes g$ by $x \odot g.$ To each $x \in
  \mathcal E,$ we associate the
linear map $L_x : g \mapsto  x \odot g$ in $\mathbb B(G,H)$ with
adjoint $L^*_x : y \odot g \mapsto  (\langle x, y\rangle)g.$ Clearly
$L^*_xL_y = \langle x, y\rangle$ and $L_{xb} = L_x .(b)$ for all $x,
y \in \mathcal E,$ $b \in \mathcal B.$ Also $\|L_x\|^2 = \|(\langle
x, x\rangle)\| = \|x\|^2.$ By identifying $x$ with $L_x,$ we may
assume that $\mathcal E \subseteq \mathbb B(G,H).$ Note that $
a\mapsto  a\otimes id_G :
 \mathcal B^a(\mathcal E) \rightarrow \mathcal B(H)$ is a unital $*$-homomorphism and hence an
 isometry. So we may consider $\textbf B^a(\mathcal E) \subseteq \mathbb B(H).$

Suppose $\mathcal A$ is another unital $C^*$-algebra. A Hilbert
$\mathcal B$-module $\mathcal E$ is said to be a Hilbert $\mathcal
A-\mathcal B$ bi-module if there exists a unital $*$-representation
$\sigma : \mathcal A \rightarrow \textbf B^a(\mathcal E)$. If
$\mathcal E$ is a Hilbert $\mathcal A-\mathcal B$ bi-module, and the
left action $\sigma $ is fixed and there is no possibility of
confusion, we may describe the left action simply by $x\mapsto ax$,
 and thereby $\sigma(a)x = ax$ for all $x \in \mathcal E, a \in
\mathcal A.$  Further, if $\mathcal B\subseteq \mathbb B(G) $ and
$H=\mathcal E\odot G$ as above, then $\rho(a) :=\sigma ( a) \otimes
id_G,$ is a representation of $\mathcal A$ on $H$, mapping $x\odot
g$ to $\sigma(a)x\odot g.$
 Observe  that $L_{ax} = \rho(a)L_x.$ Also $\mathbb
B(G,H)$ forms a Hilbert $\mathcal A-\mathbb B(G)$ bi-module with left
action $ax := \rho(a)x.$ If $\mathcal E_1$ and $\mathcal E_2$ are
two Hilbert $\mathcal A-\mathcal B$ bi-modules, then a linear map $\Phi
: \mathcal E_1 \rightarrow \mathcal E_2$ is said to be $\mathcal
A-\mathcal B$-linear (or bilinear) if $\Phi(axb) = a\Phi(x)b$ for
all $a \in \mathcal A, b \in \mathcal B, x \in \mathcal E.$ The
space of all bounded, adjointable and bilinear maps from $\mathcal
E_1$ to $\mathcal E_2$ is denoted by $\textbf B^{a,
\mbox{bil}}(\mathcal E_1,\mathcal E_2).$ If $\mathcal E$ is a
Hilbert $\mathcal A-\mathcal B$ bi-module, then $\textbf B^{a,
\mbox{bil}}(\mathcal E)$ is the relative commutant of the image of
$\mathcal A$ in $\textbf B^a(\mathcal E).$

Now suppose $\mathcal B \subset \mathbb B(G)$ is a von Neumann
algebra (though out this article, by a von Neumann algebra we mean a
concrete strongly closed  $C^*$-subalgebra of $\mathbb B (G)$ for
some Hilbert space $G$ and not an abstract $W^*$-algebra). Suppose
$\mathcal E$ is a Hilbert $\mathcal B$-module. As explained before
we will identify $\mathcal E$ as a subspace of $\mathbb B(G, H)$
with $H= \mathcal E\odot G.$ Then we say $\mathcal E$ is a von
Neumann $\mathcal B$-module if $\mathcal E$ is strongly closed in
$\mathbb B(G,H) \subset \mathbb B(G \oplus H)$ (closure in the strong
operator topology (SOT)). Thus, if $x$ is an element in the strong
closure $\overline{\mathcal E}^s$ of a Hilbert $\mathcal B$-module
$\mathcal E,$ then there exists a net $(x_\alpha)$ in $ \mathcal E$
such that $L_{x_\alpha} \displaystyle{\xrightarrow{SOT}} L_x.$ All
von Neumann $\mathcal B$-modules are self-dual (in the sense that
all $\mathcal B$-valued functionals are given by a $\mathcal
B$-valued inner product with a fixed element of the module), and
hence they are complemented in all Hilbert $\mathcal B$-modules
which contain it as a $\mathcal B$-submodule. In particular,
strongly closed $\mathcal B$-submodules are complemented in a von
Neumann $\mathcal B$-module. If we use the identification $\textbf
B^a(\mathcal E) \subset \mathbb B(H),$ then $\textbf B^a(\mathcal
E)$ is a von Neumann subalgebra  of $\mathbb B(H).$

If $\mathcal A$ is a unital $C^*$-algebra, then by a von Neumann
$\mathcal A-\mathcal B$ bi-module we mean a von Neumann $\mathcal
B$-module $\mathcal E$ which is also a Hilbert $\mathcal A- \mathcal
B$ module. In particular, there is no normality assumption on the
left action $x\mapsto ax$ of $\mathcal A$ on $\mathcal E.$

Consider a von Neumann $\mathcal A-\mathcal B$ bi-module $\mathcal
E.$ If in addition, $\mathcal A$ is a von Neumann algebra
 and the left action by $\mathcal A$ is normal, then
 we call $\mathcal E$ a two-sided von Neumann $\mathcal A-\mathcal B$ bi-module. In the
 present article, we have no occasion to consider two-sided von
 Neumann modules as our domain algebra $\mathcal A$ is
 just a $C^*$-algebra and even when it is a von Neumann algebra we
 do not assume normality of left actions.
 For more details on these concepts  see (\cite{Pas},  \cite{Ske}, \cite{Sk2}).

It is well-known that if $\phi : \mathcal A \rightarrow \mathcal B$
is a completely positive map between unital $C^*$-algebras, then
there exists a Hilbert $\mathcal A-\mathcal B$-module $\mathcal E$
and $x \in \mathcal E$ such that $\phi(a) = \langle x, ax \rangle$
for all $a \in \mathcal A.$ The pair $(\mathcal E, x)$ is called a
GNS-construction for $\phi$ and $\mathcal E$ is called a GNS-module
for $\phi.$ If further, $\mathcal Ax\mathcal B:=
\overline{\mbox{span}} \{axb:a\in \mathcal A, b\in \mathcal B\}$
equals $ \mathcal E,$ then $(\mathcal E, x)$ is said to be a minimal
GNS-construction. The minimal GNS construction of $\phi $  is unique
up to isomorphism. One way to construct $\mathcal E$ is by starting
with $\mathcal A \otimes \mathcal B$ and defining a $\mathcal
B$-valued semi-inner product on it as $\langle a_1 \otimes b_1, a_2
\otimes b_2 \rangle := b^*_1\phi(a^*_1a_2)b_2,$ and carrying out
usual  quotienting and completion procedure (see \cite{Kas},
\cite{Mur}, \cite{Lan}, \cite{Pau}, \cite{Sti}).

% If $\mathcal A$ and  $\mathcal B$
%are von Neumann algebras and $\phi $ is normal, then $(\mathcal E, x)$ can be chosen such that $\mathcal E$ is a (two-sided)
%von Neumann $\mathcal A-\mathcal B$-module. Here the closure for minimality is taken under strong operator topology.

Note that if $\mathcal B = \mathbb B(G),$ then $L^*_x\rho(a)L_x = \langle x, ax\rangle = \phi(a)$ for all
 $a\in \mathcal A.$ Thus
$(H, \rho, L_x)$ is a Stinespring representation for the CP-map $\phi : \mathcal A \rightarrow \mathbb B(G).$ Note that we may take $\mathbb B(G,H)$ as a $\mathcal A$-$\mathbb B(G)$ bi-module with right action as operator multiplication and the left action is via $\rho.$

Here we recall the following simple but important observation on Hilbert $C^*$-modules with unit vectors. This result is a particular case of Dupre-Fillmore Theorem (\cite{HM}) and also can be found in Theorem 1.4.5, \cite{HCM}.

\begin{proposition} \label{complemented}
Let $\mathcal E$ be a Hilbert $C^*$-module on a unital $C^*$-algebra
$\mathcal B.$ Suppose $\mathcal E$ has a unit element $x$ (that
is, $\langle x, x\rangle =1$), then the module $xB$ is complemented
in $\mathcal E.$
\begin{proof} Every element $y\in \mathcal{E}$ decomposes as
$$y= x.\langle x, y\rangle + [y-x.\langle x, y\rangle ],$$
and it is easily seen that this is an orthogonal decomposition of
$\mathcal E.$

% Let $\phi\in CP(\mathcal A,\mathcal B)$ with $\mathcal A,\mathcal B$ unital $C^*$-algebras and let $(\mathcal E, %x, \tau)$ be the minimal Stinespring module for $\phi.$ Then the Hilbert $C^*$-sub-module $x\mathcal B$ is %complemented in $\mathcal E.$
%\begin{proof} Take $\mathcal E ^0= \overline {\mbox {span}}\{ axb-x\phi(a)b: a\in \mathcal A , b\in \mathcal %B\}.$ Then clearly $E^0$ is a
%sub-module of $E$ as a Hilbert $B$-module and for $b_1\in \mathcal B,$
%$$\langle xb_1, axb-x\phi (a)b\rangle = b_1^*\phi (a)b-b_1^*\phi(1)\phi(a)b=0$$f
%as $\phi $ is unital. Decomposing $axb$ as $axb= (x\phi (a)b)+ (axb-x\phi(a)b)$, we see that
%$\mathcal E= xB\oplus \mathcal E ^0.$

\end{proof}

\end{proposition}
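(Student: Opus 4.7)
The plan is to exhibit an explicit orthogonal projection $P : \mathcal E \to x\mathcal B$ and verify directly that it yields the required complement. I would define
\[ P(y) := x \cdot \langle x, y \rangle, \qquad y \in \mathcal E, \]
which clearly maps into $x\mathcal B$ and is right $\mathcal B$-linear by the inner-product axioms.

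The key step, and the only place where the unit assumption enters, is checking idempotency:
\[ P(P(y)) = x \cdot \langle x,\, x \cdot \langle x, y\rangle\rangle = x \cdot \langle x, x\rangle \langle x, y\rangle = x \cdot \langle x, y\rangle, \]
using $\langle x, x\rangle = 1$. Next I would verify that $y - P(y)$ is orthogonal to every element of $x\mathcal B$: for any $b \in \mathcal B$,
\[ \langle xb,\; y - x\langle x, y\rangle\rangle = b^* \langle x, y\rangle - b^*\langle x, x\rangle \langle x, y\rangle = 0, \]
again by $\langle x, x \rangle = 1$. Combining these two facts yields the orthogonal direct sum decomposition $\mathcal E = x\mathcal B \oplus (x\mathcal B)^\perp$, which is exactly the assertion that $x\mathcal B$ is complemented.

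There is essentially no obstacle here. The one implicit point worth noting is that $x\mathcal B$ is automatically closed, since it is the range of the bounded idempotent $\mathcal B$-linear map $P$ (boundedness of $P$ is immediate from the Cauchy--Schwarz inequality on Hilbert $C^*$-modules, giving $\|P(y)\| \leq \|x\|\,\|y\| = \|y\|$). Conceptually, the proposition is just the module-theoretic incarnation of the elementary Hilbert-space fact that any unit vector spans a complemented one-dimensional subspace, with the condition $\langle x,x\rangle = 1$ playing exactly the role of normalization.
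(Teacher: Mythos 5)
Your proposal is correct and is essentially the paper's own argument made explicit: the paper uses exactly the decomposition $y = x\langle x,y\rangle + [y - x\langle x,y\rangle]$ and leaves the verification to the reader, while you spell out the idempotency, orthogonality, and closedness of the range of $P$. No substantive difference.
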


This result is readily applicable to minimal GNS construction of
unital completely positive maps, as the cyclic element readily
provides a unit vector.

In the complex plane and more generally in any metric space $X$, if $x\in X,$ and $B\subset X$, we take
distance between $x$ and $B$ as $d(x,B) = \inf \{ d(x,b): b\in B\}.$ The metric under
consideration should be clear from the context.

\section{The Representation metric}

Suppose $\mathcal A, \mathcal B$ are unital $C^*$-algebras. Denote
by $\mbox{UCP}(\mathcal A, \mathcal B),$  the set of all unital
completely positive maps from $\mathcal A$ to $\mathcal B.$ Let
$\phi_1,\phi_2 \in \mbox{UCP}(\mathcal A, \mathcal B).$ The Bures
distance between $\phi _1$ and $\phi _2$ is well known in the
literature. See \cite{Bur}, \cite{Ara}, \cite{Con}, \cite{BhS} for
more details. We wish to modify the setup slightly to get a new
metric. To begin with let us recall the definition of Bures
distance, using the Hilbert $C^*$-module set up \cite{BhS}.

\begin{definition}
A Hilbert $\mathcal A-\mathcal B$-module $\mathcal E$ is said to be
a {\em common representation module\/} for $\phi_1,\phi_2 \in
\mbox{UCP}(\mathcal A, \mathcal B)$ if both of them can be
represented in $\mathcal E,$ that is, there exist $x_i\in \mathcal
E$ such that $ \phi_i(a)=\langle x_i,ax_i\rangle ,  i = 1, 2.$ Then
the triple $(\mathcal E, x_1, x_2)$ is called a common
representation tuple of $(\phi_1,\phi_2 ).$
\end{definition}

Note that we are demanding no minimality for the common
representation module. So we can always have such a module. For, if
$(\mathcal E^i, x^i)$ is the minimal GNS-construction for $\phi_i$,
then take $\mathcal E = \mathcal E^1 \oplus \mathcal E^2,$ $x_1 =
x^1 \oplus 0$ and $x_2 = 0 \oplus x^2.$ For a common representation
module $\mathcal E,$ define $S(\mathcal E, \phi_i)$ to be the set of
all $x_i \in \mathcal E$ such that $\phi_i(a) = \langle x_i, ax_i\rangle$
for all $a\in \mathcal A.$ It is to be remembered that if
$\phi_1,\phi_2$ are states, then $\mathcal E$ is a Hilbert space and
$x_1, x_2$ are unit vectors in it.

\begin{definition}
Let $\mathcal E$ be a common representation module for $
\phi_1,\phi_2 \in \mbox{UCP}(\mathcal A,
 \mathcal B).$  Define $$ \beta_\mathcal E(\phi_1,\phi_2) = \mbox{inf}\{ \|x_1-x_2\|: x_i\in
 S(\mathcal E,\phi_i), i=1,2\} $$  and  $$ \beta(\phi_1,\phi_2) =\inf_\mathcal
  E~ \beta_\mathcal E(\phi_1,\phi_2) $$
 where the infimum is over all the common representation module
$\mathcal E.$
 \end{definition}

 It is shown in \cite{Con}, \cite{BhS} that $\beta $ is a metric
 when the range algebra $\mathcal B$ is a von Neumann algebra or is
 an injective $C^*$-algebra. The original definition was for states and was given by Bures in \cite{Bur}. In view of
 this we
  will call $\beta $ as the {\em Bures metric\/}. But it is important to keep in mind that the triangle inequality may fail for
 some  $\mathcal B.$ (See \cite{BhS}). Now we introduce a new
 notion.

\begin{definition}
 A Hilbert $C^*$ right $\mathcal B$ module $\mathcal E$  is said to be a {\em joint representation
 module\/}
 for $\phi_1,\phi_2 \in \mbox{UCP}(\mathcal A,\mathcal B)$ if there are two unital left $\mathcal A$ actions $\sigma_1$
 and $\sigma_2$ and a unital vector $x \in \mathcal E$ such that $$ \phi_1(a) = \langle x,
  \sigma_1(a) x \rangle, ~~ \phi_2(a) = \langle x, \sigma_2(a) x \rangle. $$ Here $
   (\mathcal {E},   \sigma _1, \sigma _2, x)$ is called a joint representation tuple
   of $(\phi _1, \phi _2).$
\end{definition}

In the following by `$\| \cdot \|_{cb}$' we mean the completely
bounded (CB) norm. We are taking the CB norm of differences of two
$*$-homomorphisms. Subsequent computations in the article show that
CB norm (and not the usual norm) is the right choice in the present
context.

\begin{definition}\label{Stinespring}
Let $\phi_1,\phi_2 \in \mbox{UCP}(\mathcal A, \mathcal B).$ Let
$\tilde {\mathcal E}= (\mathcal {E},   \sigma _1, \sigma _2, x)$ be
a joint representation tuple. Define
 $$\gamma ^{\tilde {\mathcal E}} (\phi_1,\phi_2)=\|\sigma_1-\sigma_2\|_{cb}.$$
  Define   $$ \gamma (\phi_1,\phi_2) := \inf_{(\mathcal E , \sigma _1, \sigma _2, x)}\| \sigma _1-\sigma _2\|_{cb} ,$$
  where the infimum is taken over all joint representation tuples.
\end{definition}
We will informally call $\gamma (\phi _1, \phi _2)$ as the {\em representation metric} or {\em representation distance} between $\phi _1, \phi _2$. It will be seen that under good situations it is
indeed a metric analogues to Bures metric. This notion can also be compared with Wasserstein metric.

For a Hilbert $\mathcal A-\mathcal B$ bi-module $\mathcal E$ with
left action $\sigma $ and $x\in \mathcal E,$ define $$\mathcal
A_\sigma x\mathcal B=\overline{\mbox{span}}~\{\sigma(a)xb: a\in
\mathcal A, b\in \mathcal B\}.$$ If the left action is clear from
the context, we denote it by $\mathcal A x\mathcal B.$

For a joint representation tuple $(\mathcal E,\sigma_1,\sigma_2,x),$ we define $$\mathcal A_{\sigma_1,\sigma_2}x\mathcal B=\overline{\mbox{span}}~\{\sigma_{\epsilon_1}(\mathcal A)\sigma_{\epsilon_2}(\mathcal A)\cdots\sigma_{\epsilon_k}(\mathcal A)x\mathcal B: \epsilon_i=1 ~\mbox{or}~2,k\geq 1\}.$$

\begin{definition}
A joint representation tuple $(\mathcal E,  \sigma _1, \sigma _2, x)$ is said to be {\em minimal}  if $\mathcal A_{\sigma_1,\sigma_2}x\mathcal B=\mathcal E.$
\end{definition}

\begin{remark}\label{remark Stinespring}
It  suffices to consider  minimal joint representation tuples while
taking the infimum  in Definition \ref{Stinespring}.
\end{remark}

Denote by $J(\phi_1,\phi_2)$,  the set of all joint representation
tuples for $\phi_1$ and $\phi_2.$

\begin{proposition}\label{non-empty} Let $\phi_1,\phi_2 \in \mbox{UCP}(\mathcal A, \mathcal B).$ Then
$J(\phi_1,\phi_2)$ is non-empty.
\end{proposition}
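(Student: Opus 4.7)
The goal is to produce a single tuple $(\mathcal{E}, \sigma_1, \sigma_2, x)$ in which one Hilbert $\mathcal{B}$-module $\mathcal{E}$ carries two unital $*$-homomorphisms $\sigma_1, \sigma_2 : \mathcal{A} \to \mathcal{B}^a(\mathcal{E})$ sharing a common unit vector $x$ with $\phi_j(a) = \langle x, \sigma_j(a) x\rangle$ for $j = 1, 2$. The difficulty is that the minimal GNS-modules $(\mathcal{E}^j, x^j)$ for $\phi_1$ and $\phi_2$ live on different modules with different cyclic vectors; a naive direct-sum construction loses either the unit-norm property of the shared vector or the requirement that both actions be genuine $*$-homomorphisms on the same module. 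The natural device that forces two left actions to coexist on a common cyclic vector is the unital full free product of $C^*$-algebras, and the introduction already flags this perspective as central to the paper.

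The plan is as follows. Form the unital full free product $\mathcal{C} = \mathcal{A} *_{\mathbb{C}} \mathcal{A}$ with canonical unital inclusions $\iota_1, \iota_2 : \mathcal{A} \hookrightarrow \mathcal{C}$. Appeal to Boca's theorem on the existence of the free product of UCP maps to produce $\Phi \in \mbox{UCP}(\mathcal{C}, \mathcal{B})$ with $\Phi \circ \iota_j = \phi_j$ for $j = 1, 2$. Pass to the minimal GNS-module $(\mathcal{E}, x)$ for $\Phi$, a Hilbert $\mathcal{C}$-$\mathcal{B}$-module with left action $\tau : \mathcal{C} \to \mathcal{B}^a(\mathcal{E})$; unitality of $\Phi$ gives $\langle x, x\rangle = \Phi(1) = 1$, so $x$ is a unit vector. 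Set $\sigma_j := \tau \circ \iota_j$, a unital $*$-homomorphism from $\mathcal{A}$ to $\mathcal{B}^a(\mathcal{E})$. Then
$$\langle x, \sigma_j(a) x\rangle = \langle x, \tau(\iota_j(a)) x\rangle = \Phi(\iota_j(a)) = \phi_j(a),$$
so $(\mathcal{E}, \sigma_1, \sigma_2, x) \in J(\phi_1, \phi_2)$, proving that $J(\phi_1, \phi_2)$ is non-empty. (As a bonus, since $\tau(\mathcal{C})$ is generated as a $*$-algebra by $\sigma_1(\mathcal{A}) \cup \sigma_2(\mathcal{A})$, this tuple is automatically minimal in the sense of the preceding definition.)

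The only non-trivial step is the invocation of Boca's theorem. If one wants a self-contained argument, one can instead build $\mathcal{E}$ directly in the spirit of Voiculescu's free product of Hilbert spaces: split each minimal GNS-module as $\mathcal{E}^j = x^j \mathcal{B} \oplus \mathcal{E}^{j, \circ}$ using Proposition~\ref{complemented}, and form an alternating direct sum of interior tensor products over $\mathcal{B}$ of the pieces $\mathcal{E}^{j, \circ}$, with $\sigma_1, \sigma_2$ defined on alternating words in the usual free-product manner. The main technical obstacle is verifying that both left actions are simultaneously well-defined, bounded, adjointable and unital on this module, which is precisely the content of Boca's theorem in the Hilbert-module formulation.
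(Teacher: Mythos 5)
Your argument is correct, but it takes a genuinely different route from the paper's. The paper's proof is elementary and self-contained: it takes the direct sum $\mathcal E = \mathcal E_1 \oplus \mathcal E_2$ of the two minimal GNS-modules with the diagonal left action $\sigma_1 = \tau_1 \oplus \tau_2$, uses Proposition \ref{complemented} to split off the complemented submodules $x_i\mathcal B$, builds a right $\mathcal B$-linear \emph{swap} unitary $U$ with $U(x_1\oplus 0) = 0 \oplus x_2$, and defines the second action as the conjugate $\sigma_2 = U^*\sigma_1 U$; nothing beyond the GNS construction is needed, and the argument works verbatim for any unital range algebra $\mathcal B$. You instead route the proof through the full free product $\mathcal A * \mathcal A$ and Boca's theorem on free products of UCP maps, producing an element of $K(\phi_1,\phi_2)$ first and then reading off a joint representation tuple from its minimal GNS-module via the correspondence of Remark \ref{1-1 free}. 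This is valid (Boca's construction does land in the $C^*$-algebra generated by the images, so the general range $\mathcal B$ is not an obstruction), and it has the aesthetic advantage of producing a \emph{minimal} tuple directly; but it imports a nontrivial external theorem where none is needed, and it reverses the paper's logical order: the paper deduces $K(\phi_1,\phi_2)\neq\emptyset$ in Section 5 \emph{from} this proposition together with Theorem \ref{1-1}, whereas you deduce the proposition from $K(\phi_1,\phi_2)\neq\emptyset$. That is not circular, since Boca's theorem is independent of the paper, but it means your proof cannot simply be dropped in without also rewording the later remark. Your closing sketch of a self-contained free-product-of-modules construction correctly identifies where the real work would lie; the paper's swap-unitary trick is precisely the device that avoids having to do that work.
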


\begin{proof}
Let $(\mathcal E_i, x_i)$ be the minimal GNS construction for $\phi_i,$ with $\tau_i$ be the left action, $i=1,2.$  Take
$\mathcal E=\mathcal E_1\oplus\mathcal E_2.$ Then $\mathcal E$ is an
$\mathcal A-\mathcal B$ bi-module with left action $\sigma _1= \tau
_1\oplus \tau _2.$ By Proposition \ref{complemented}, we have
orthogonal decompositions, $\mathcal E _i= x_iB\oplus \mathcal E ^0
_i$, for $i=1,2.$ Hence $\mathcal E= x_1B\oplus \mathcal E ^0
_1\oplus x_2B\oplus \mathcal E ^0 _2.$ Define $U:\mathcal E\to
\mathcal E$, by
$$U(x_1b_1\oplus y_1\oplus x_2b_2\oplus y_2)= x_1b_2\oplus y_1\oplus x_2b_1\oplus y_2 ~~~\forall b_1,b_2\in \mathcal B, y_i\in \mathcal E ^0 _i, i=1,2.$$
It is easily seen that $U$ is a right $\mathcal B$-linear unitary and $U(x_1\oplus 0 )=  0\oplus x_2$ in $\mathcal E =\mathcal E _1\oplus \mathcal E _2.$
 Define $\sigma_2: \mathcal A\rightarrow \mathcal B^a(\mathcal E) $ by $\sigma_2(a)=U^*\sigma_1(a)U.$  Note that $\phi_1(a)=\langle x_1, \sigma_1(a)x_1\rangle $ and $\phi_2(a)=\langle x_1,\sigma_2(a)x_1\rangle.$ It follows that $(\mathcal E, x_1\oplus 0 , \sigma_1,\sigma_2)$ is a joint representation tuple for $\phi_1,\phi_2.$
\end{proof}

The following remark is important for the proof of next theorem.

\begin{remark}
 Suppose $\mathcal B\subseteq B(G)$ is a von Neumann algebra
and suppose $\mathcal E$ is a Hilbert $C^*$ right $\mathcal
B$-module. Now considering the SOT closure  $\overline{\mathcal
E}^s$ of $\mathcal E\subseteq \mathbb B (G,H)$ we get a von Neumann
$ \mathcal B$-module. This won't effect the left actions of
$\mathcal A$, as they are unital $*$-representations taking values
in $\mathbb B(H).$ So if $\mathcal E$ is a Hilbert $\mathcal
A-\mathcal B$ bi-module then $\overline {\mathcal E}^s$ becomes a
von Neumann $\mathcal A-\mathcal B$ bi-module.  Then it follows
easily that
$$ \gamma(\phi_1,\phi_2)= \inf_\mathcal E \gamma_\mathcal
E(\phi_1,\phi_2),$$ where the infimum is over all joint
representation modules $\mathcal E$ which are von Neumann $\mathcal
A-\mathcal B$ modules. It is to be noted that $\mathcal A$ can be a
general unital $C^*$-algebra and left actions are not assumed to be
normal. So we do not need $\phi_1,\phi_2$ to be normal maps.
\end{remark}

\begin{theorem}\label{metric}
Suppose $\mathcal B\subset \mathbb B(G)$ is a von Neumann algebra.
Then $\gamma$ is a metric on $\mbox{UCP}(\mathcal A, \mathcal B).$
\end{theorem}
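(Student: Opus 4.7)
The plan is to verify the four metric axioms. Non-negativity and symmetry are immediate from the definition; $\gamma(\phi,\phi)=0$ follows by taking $\sigma_1=\sigma_2$ equal to a single Stinespring representation of $\phi$. For the separation axiom, given any joint representation tuple $(\mathcal E,\sigma_1,\sigma_2,x)$ and $a\in \mathcal A$, one has $\|x\|=1$ and so
$$|\phi_1(a)-\phi_2(a)| = |\langle x,(\sigma_1(a)-\sigma_2(a))x\rangle| \leq \|a\|\,\|\sigma_1-\sigma_2\|_{cb};$$
taking the infimum gives $|\phi_1(a)-\phi_2(a)|\leq \|a\|\,\gamma(\phi_1,\phi_2)$, so $\gamma(\phi_1,\phi_2)=0$ forces $\phi_1=\phi_2$.

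The main work is the triangle inequality. Given $\phi_1,\phi_2,\phi_3\in \mbox{UCP}(\mathcal A,\mathcal B)$ and $\epsilon>0$, I would choose joint tuples $(\mathcal E,\sigma_1^E,\sigma_2^E,x)$ and $(\mathcal F,\sigma_2^F,\sigma_3^F,y)$ for $(\phi_1,\phi_2)$ and $(\phi_2,\phi_3)$ respectively, with cb-differences within $\epsilon$ of $\gamma(\phi_1,\phi_2)$ and $\gamma(\phi_2,\phi_3)$. By the preceding remark, $\mathcal E$ and $\mathcal F$ may be taken to be von Neumann $\mathcal B$-modules. Let $(\mathcal G,\sigma_2^G,z)$ be the minimal Stinespring module for $\phi_2$. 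By minimality there exist $\mathcal A$-$\mathcal B$-bilinear isometric embeddings $i:\mathcal G\hookrightarrow \mathcal E$ with $i(z)=x$ and $j:\mathcal G\hookrightarrow \mathcal F$ with $j(z)=y$. The crucial use of the von Neumann hypothesis is that $i(\mathcal G)$ and $j(\mathcal G)$, being strongly closed submodules, are complemented, yielding orthogonal Hilbert $\mathcal B$-module decompositions $\mathcal E=i(\mathcal G)\oplus \mathcal E'$ and $\mathcal F=j(\mathcal G)\oplus \mathcal F'$, with $\mathcal E'$ invariant under $\sigma_2^E$ and $\mathcal F'$ invariant under $\sigma_2^F$ (invariance follows since adjoints of invariant submodules for a $*$-representation have invariant orthogonal complements).

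Form the amalgamated module $\mathcal K:=\mathcal G\oplus \mathcal E'\oplus \mathcal F'$ with unit vector $w:=z\oplus 0\oplus 0$. It contains an isometric copy of $\mathcal E$ (the submodule $\mathcal G\oplus \mathcal E'\oplus 0$, via $i(g)+e'\mapsto g\oplus e'\oplus 0$) and of $\mathcal F$ (the submodule $\mathcal G\oplus 0\oplus \mathcal F'$, via $j(g)+f'\mapsto g\oplus 0\oplus f'$), each identifying the distinguished unit vector with $w$. Define unital $*$-representations on $\mathcal K$ by letting $\pi_1$ act as $\sigma_1^E$ on the copy of $\mathcal E$ and as $\sigma_2^F|_{\mathcal F'}$ on $\mathcal F'$; $\pi_2$ act as $\sigma_2^E$ on $\mathcal E$ and as $\sigma_2^F|_{\mathcal F'}$ on $\mathcal F'$; and $\pi_3$ act as $\sigma_2^E|_{\mathcal E'}$ on $\mathcal E'$ and as $\sigma_3^F$ on the copy of $\mathcal F$. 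The two natural descriptions of $\pi_2$ agree because $i$ and $j$ intertwine $\sigma_2^G$ with $\sigma_2^E|_{i(\mathcal G)}$ and $\sigma_2^F|_{j(\mathcal G)}$ respectively. Each $(\mathcal K,\pi_i,\pi_j,w)$ is then in $J(\phi_i,\phi_j)$, and by construction $\pi_1-\pi_2$ is supported on the $\mathcal E$-copy where it equals $\sigma_1^E-\sigma_2^E$, while $\pi_2-\pi_3$ is supported on the $\mathcal F$-copy where it equals $\sigma_2^F-\sigma_3^F$. Hence $\|\pi_1-\pi_2\|_{cb}=\|\sigma_1^E-\sigma_2^E\|_{cb}$ and $\|\pi_2-\pi_3\|_{cb}=\|\sigma_2^F-\sigma_3^F\|_{cb}$, so triangularity of the cb-norm gives
$$\gamma(\phi_1,\phi_3)\leq \|\pi_1-\pi_3\|_{cb}\leq \|\sigma_1^E-\sigma_2^E\|_{cb}+\|\sigma_2^F-\sigma_3^F\|_{cb},$$
and letting $\epsilon\to 0$ completes the proof. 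The principal obstacle is the amalgamation construction, which hinges on complementability of the common Stinespring submodule---this is precisely where the von Neumann hypothesis on $\mathcal B$ enters.
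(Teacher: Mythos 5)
Your proof is correct and is essentially the paper's own argument: the paper also reduces separation to $\gamma\geq\|\phi_1-\phi_2\|_{cb}$ and proves the triangle inequality by amalgamating the two joint modules over the (complemented, because strongly closed in a von Neumann module) minimal Stinespring submodule for $\phi_2$, extending the actions by $\sigma_2$ on the orthogonal complements. The only cosmetic difference is that the paper realizes your single module $\mathcal K=\mathcal G\oplus\mathcal E'\oplus\mathcal F'$ as two enlarged modules identified by an extended bilinear unitary $W'$, rather than writing the three-summand amalgam directly.
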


\begin{proof}

It is evident that $\gamma(\phi,\phi)=0$ and
$\gamma(\phi_1,\phi_2)=\gamma(\phi_2,\phi_1).$ Note that given any
joint representing module $\mathcal E,$ $\|
\sigma_1-\sigma_2\|^{\mathcal A \rightarrow \textbf B^a(\mathcal
E)}_{cb} \geq \|\phi_1-\phi_2\|_{cb},$ which shows that
$\gamma(\phi_1,\phi_2)\geq \|\phi_1-\phi_2\|_{cb}.$ So
$\gamma(\phi_1,\phi_2)=0$ implies $\phi_1=\phi_2.$ It remains to
show the triangle inequality. Suppose $\phi_1,\phi_2,\phi_3 \in
\mbox{UCP}(\mathcal A, \mathcal B).$ Let $\epsilon > 0$ be given, find von
 Neumann modules $(\mathcal E_1,\sigma_1,\sigma_2, x_1) \in J(\phi_1,\phi_2)$ and
$(\mathcal E_2, \sigma^\prime_2,\sigma^\prime_3, x_2)\in
J(\phi_2,\phi_3)$ such that $\|\sigma_1-\sigma_2\|_{cb} <
\gamma(\phi_1,\phi_2)+\frac{\epsilon}{2}$ and
$\|\sigma^\prime_2-\sigma^\prime_3\|_{cb} < \gamma(\phi_2,\phi_3)+
\frac{\epsilon}{2}.$ There exists bilinear unitary $W$
from the Hilbert $C^*$ sub-module $\mathcal A_{\sigma_2}x_1\mathcal B$
to $\mathcal A_{\sigma^\prime_2}x_2\mathcal B$ given by
$W(\sigma_2(a)x_1b)=\sigma^\prime_2(a)x_2b.$ Note that $\langle Wx,Wy\rangle=\langle x,y\rangle$ for every
 $x,y\in \mathcal A_{\sigma_2}x_1\mathcal B.$ Therefore $ \langle W(x)\odot g, W(y)\odot g^\prime\rangle =
 \langle x\odot g,y\odot g^\prime\rangle $ for every $x,y\in \mathcal A_{\sigma_2}x_1\mathcal B$ and $g,g^\prime\in G.$
  This implies that $W$ extends to a map from the von Neumann module $\overline{\mathcal A_{\sigma_2}x_1\mathcal B}^s$
to $\overline{\mathcal A_{\sigma^\prime_2}x_2\mathcal B}^s.$ Let us
call this extension again by $W.$ Note that $\overline{\mathcal
A_{\sigma_2}x_1\mathcal B}^s$ and $\overline{\mathcal
A_{\sigma^\prime_2}x_2\mathcal B}^s$ are $\mathcal B$ modules as
$\mathcal B$ is
 strong operator closed in $\mathbb B(G).$ Define right $\mathcal
B$ modules: $$\mathcal E=\mathcal E_1 \oplus (\overline{\mathcal
A_{\sigma^\prime_2} x_2\mathcal B}^s)^{\bot} = (\overline{\mathcal
A_{\sigma_2}x_1\mathcal B}^s)^{\bot} \oplus \overline{\mathcal
A_{\sigma_2}x_1\mathcal B}^s \oplus (\overline{\mathcal A_{\sigma^\prime_2}
x_2\mathcal B}^s)^{\bot},$$ $$ \mathcal E^\prime = \overline{(\mathcal
A_{\sigma_2}x_1\mathcal B}^s)^{\bot} \oplus \mathcal E_2 = (\overline{\mathcal
A_{\sigma_2}x_1\mathcal B}^s)^{\bot} \oplus \overline{\mathcal
A_{\sigma^\prime_2} x_2\mathcal B}^s \oplus (\overline{\mathcal
A_{\sigma^\prime_2} x_2\mathcal B}^s)^{\bot}.$$ In these modules, with
natural identifications, we have $\tilde {x}_1:=x_1\oplus 0=0\oplus
x_1\oplus 0$ and $\tilde {x}_2=0\oplus x_2=0\oplus x_2\oplus 0.$
Consider left actions defined as follows :  $$ \tilde{\sigma}_1 :=
\sigma_1\oplus \sigma^\prime_2,~ \tilde{\sigma}_2:=
\sigma_2\oplus\sigma^\prime_2 ~\mbox{acting ~on} ~ \mathcal E,$$
$$ \tilde{\tilde{\sigma}}_3 := \sigma_2 \oplus \sigma^\prime_3, ~ \tilde{\tilde{\sigma}}_2 := \sigma_2 \oplus \sigma^\prime_2 ~ \mbox{acting~on}~\mathcal E^\prime. $$ The unitary $W$ extends to an adjointable (right $B$ linear) unitary map $W^\prime: \mathcal E\rightarrow \mathcal E^\prime $ by defining
$W'=I\oplus W\oplus I.$ Observe that $\tilde{\sigma}_2(\cdot)=
W^{\prime *}\tilde{\tilde{\sigma}}_2(\cdot)W^\prime.$ Consider  left
actions  $\tilde{\sigma}_1(\cdot)$ and $\hat {\sigma }_3(\cdot
):=W^{\prime *}\tilde{\tilde{\sigma_3}}(\cdot)W^\prime$ on $\mathcal
E$ together with $x_1\in \mathcal E.$ Note that $$\langle \tilde {x}
_1, \tilde{\sigma_1}(a)\tilde {x} _1 \rangle = \langle x_1,
\sigma_1(a)x_1 \rangle = \phi_1(a) ,$$ and
\begin{eqnarray*} \langle \tilde {x} _1, \hat {\sigma }_3(a)\tilde {x} _1 \rangle &=& \langle \tilde {x}_1, W^{\prime *}\tilde{\tilde{\sigma}}_3(a)W^\prime \tilde {x} _1 \rangle \\
&=& \langle \tilde {x}_2, \tilde{\tilde{\sigma}}_3(a)\tilde {x} _2 \rangle \\ &=& \langle x_2, \sigma^\prime_3(a)x_2 \rangle \\ &=& \phi_3(a).  \end{eqnarray*} This shows that $(\mathcal E,  \tilde{\sigma}_1, \hat {\sigma } _3, \tilde {x}_1)$ is a joint representation tuple for $\phi_1,\phi_3.$ Note also $\|\sigma_1-\sigma_2\|_{cb}=\|\tilde{\sigma}_1-\tilde{\sigma}_2\|_{cb}$ and $\| \sigma^\prime_2-\sigma^\prime_3\|_{cb}=\|\tilde{\tilde{\sigma
}}_2-\tilde{\tilde{\sigma}}_3\|_{cb}.$
Now \begin{eqnarray*} \| \tilde {\sigma }_1- \hat {\sigma }_3\| _{cb} &= & \| \tilde{\sigma}_1 - W^{\prime *}\tilde{\tilde{\sigma}}_3W^\prime \|_{cb}\\
 &\leq & \| \tilde{\sigma}_1 - \tilde{\sigma}_2 \|_{cb} + \|\tilde{\sigma}_2-  W^{\prime *}\tilde{\tilde{\sigma}}_3W^\prime  \|_{cb} \\ &=& \|\sigma_1-\sigma_2 \|_{cb}
 + \|  W^{\prime }{\tilde{\sigma}}_2W^{\prime *}-\tilde{\tilde{\sigma}}_3\|_{cb} \\
  &=& \|\sigma_1-\sigma_2 \|_{cb} + \| \tilde{\tilde{\sigma}}_2-\tilde{\tilde{\sigma}}_3\|_{cb} \\ &=& \|\sigma_1-\sigma_2 \|_{cb} + \|\sigma^\prime_2-\sigma^\prime_3\|_{cb} \\ & < & \gamma(\phi_1,\phi_2) + \gamma(\phi_2,\phi_3) + \epsilon. \end{eqnarray*} As $\epsilon >0$ is arbitrary, we get  $\gamma(\phi_1,\phi_3)\leq \gamma(\phi_1,\phi_2)+\gamma(\phi_2,\phi_3).$

\end{proof}
In the next section we will show that $\gamma $ is a metric whenever
the range $C^*$-algebra is injective. The following proposition says
that representation metric is stable under taking ampliations.

\begin{proposition}\label{amplifiation}
Let $\mathcal A$ be a unital $C^*$-algebra and $\mathcal B$ be a von
Neumann algebra. Let $\phi,\psi \in \mbox{UCP}(\mathcal A,\mathcal
B).$ Then $$\gamma(\phi,\psi)=\gamma(\phi^{(n)},\psi^{(n)})$$ where
$\phi^{(n)},\psi^{(n)}:M_n(\mathcal A)\rightarrow M_n(\mathcal B)$ are ampliations of $\phi$ and $\psi$ respectively for
$n\geq 1.$
\end{proposition}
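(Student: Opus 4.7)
The plan is to prove both inequalities $\gamma(\phi^{(n)},\psi^{(n)}) \leq \gamma(\phi,\psi)$ and $\gamma(\phi,\psi) \leq \gamma(\phi^{(n)},\psi^{(n)})$. For the first (easier) direction, I would take any joint representation tuple $(\mathcal{E},\sigma_1,\sigma_2,x)$ for $(\phi,\psi)$ and ampliate: consider the right Hilbert $M_n(\mathcal{B})$-module $M_n(\mathcal{E})$ with its standard entrywise $M_n(\mathcal{B})$-valued inner product, the ampliated left actions $\sigma_k^{(n)}:M_n(\mathcal{A})\to \mathcal{B}^a(M_n(\mathcal{E}))$, and the diagonal vector $x^{(n)}:=\mathrm{diag}(x,\dots,x)$. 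A direct computation gives $\langle x^{(n)},x^{(n)}\rangle = I_n$ and $\langle x^{(n)},\sigma_k^{(n)}(A)x^{(n)}\rangle = \phi^{(n)}(A)$ (resp.\ $\psi^{(n)}(A)$), so this is a joint representation tuple for $(\phi^{(n)},\psi^{(n)})$. The standard identity $\|T^{(n)}\|_{cb}=\|T\|_{cb}$ gives $\|\sigma_1^{(n)}-\sigma_2^{(n)}\|_{cb} = \|\sigma_1-\sigma_2\|_{cb}$; the infimum then yields $\gamma(\phi^{(n)},\psi^{(n)})\leq \gamma(\phi,\psi)$.

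For the reverse, given a joint representation tuple $(\mathcal{F},\tilde\sigma_1,\tilde\sigma_2,y)$ for $(\phi^{(n)},\psi^{(n)})$, I would compress to the $(1,1)$-corner. Writing $\iota:\mathcal{A}\to M_n(\mathcal{A})$ for the unital $*$-homomorphism $a\mapsto \mathrm{diag}(a,\dots,a)$ and $e:=e_{11}\otimes 1_{\mathcal{B}}\in M_n(\mathcal{B})$, set $\mathcal{F}_0:=\mathcal{F}\cdot e$. Because left and right actions on the bimodule $\mathcal{F}$ commute, each $\tilde\sigma_k(\iota(a))$ preserves $\mathcal{F}_0$, and $\tau_k(a):=\tilde\sigma_k(\iota(a))|_{\mathcal{F}_0}$ defines a unital $*$-representation $\mathcal{A}\to \mathcal{B}^a(\mathcal{F}_0)$, where $\mathcal{F}_0$ carries the natural Hilbert module structure over $eM_n(\mathcal{B})e\cong\mathcal{B}$. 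With $y_0:=y\cdot e$, the computations $\langle y_0,y_0\rangle = e\langle y,y\rangle e = e$ and $\langle y_0,\tau_k(a)y_0\rangle = e\,\phi^{(n)}(\iota(a))\,e = e\,\iota(\phi(a))\,e$ correspond, under the identification $eM_n(\mathcal{B})e\cong\mathcal{B}$, to $1_\mathcal{B}$ and $\phi(a)$ (resp.\ $\psi(a)$ for $k=2$), confirming that $(\mathcal{F}_0,\tau_1,\tau_2,y_0)$ is a joint representation tuple for $(\phi,\psi)$. Since $\iota$ is a unital $*$-homomorphism (hence a complete isometry) and restriction to an invariant submodule is a complete contraction,
\[
\|\tau_1-\tau_2\|_{cb}\leq \|(\tilde\sigma_1-\tilde\sigma_2)\circ \iota\|_{cb}\leq \|\tilde\sigma_1-\tilde\sigma_2\|_{cb},
\]
and taking infimum gives $\gamma(\phi,\psi)\leq \gamma(\phi^{(n)},\psi^{(n)})$.

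The main point to watch is in the reverse direction: one has to carefully track the identifications to confirm that the compressed tuple $(\mathcal{F}_0,\tau_1,\tau_2,y_0)$ really is a joint representation tuple for $(\phi,\psi)$ in the sense of the paper, in particular that $y_0$ becomes a \emph{unit} vector under the $\mathcal{B}$-valued inner product inherited from the corner $eM_n(\mathcal{B})e$. The cb-norm estimate is then routine given the standard behaviour of cb-norm under ampliation and restriction.
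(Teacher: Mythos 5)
Your proof is correct and follows essentially the same route as the paper's: ampliate a joint representation tuple to $M_n(\mathcal E)$ with the diagonal vector for one inequality, and compress to a corner for the other. The only (cosmetic, arguably cleaner) difference is in the reverse direction, where you cut only on the right to $\mathcal F\cdot(e_{11}\otimes 1_{\mathcal B})$ and use the unital diagonal embedding $a\mapsto\mathrm{diag}(a,\dots,a)$ for the left actions, whereas the paper works with the two-sided corner $e_{11}\mathcal F f_{11}$ and the non-unital embedding $a\mapsto ae_{11}$; your variant avoids having to decide which of the two left actions defines the left compression.
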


\begin{proof}

Fix $n\geq 1.$ Suppose $(\mathcal E,\sigma_\phi,\sigma_\psi,x)$ is a joint representation tuple for $\phi$ and $\psi.$ Then $M_n(\mathcal E)$ is an $M_n(\mathcal A)-M_n(\mathcal B)$ bi-module. Denote $\textbf{x}=\mbox{diag}~(x,x,\cdots,x)\in M_n(\mathcal E).$ Let $(\sigma_\phi)^{(n)},(\sigma_\psi)^{(n)}$ be ampliations of $\sigma_\phi$ and $\sigma_\psi$ respectively. Then $$(M_n(\mathcal E), (\sigma_\phi)^{(n)},(\sigma_\psi)^{(n)},\textbf{x})$$ is a joint representation tuple for $\phi^{(n)}$ and $\psi^{(n)}.$ Note that $$\|(\sigma_\phi)^{(n)}-(\sigma_\psi)^{(n)}\|_{cb}=\|\sigma_\phi-\sigma_\psi\|_{cb}.$$ As $(\mathcal E,\sigma_\phi,\sigma_\psi,x)$ is an arbitrary joint representation tuple for $\phi$ and $\psi,$ we get $\gamma(\phi^{(n)},\psi^{(n)})\leq \gamma(\phi,\psi).$ Conversely suppose $(\mathcal F,\tau_1,\tau_2,y)$ is a joint representation tuple for $\phi^{(n)}$ and $\psi^{(n)}.$ Let $(e_{ij})$ and $(f_{ij})$ $1\leq i,j\leq n$ be the matrix units of $M_n(\mathcal A)$ and $M_n(\mathcal B)$ respectively. Then $\mathcal Ff_{11}$ is an $\mathcal A-\mathcal B$ bi-module with actions $\sigma_i(a)=\tau_i(ae_{11}),$ $i=1,2.$ Let $x=yf_{11}.$ Then $(\mathcal Ff_{11},\sigma_1,\sigma_2,x)$ is a joint representation tuple for $\phi$ and $\psi.$ Note that $$\|\sigma_1-\sigma_2\|_{cb} \leq \|\tau_1-\tau_2\|_{cb}.$$ As  $( \mathcal F,\tau_1,\tau_2,y )$ is an arbitrary joint representation tuple for $\phi^{(n)}$ and $\psi^{(n)},$ we get $\gamma(\phi,\psi)\leq \gamma(\phi^{(n)},\psi^{(n)}).$

\end{proof}

\section{Relation to free products}

Suppose $\mathcal C,\mathcal D$ are two unital $C^*$-algebras.
Denote by $\mathcal C\circ \mathcal D$ the unital $*$-algebra of all
finite linear combinations of all possible finite words consisting of
elements of $\mathcal C$ and $\mathcal D.$ Define a norm on this
 algebra by $$ \|c\| = \mbox{sup}~\{\|\pi(c)\|: \pi~\mbox{is~a} ~*\mbox{-representation~of}~\mathcal C\circ
\mathcal D~\mbox{on~some~Hilbert space}~H\}. $$ This is a $C^*$
norm. Completion of $\mathcal C\circ \mathcal D$ under this norm is
called the full free product of $\mathcal C$ and $\mathcal D$ and is
denoted by $\mathcal C*\mathcal D.$

 We have canonical injections $\rho_{\mathcal C}:\mathcal C\rightarrow \mathcal C*\mathcal D,$  $
\rho_{\mathcal D}:\mathcal D\rightarrow \mathcal C*\mathcal D$. This
way, $\mathcal C, \mathcal D$ are considered as sub-algebras of
$\mathcal C *\mathcal D.$ Any $*$-representation of $\mathcal C *
\mathcal D$ on a Hilbert space $H$  restricts to a  pair of
$*$-representations of $\mathcal C, \mathcal D$. Conversely any
pairs of $*$-representations of $\mathcal C$ and $\mathcal D$ on a
common Hilbert space $H$ can be extended to a representation of
$\mathcal C*\mathcal D.$ This follows from the universal property of
the full free product. Thus  there is a 1-1 correspondence between
the $*$-representations of $\mathcal C * \mathcal D$ and  pairs of
$*$-representations of $\mathcal C$ and $\mathcal D$ on a common
Hilbert space $H.$

Let $\mathcal A* \mathcal A$ be the free product of $\mathcal A$
with itself. Let $\rho_1, \rho_2$ be the canonical injections. Let
$\phi_1,\phi_2\in \mbox{UCP}(\mathcal A,\mathcal B).$ Denote by
$$K(\phi_1,\phi_2)=\{\phi\in \mbox{UCP}(\mathcal A*\mathcal A,
\mathcal B): \phi\circ\rho_1=\phi_1,\phi\circ\rho_2=\phi_2 \}.$$ A
map in $K(\phi _1, \phi _2)$ is like  a bivariate  distribution with
given marginals. This helps us to show that the metric $\gamma $ is
somewhat like the Wasserstein metric for probability measures.

\begin{remark}\label{1-1 free}
There is a 1-1 correspondence between the set of all Hilbert $C^*$
right $\mathcal B$  modules $\mathcal E$ with left actions
$\sigma_1,\sigma_2$ and the set of all $\mathcal A*\mathcal
A-\mathcal B$ bi-modules $(\mathcal E,\sigma).$ Indeed, for an
$\mathcal A*\mathcal A-\mathcal B$ bi-module $\mathcal E$ letting
$\sigma_i=\sigma\circ\rho_i,$ $i=1,2,$ we may endow $\mathcal E$
with two left actions $\sigma_1,\sigma_2.$ Conversely given a module
$(\mathcal E, \sigma_1,\sigma_2),$ the universal property of
$\mathcal A*\mathcal A$ provides $\sigma:\mathcal A*\mathcal
A\rightarrow \textbf B^a(\mathcal E)$ satisfying
$\sigma\circ\rho_i=\sigma_i,i=1,2.$ By virtue of the fact above,
every joint representation module $(\mathcal E,\sigma_1,\sigma_2,x)$
corresponds uniquely to an $\mathcal A*\mathcal A-\mathcal B$
bi-module $(\mathcal E,x).$ Also the joint representation module is
minimal if and only if $(\mathcal A*\mathcal A) x \mathcal
B=\mathcal E.$
\end{remark}

\begin{theorem}\label{1-1}
Let $\mathcal A, \mathcal B$ be unital $C^*$-algebras and let $\phi
_i\in \mbox{UCP}(\mathcal A, \mathcal B)$ for $i=1,2$. Then there is
a 1-1 correspondence  between the set of
 minimal joint representation modules of $(\phi_1,\phi_2)$ (modulo isomorphism)  and the set
of maps in  $K(\phi_1,\phi_2).$
\end{theorem}

\begin{proof}  Suppose $(\mathcal E,\sigma_1,\sigma_2,x)$ is a minimal joint representation module
for $\phi_1$ and $\phi_2.$ By Remark \ref{1-1 free}, we may consider
$(\mathcal E,x)$ an $(\mathcal A*\mathcal A)-\mathcal B$ bi-module
with left action $\sigma$ (say). We associate a completely positive
map $\Phi((\mathcal E,\sigma_1,\sigma_2,x)):=\phi\in
K(\phi_1,\phi_2)$ by $\phi(c)=\langle x,\sigma(c)x\rangle.$
Conversely every element in $K(\phi_1,\phi_2)$ under minimal GNS
construction gives rise to the minimal $\mathcal A*\mathcal
A-\mathcal B$ bi-module $\mathcal E$ and a unital vector $x\in
\mathcal E.$ So by Remark \ref{1-1 free}, we get a minimal joint
representation module of $\phi_1$ and $\phi_2.$ From the uniqueness
of minimal dilation, it follows that these two operations are
inverses of  each other. Indeed, given a representation module
$(\mathcal E,\sigma_1,\sigma_2,x),$ consider $\phi=\Phi((\mathcal
E,\sigma_1,\sigma_2,x)).$ Let $\mathcal E^\prime$ be its minimal
Stinespring bi-module. Define $\Psi:\mathcal E^\prime \rightarrow
\mathcal E$ by $\Psi(c\overline{\otimes} b)=cxb,$ $c\in \mathcal
A*\mathcal A,$ $b\in \mathcal B.$ By definition $\Psi$ is an
bi-module isometry and from the minimality, it follows that $\Psi$
is onto. Therefore $\Psi$ is an bi-linear unitary. Hence
$\Phi^{-1}(\Phi((\mathcal E,\sigma_1,\sigma_2))\simeq \mathcal E.$
The other part is trivial.

\end{proof}

We observe that Theorem \ref{1-1} gives us a different method of
computing $\gamma (\phi _1, \phi _2).$  In view of  Remark
\ref{remark Stinespring}, in the definition of $\gamma $ it  is
enough to consider  minimal joint representation modules.  Now by
the previous theorem it suffices to consider GNS modules of $\phi\in
K(\phi_1,\phi_2).$ Let $(\mathcal E,x)$ be the minimal GNS
construction of $\phi.$ i.e. $\phi(c)=\langle x,cx \rangle.$ Note
that $\textbf B^a(\mathcal E)$ is a $C^*$-algebra. Let
$\rho_1,\rho_2$ be canonical injections from $\mathcal A$ to
$\mathcal A*\mathcal A.$ Then the left action $\sigma$ of $\mathcal
E$ induces homomorphisms $\sigma_i=\sigma\circ\rho_i:\mathcal
A\rightarrow \textbf B^a(\mathcal E),i=1,2.$ Then the computation of
representation metric can be done as $$
\gamma(\phi_1,\phi_2)=\inf_{\phi\in
K(\phi_1,\phi_2)}\{\|\sigma_1-\sigma_2\|^\mathcal E_{cb}: (\mathcal
E,x)~\mbox{is~the~minimal~GNS~module~of}~\phi\}. $$

%\subsection{Some properties of Representation metric}

As an application of these ideas we get the following result.

\begin{proposition}\label{composition}
Let $\mathcal A,\mathcal B$ and $\mathcal C$ be unital $C^*$-algebras. Let
$\phi_1,\phi_2\in \mbox{UCP}(\mathcal A, \mathcal B)$ and
$\psi\in \mbox{UCP}(\mathcal B,\mathcal C).$ Then
$\gamma(\psi\circ\phi_1,\psi\circ \phi_2) \leq
\gamma(\phi_1,\phi_2).$
\end{proposition}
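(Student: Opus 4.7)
The plan is to exhibit, for every joint representation tuple of $(\phi_1,\phi_2)$, a joint representation tuple of $(\psi\circ\phi_1,\psi\circ\phi_2)$ whose cb-distance between left actions is no larger; then take infimum.

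Fix a joint representation tuple $(\mathcal{E},\sigma_1,\sigma_2,x)\in J(\phi_1,\phi_2)$, so $\mathcal{E}$ is a right $\mathcal{B}$-module, $\sigma_i:\mathcal{A}\to\mathcal{B}^a(\mathcal{E})$ are unital left actions, and $x$ is a unit vector with $\phi_i(a)=\langle x,\sigma_i(a)x\rangle$. Let $(\mathcal{F},y)$ be a (minimal) GNS $\mathcal{B}$-$\mathcal{C}$-module for $\psi$, so $y$ is a unit vector in $\mathcal{F}$ and $\psi(b)=\langle y,by\rangle$. Form the interior tensor product $\mathcal{G}:=\mathcal{E}\otimes_{\mathcal{B}}\mathcal{F}$, which is a right Hilbert $\mathcal{C}$-module with $\langle \xi\otimes\eta,\xi'\otimes\eta'\rangle=\langle\eta,\langle\xi,\xi'\rangle\eta'\rangle$. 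Each $T\in\mathcal{B}^a(\mathcal{E})$ induces an adjointable operator $T\otimes\mathrm{id}_{\mathcal{F}}\in\mathcal{B}^a(\mathcal{G})$, and the map $\iota:\mathcal{B}^a(\mathcal{E})\to\mathcal{B}^a(\mathcal{G})$, $T\mapsto T\otimes\mathrm{id}_{\mathcal{F}}$, is a unital $*$-homomorphism. Setting $\tilde\sigma_i:=\iota\circ\sigma_i$ and $\tilde x:=x\otimes y$, one checks $\langle\tilde x,\tilde x\rangle=\langle y,\langle x,x\rangle y\rangle=\langle y,y\rangle=1_{\mathcal{C}}$ and
\[
\langle\tilde x,\tilde\sigma_i(a)\tilde x\rangle=\langle y,\langle x,\sigma_i(a)x\rangle y\rangle=\langle y,\phi_i(a)y\rangle=\psi(\phi_i(a)),
\]
so $(\mathcal{G},\tilde\sigma_1,\tilde\sigma_2,\tilde x)\in J(\psi\circ\phi_1,\psi\circ\phi_2)$.

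For the cb-norm estimate, since $\iota$ is a unital $*$-homomorphism between unital $C^*$-algebras, it is completely contractive. Hence
\[
\|\tilde\sigma_1-\tilde\sigma_2\|_{cb}=\|\iota\circ(\sigma_1-\sigma_2)\|_{cb}\leq\|\sigma_1-\sigma_2\|_{cb}.
\]
Taking the infimum over all joint representation tuples of $(\phi_1,\phi_2)$ on the right yields $\gamma(\psi\circ\phi_1,\psi\circ\phi_2)\leq\gamma(\phi_1,\phi_2)$.

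No step looks genuinely hard. The only point that needs mild care is to justify that $\iota:T\mapsto T\otimes\mathrm{id}_{\mathcal{F}}$ is well-defined and a unital $*$-homomorphism on the interior tensor product (so automatically completely contractive); this is a standard fact in Hilbert $C^*$-module theory, and together with the compatibility of $\tilde x$ with the module structures does all the work.
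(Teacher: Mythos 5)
Your proof is correct, and the underlying mechanism is the same as the paper's: construct a joint representation tuple for $(\psi\circ\phi_1,\psi\circ\phi_2)$ whose left actions are the original ones composed with a unital $*$-homomorphism into the adjointable operators on a new module, then conclude from the complete contractivity of $*$-homomorphisms. The packaging differs. The paper works in the free-product picture: given $\phi\in K(\phi_1,\phi_2)$ with Stinespring module $(\mathcal E,x)$, it forms the auxiliary CP map $\tilde\psi(A)=\psi(\langle x,Ax\rangle)$ on $\mathcal B^a(\mathcal E)$ and uses the left action $\tilde\pi$ of its Stinespring $\mathcal B^a(\mathcal E)$--$\mathcal C$ module. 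You instead take the interior tensor product $\mathcal E\otimes_{\mathcal B}\mathcal F$ with the GNS module $(\mathcal F,y)$ of $\psi$ and use $\iota:T\mapsto T\otimes\mathrm{id}_{\mathcal F}$. These produce essentially the same object: the map $A\otimes c\mapsto Ax\otimes yc$ identifies the minimal Stinespring module of $\tilde\psi$ isometrically with a submodule of $\mathcal E\otimes_{\mathcal B}\mathcal F$, so the two arguments are interchangeable. Your version avoids introducing $\tilde\psi$ and the free-product reformulation, at the (standard, and correctly flagged) cost of citing that $T\mapsto T\otimes\mathrm{id}_{\mathcal F}$ is a well-defined unital $*$-homomorphism on the interior tensor product.
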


\begin{proof}

Given $\phi\in K(\phi_1,\phi_2),$ observe  that $\psi\circ\phi\in
K(\psi\circ\phi_1,\psi\circ\phi_2).$ Let ($\mathcal E_\phi,x)$ be a
$\mathcal A*\mathcal A-\mathcal B$ bi-module. Let
$\rho_1,\rho_2:\mathcal A \rightarrow \textbf B^a(\mathcal E_\phi)$
be the canonical maps so that $\langle x,\rho_i(\cdot )x\rangle =
\phi_i (\cdot ),$ $i=1,2.$ Consider the completely positive map
$\tilde{\psi}:\textbf B^a(\mathcal E)\rightarrow \mathcal C$ defined
by $\tilde{\psi}(A)=\psi(\langle x, Ax\rangle),$ $A\in \textbf
B^a(\mathcal E ).$ Let $(\mathcal E_{\tilde{\psi}},y)$ be a $\textbf
B^a(\mathcal E)-\mathcal C$ GNS bi-module for $\tilde{\psi}.$ Denote
by $\tilde{\pi}$ be its corresponding left action. We have $ \langle
y, \tilde{\pi}(\rho_i(a))y\rangle = \psi\circ \phi_i,$ $i=1,2.$ We
get $(\mathcal E_{\tilde{\psi}},y, \tilde{\pi}\circ
\rho_1,\tilde{\pi}\circ \rho_2)$ is a joint representation tuple for
$(\psi\circ \phi_1,\psi\circ \phi_2).$ Therefore
$\gamma(\psi\circ\phi_1,\psi\circ\phi_2)\leq \|\tilde{\pi}\circ
\rho_1-\tilde{\pi}\circ \rho_2\|_{cb} \leq
\|\rho_1-\rho_2\|^{\mathcal E_\phi}_{cb}.$ Taking infimum over
$\phi\in K(\phi_1,\phi_2),$ the result follows.

\end{proof}

\begin{theorem}\label{injective}
Let $\mathcal A$ be a $C^*$-algebra and let $\mathcal B\subseteq
\mathbb B(G)$ be an injective $C^*$-algebra. Let $\iota: \mathcal
B\rightarrow \mathbb B(G)$ be the inclusion map. Then for any two
UCP maps $\phi _i: \mathcal A\to \mathcal B$, taking
$\tilde{\phi}_i:=\iota\circ\phi_i,$ $i=1,2,$:
$$ \gamma(\tilde{\phi}_1,\tilde{\phi}_2) = \gamma(\phi_1,\phi_2). $$
Consequently, $\gamma $ is a metric on UCP($\mathcal A, \mathcal
B).$
\end{theorem}

\begin{proof}
As $\mathcal B$ is injective, there exists a conditional expectation
map  $\Phi:\mathbb B(G)\rightarrow \mathcal B$ (this means that,
$\Phi $ is UCP and $\Phi (b)=b$ for $b\in \mathbb B$). Note that in
particular, $\phi_i=\Phi\circ\tilde{\phi_i},$ $i=1,2.$ Now from
Proposition \ref{composition}, we get
\begin{eqnarray*}
 \gamma(\phi_1,\phi_2) &=& \gamma(\Phi\circ\tilde{\phi}_1,\Phi\circ\tilde{\phi}_2) \\
 &\leq& \gamma(\tilde{\phi}_1,\tilde{\phi}_2)\\ &=& \gamma(\iota\circ\phi_1,\iota\circ\phi_2)\\ &\leq& \gamma(\phi_1,\phi_2).
  \end{eqnarray*}
  Now the second part follows from Theorem \ref{metric}.
\end{proof}

\section{Attainability of the metric}

The representation metric is defined as an infimum of completely
bounded norm of differences of a class of $*$-homomorphisms. It is a
natural question as to whether the infimum is actually attained at
some pair. In this section, we will address this issue. Suppose
$\phi_1,\phi_2\in \mbox{UCP}(\mathcal A,\mathcal B)$ where $\mathcal
A$ is a unital $C^*$-algebra and $\mathcal B$ is a von Neumann
algebra. Suppose $\mathcal B\subseteq \mathbb B(G)$ for some Hilbert
space $G.$ Recall the definition of $K(\phi _1, \phi _2)$ from the
previous Section. From Proposition \ref{non-empty}, Theorem
\ref{1-1}, we see that
%CP(\phi _1, \phi _2) changed to J(\phi _1, \phi_2)
$K(\phi_1,\phi_2)$ is non-empty. Our first observation is that the space $K(\phi _1, \phi _2)$ is compact under suitable topology.

Let $\mathcal C$ be a $C^*$-algebra. Fix
$r> 0.$ Let  us recall BW (bounded weak) topology on
$\mbox{CP}_r(\mathcal{C}, \mathbb{B}(G))=\{ \phi \in \mbox{CP}(\mathcal C, \mathbb B(G)) : ~~\|\phi \| \leq r\}.$  A net $\phi
_\alpha \to \phi $ in BW topology if  for every $c\in \mathcal{C},
\xi, \mu \in G$ $\langle \xi , (\phi _{\alpha }(c)-\phi
(c))\mu \rangle \to 0.$ It is to be noted that $\mbox{CP}_r(C,
\mathbb{B}(G))$ is compact with respect to BW topology. As
$\mathcal{B}$ is a von Neumann algebra, it follows that $\mbox{CP}_r(C,
\mathcal{B})=\{ \phi\in \mbox{CP}(\mathcal C,\mathcal B) : \|\phi \| \leq r\}$ is a closed subset of $\mbox{CP}_r(C, \mathbb{B}(G))$
in BW topology and hence compact. Consequently $K(\phi _1, \phi _2)$ being closed subset of $\mbox{CP}_r(\mathcal A*\mathcal A,
\mathcal{B})$ is BW-compact.

 Consider $\phi \in K(\phi _1, \phi _2).$  Let $(\mathcal E,x)$ be its minimal GNS construction. Then  $\mathcal E$ is an
$\mathcal A*\mathcal A-\mathcal B$ bi-module and  also it is a von
Neumann right $B$ module. Note that $G$ is a $\mathcal B-\mathbb C$
bi-module. Consider the internal tensor product  $H=\mathcal E\odot
G.$ Note that $H$ is a Hilbert space and $\textbf B^a(\mathcal E)$
is a von Neumann sub-algebra of $\mathbb B(H).$ Let $\sigma:\mathcal
A*\mathcal A \rightarrow \textbf B^a(\mathcal E)$ be the unital left
action and let $\rho_1,\rho_2:\mathcal A\rightarrow \mathcal
A*\mathcal A$ be the canonical injections. Suppose
$\sigma_i=\sigma\circ\rho_i,$ $i=1,2.$  For notational simplicity we
are suppressing the dependence of $\sigma _1, \sigma _2$ on $\phi $.
However, we will denote the completely bounded norm of $\sigma
_1-\sigma _2$, by
 $\| \sigma _1-\sigma _2\|^{\phi }_{cb}.$   Recall that
$$\gamma (\phi _1, \phi _2)=  \inf _{\phi\in K(\phi_1,\phi_2) } \| \sigma _1-\sigma _2\|^{\phi } _{cb}.$$ Hence we need to study the behaviour of the map
$\phi \mapsto \| \sigma _1-\sigma _2\|^{\phi } _{cb}$ under BW topology. As $\sigma _1, \sigma _2$ are $*$-homomorphisms, $\|\sigma _1-\sigma _2\| ^{\phi } _{cb}\leq 2$. From the definition of norm,
 \begin{eqnarray*} && \|\sigma_1-\sigma_2\| \\ &=& \sup_{\|a\|\leq 1,~ a\in \mathcal A}~\|(\sigma_1(a)-\sigma_2(a))^*(\sigma_1(a)-\sigma_2(a))\|^{\frac{1}{2}} \\
%&=& \sup_{~a\in \mathcal A, \|a\|\leq 1}~ \|\sigma_1(a^*a)+\sigma_2(a^*a)-2 \mbox {Re }(\sigma_1(a^*)\sigma_2(a))\|^{\frac{1}{2}}  \\
 &=& \sup_{a\in \mathcal A, \|a\|\leq 1  } ~~ \sup_{\eta \in \mathcal E \circ G, \|\eta\| \leq 1} [\langle \eta, [\sigma_1(a^*a)+
 \sigma _2(a^*a)-2\mbox {Re} (\sigma _1(a^*)\sigma _2(a))]\eta \rangle]^{\frac{1}{2}}.  \end{eqnarray*}

By minimality of the Stinespring dilation,  $H=\mathcal{E}\odot G =
\overline{\mbox{span}}\{ \sigma (c)xb\odot g: c\in \mathcal{C}, b\in
\mathcal{B}, g\in G\}.$ Hence the collection of vectors of the form
$\eta = \sum _{i=1}^k \sigma (c_i)xb_i\odot g_i$ is dense in $H.$
Now,

 $$ \|\eta\|^2= \sum_{i,j} \langle b_jg_j, \phi(c^*_jc_i)b_ig_i \rangle $$
 and
 \begin{eqnarray*} && \langle \eta, [\sigma_1(a^*a)+\sigma_2(a^*a)-2 \mbox{Re}(\sigma_1(a^*)\sigma_2(a))]\eta \rangle \\   &=&  \sum_{i,j} \langle (b_jg_j), \phi(c^*_j(\rho _1(a^*a)+\rho _2(a^*a)
 -2~\mbox{Re}(\rho _1(a^*)\rho _2(a))c_i)(b_ig_i) \rangle \end{eqnarray*}

  Denote by $$\tilde{c}=(c_1,c_2,\cdots, c_k), ~ \tilde{b}=(b_1,b_2,\cdots,b_k), ~ \tilde{g}=(g_1,g_2,\cdots,g_k).$$ Define \begin{eqnarray*}  && f(\phi,k,a,\tilde{c},\tilde{b},\tilde{g}) \\ &=& \frac{[\sum^k_{i,j}\langle (b_jg_j), \phi(c^*_j(\rho_1(a^*a)+\rho _2(a^*a)
  -2~\mbox {Re}(\rho _1(a^*)\rho _2(a)))c_i)(b_ig_i) \rangle]^\frac{1}{2}}{[\sum^k_{i,j} \langle b_jg_j, \phi(c^*_jc_i)b_ig_i \rangle]^\frac{1}{2}}.\end{eqnarray*} Note that numerator vanishes if denominator vanishes, and in such a case this ratio is defined to be 0. Observe that $\phi \mapsto
   f(\phi,k,a,\tilde{c},\tilde{b},\tilde{g})$ is continuous in BW topology, when other variables are kept fixed.
   Also note that $f(k,a,\tilde{c},\tilde{b},\tilde{g})$ is bounded by $2\|a\|.$ Therefore $$ \|\sigma_1-\sigma_2\|^{\phi} = \sup_{k\in {\mathbb N},\|a\| \leq 1,\tilde{c},\tilde{b},\tilde{g}} ~f(\phi,k,a,\tilde{c},\tilde{b},\tilde{g}).$$

  In order to compute the completely bounded norm of $\sigma _1-\sigma _2$,  we need to consider, $M_n(\mathcal A),$ $\hat{\eta}=(\eta_1,\eta_2,\cdots,\eta_n)\in H \oplus \cdots \oplus H$ ($n$ times) and $\phi$ to be replaced by $ \phi ^{(n)}$ (ampliation of $\phi$). It follows that $$ \|\sigma_1-\sigma_2\|^{\mathcal E_\phi}_{cb} = \sup_{\|(a_{ij})\| \leq 1, n, k, \tilde{c}_i,\tilde{b}_i,\tilde{g}_i,1\leq i\leq n}~ F(\phi,n,k, (a_{ij}),(\tilde{c}_i),(\tilde{b}_i),(\tilde{g}_i))$$ where $\tilde{c}_i=(c_{i1},c_{i2},\cdots, c_{ik}),$ $\tilde{b}_i=(b_{i1},b_{i2},\cdots,b_{ik}),$ $\tilde{g}_i=(g_{i1},g_{i2},\cdots,g_{ik}).$ Then $\eta_i=\sum^k_{j=1} c_{ij}x b_{ij}\circ g_{ij},$ $i=1,2,\cdots,n$ and \begin{eqnarray*} F(\phi,n,k,(a_{ij}),(\tilde{c}_i),(\tilde{b}_i),(\tilde{g}_i)) &=& \frac{[\sum^n_{i=1,j=1,l=1}\sum^k_{r=1,r^\prime=1} A_{ijlrr^\prime}]^\frac{1}{2}}{[\sum^n_{i=1}\sum^k_{r=1,r^\prime=1}\langle b_{ir}g_{ir}, \phi(c^*_{ir}c_{ir^\prime})b_{ir^\prime}g_{ir^\prime}\rangle]^\frac{1}{2}}, \\ \end{eqnarray*} where $$A_{ijlrr^\prime}=
  \langle b_{ir}g_{ir},\phi(c^*_{ir}(\sigma_1(a^*_{il}a_{lj})+\sigma_2(a^*_{il}a_{lj})-2~\mbox {Re}\sigma_1(a^*_{il})\sigma_2(a_{lj}))c_{jr^\prime})b_{jr^\prime}g_{jr^\prime} \rangle. $$
 Once again it is easy to see  that  $\phi \rightarrow F(\phi,n,k, (a_{ij}),(\tilde{c}_i),(\tilde{b}_i),(\tilde{g}_i))$ is BW continuous when other quantities are kept fixed. Now we are ready to prove the following lemma.

%%%%%%%%%%%%%%%

\begin{lemma}

Suppose $\{\phi _{\alpha }\}$ is a net of completely
positive maps in $K(\phi _1, \phi _2)$ converging to a completely positive map $\phi
$ in BW topology. Then $$\liminf _{\alpha
}~\|\sigma_1-\sigma_2\|^{\phi _{\alpha }}_{cb}\geq
\|\sigma_1-\sigma_2\|^{\phi}_{cb}.$$

\end{lemma}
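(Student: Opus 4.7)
The plan is to exploit the explicit supremum representation of the completely bounded norm that the authors establish in the paragraph preceding the lemma. That representation expresses $\|\sigma_1-\sigma_2\|^{\phi}_{cb}$ as
$$\|\sigma_1-\sigma_2\|^{\phi}_{cb} \;=\; \sup F\bigl(\phi, n, k, (a_{ij}), (\tilde{c}_i), (\tilde{b}_i), (\tilde{g}_i)\bigr),$$
with the supremum ranging over all admissible tuples of parameters. Since the authors have noted that, with the parameters held fixed, the map $\phi \mapsto F(\phi, n, k, (a_{ij}), (\tilde{c}_i), (\tilde{b}_i), (\tilde{g}_i))$ is continuous in the BW topology, the right hand side is a pointwise supremum of BW-continuous functions of $\phi$, and the standard fact that such a supremum is lower semicontinuous yields exactly the desired inequality.

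To carry this out, I would fix an arbitrary admissible tuple $(n, k, (a_{ij}), (\tilde{c}_i), (\tilde{b}_i), (\tilde{g}_i))$. For each $\alpha$, the definition of supremum gives
$$\|\sigma_1-\sigma_2\|^{\phi_\alpha}_{cb} \;\geq\; F\bigl(\phi_\alpha, n, k, (a_{ij}), (\tilde{c}_i), (\tilde{b}_i), (\tilde{g}_i)\bigr).$$
Taking $\liminf_\alpha$ on both sides and using the BW continuity of $F$ in its first argument along the convergent net $\phi_\alpha \to \phi$, I obtain
$$\liminf_\alpha \|\sigma_1-\sigma_2\|^{\phi_\alpha}_{cb} \;\geq\; F\bigl(\phi, n, k, (a_{ij}), (\tilde{c}_i), (\tilde{b}_i), (\tilde{g}_i)\bigr).$$
The left side is independent of the chosen tuple, so I may take the supremum on the right over all such tuples, arriving at
$$\liminf_\alpha \|\sigma_1-\sigma_2\|^{\phi_\alpha}_{cb} \;\geq\; \|\sigma_1-\sigma_2\|^{\phi}_{cb},$$
which is exactly the assertion of the lemma.

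The main conceptual obstacle has already been absorbed into the setup preceding the lemma, namely the packaging of the cb-norm as a supremum over a parameter set disjoint from $\phi$, and the verification that the ratio $F$ is BW-continuous in $\phi$ despite being defined as a quotient (with the convention that $0/0 = 0$). Both of these have been handled by the authors, so the proof of the lemma itself amounts to little more than invoking lower semicontinuity of a pointwise supremum. The only minor care point is to confirm that the inequality $F \leq \|\sigma_1-\sigma_2\|^{\phi}_{cb}$ survives the $0/0$ convention, but this is automatic since in that case the relevant vector $\hat{\eta}$ is null in $H^{\oplus n}$ and contributes nothing to the supremum in either expression.
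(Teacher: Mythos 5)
Your argument is correct and is essentially the paper's own proof: the authors likewise express the cb-norm as a supremum of the BW-continuous functionals $F(\phi,\cdot)$ and then invoke the ``$\liminf\sup\geq\sup\lim$'' inequality, which is exactly the lower semicontinuity of a pointwise supremum that you spell out tuple by tuple. Your closing remark about the $0/0$ convention is a sensible extra check (in that degenerate case $F(\phi,\cdot)=0$ and the needed inequality is trivial), but it does not change the substance.
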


\begin{proof}
 The following simple observation is used: Let $f(a,b)$
be a real valued function on two variables $a,b.$ Then $\inf_a\sup_b f(a,b)
\geq \sup_b\inf_a f(a,b).$ Now
\begin{eqnarray*} && \liminf_{\alpha
}~\|\sigma_1-\sigma_2\|^{\phi_\alpha}_{cb} \\ &=& \liminf_{\alpha }~
\sup_{\|(a_{ij})\| \leq 1, n, k,
\tilde{c}_i,\tilde{b}_i,\tilde{g}_i,1\leq i\leq n}~ F(\phi_ \alpha ,n,k,
(a_{ij}),(\tilde{c}_i),(\tilde{b}_i),(\tilde{g}_i)) \\ &\geq &
\sup_{\|(a_{ij})\| \leq 1, n, k,
\tilde{c}_i,\tilde{b}_i,\tilde{g}_i,1\leq i\leq
n}~\lim_ \alpha ~F(\phi_ \alpha ,n,k,
(a_{ij}),(\tilde{c}_i),(\tilde{b}_i),(\tilde{g}_i)) \\ &=&
\sup_{\|(a_{ij})\| \leq 1, n, k,
\tilde{c}_i,\tilde{b}_i,\tilde{g}_i,1\leq i\leq n}~F(\phi,n,k,
(a_{ij}),(\tilde{c}_i),(\tilde{b}_i),(\tilde{g}_i)) \\ &=&
\|\sigma_1-\sigma_2\|^{\phi}_{cb}.  \end{eqnarray*}

\end{proof}

\begin{proposition}
 There is a $\phi \in K(\phi_1,\phi_2)$ for which the infimum is attained for $\gamma(\phi_1,\phi_2),$ that is,
  $$ \gamma(\phi_1,\phi_2) = \|\sigma_1-\sigma_2\|^{\phi}_{cb}. $$

 \begin{proof} This follows from the compactness of $K(\phi_1,\phi_2)$ in BW topology and the previous Lemma.
  The definition of $\gamma(\phi_1,\phi_2)$ will give a sequence of unital completely positive maps $\phi_n\in \mbox{UCP}(\phi_1,\phi_2)$ such that $\gamma(\phi_1,\phi_2)=\lim _n~\|\sigma_1-\sigma_2\|^{\phi_n} _{cb} .$ From compactness, we may find a subnet
   $\phi_{\alpha }$ converging to $\phi$ in BW topology. Note that $\gamma(\phi_1,\phi_2)=\lim~\|\sigma_1-\sigma_2\|^{\phi_\alpha } _{cb}.$ From the Lemma we get  $\lim~\|\sigma_1-\sigma_2\|^{\phi_{\alpha }} _{cb} \geq \|\sigma_1-\sigma_2\|^{\phi} _{cb}.$ This implies that $\gamma(\phi_1,\phi_2)=\|\sigma_1-\sigma_2\|^{\phi}_{cb}.$

\end{proof}
\end{proposition}

This result shows that $\gamma $ is attained when the range algebra
$\mathcal B$ is a von Neumann algebra. In view of Theorem
\ref{injective}, it holds good also for injective $C^*$-algebras. In
other words, we have the following result.

\begin{theorem}
Suppose $\mathcal A$ is a $C^*$-algebra and $\mathcal B$ is a von
Neumann algebra or an injective $C^*$-algebra.  Suppose $\phi_1,
\phi_2\in \mbox{UCP}(\mathcal A, \mathcal B).$ Then there is a joint
representation tuple $(\mathcal E, \sigma_1,\sigma_2,x)$ for
$\phi_1$ and $\phi_2$ such that
$$\gamma(\phi_1,\phi_2)=\gamma_\mathcal E(\phi_1,\phi_2)=\|\sigma_1-\sigma_2\|_{cb}.$$
\end{theorem}

\section{Relationship of representation metric with Bures metric}

Suppose $\mathcal A, \mathcal B$ are $C^*$-algebras, $\mathcal B$ is
injective and  suppose $\phi_1,\phi_2 \in \mbox{UCP} (\mathcal A,
\mathcal B)$. Then we wish to show
\begin{equation}
\beta ^2(\phi _1, \phi _2)= 2-\sqrt {4-\gamma ^2(\phi _1, \phi _2)
}.
\end{equation}
Here for notational convenience we write  $\beta ^2(\phi _1, \phi
_2)$ instead of $[\beta (\phi _1, \phi _2)]^2$, with similar
notation for $\gamma .$ First we prove the result for states, that
is, when $\mathcal B = \mathbb C.$
 Actually,  we first prove
(Theorem \ref{MainTheorem}  ):
$$ \gamma(\phi_1,\phi_2) = \beta(\phi_1,\phi_2)\sqrt{4-\beta^2(\phi_1,\phi_2)},$$
and then solve the associated quadratic equation to get  $$\beta ^2
(\phi_1,\phi_2) = 2\pm \sqrt{4-\gamma ^2(\phi_1,\phi_2)}$$ and
observe that only the negative sign is permissible, as $0\leq \beta
(\phi_1,\phi_2)\leq \sqrt{2}$ and $0\leq \gamma (\phi_1,\phi_2) \leq
2$ are trivially true for unital completely positive maps.
%\section{Upper and lower bounds of representation metric for states}

It is to be recalled that when we are dealing with states, the GNS
Hilbert $C^*$-modules under consideration  are just Hilbert spaces.
Suppose $(H, \pi , x_1, x_2)$ is a common representation for
$\phi_1$ and $\phi_2.$ We take $S(\pi , \phi)= \{ x: \phi (\cdot )=
\langle x, \pi (\cdot )x\rangle \}.$  To begin with we obtain some
lower and upper bounds of representation metric for {\bf states} on
$C^*$-algebras. In the following, $d(A,B)$ stands for the distance
between sets $A,B$ in relevant metric spaces.
 \begin{lemma}\label{Ad}

 Let $x,y$ be unit vectors in a Hilbert space $K$. For a unitary $U$ in $K$, denote by $Ad_U$ the automorphism $X\mapsto UXU^*$, on $\mathbb B(K)$. Then \begin{eqnarray*}
  \inf_{U:Ux=y}~\|id-Ad_U\|_{cb} &=& \inf_{U:Ux=y}~\|id-Ad_U\| \\ &=& 2
  \inf_{U:Ux=y}~ d(U,\mathbb C) \\ &=&  2 \sqrt{1-|\langle x,y\rangle |^2}. \end{eqnarray*} Moreover, the infimum is attained.

 \end{lemma}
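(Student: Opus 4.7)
I will show that all three infima coincide with $2\inf_{Ux=y} d(U,\mathbb{C}I)$, then evaluate this common value by a short geometric calculation and exhibit an explicit minimiser $U_0$.

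The first step is the pointwise identity
$$UXU^* - X \;=\; U(XV^* - V^*X), \qquad X \in \mathcal B(K),$$
valid for any unitary $U$ with $V := U - \lambda I$. It follows by expanding $UU^*=I$ to get $VV^* + \bar\lambda V + \lambda V^* = (1-|\lambda|^2)I$ and substituting. Because multiplication by the unitary $U$ is a complete isometry and the commutator-like bracket on the right is a sum of elementary operators of cb-norm $\|V\|$, we obtain $\|id - Ad_U\|_{cb} \leq 2\|U - \lambda I\|$, hence $\|id - Ad_U\|_{cb} \leq 2d(U,\mathbb{C}I)$ after infimising over $\lambda$. In the opposite direction, right multiplication by $U^*$ is an isometry, so $\|UXU^* - X\| = \|(UX-XU)U^*\| = \|\delta_U(X)\|$; thus $\|id - Ad_U\| = \|\delta_U\|$, and Stampfli's theorem gives $\|\delta_U\| = 2d(U,\mathbb{C}I)$. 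Combined with the trivial bound $\|\cdot\|_{cb} \geq \|\cdot\|$, this forces
$$\|id - Ad_U\|_{cb} \;=\; \|id - Ad_U\| \;=\; 2d(U,\mathbb{C}I)$$
for every unitary $U$, reducing the lemma to the computation of $\inf_{Ux=y} d(U,\mathbb{C}I)$.

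For the lower bound, any such $U$ and any $\lambda \in \mathbb{C}$ satisfy $\|U - \lambda I\| \geq \|(U - \lambda I)x\| = \|y - \lambda x\|$, and the scalar minimum of $\|y - \lambda x\|^2 = 1 - 2\,\mbox{Re}(\bar\lambda\langle x,y\rangle) + |\lambda|^2$, attained at $\lambda = \langle x, y\rangle$, equals $1 - |\langle x, y\rangle|^2$. Hence $d(U,\mathbb{C}I) \geq \sqrt{1 - |\langle x, y\rangle|^2}$. To attain this, set $a = \langle x, y\rangle$ and $s = \sqrt{1 - |a|^2}$. If $|a| = 1$ take $U_0 = aI$. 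Otherwise let $\xi = (y - ax)/s$, pick $\theta$ with $e^{i\theta} = a/|a|$ (or $\theta = 0$ if $a = 0$), define $U_0$ on $\mbox{span}\{x,\xi\}$ by the block $\left(\begin{smallmatrix} a & -se^{2i\theta} \\ s & a\end{smallmatrix}\right)$ in the ordered ONB $\{x,\xi\}$, and on the orthogonal complement by the scalar $e^{i\theta}I$. A direct check shows $U_0$ is unitary with $U_0 x = y$; the $2 \times 2$ block has eigenvalues $e^{i(\theta \pm \psi)}$ with $\cos\psi = |a|$, $\sin\psi = s$, each at distance $s$ from $a$; and the complement contributes $|e^{i\theta} - a| = 1 - |a| \leq s$, since $(1-|a|)^2 \leq 1 - |a|^2$. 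Hence $\|U_0 - aI\| = s$ and $d(U_0, \mathbb{C}I) = s$, matching the lower bound.

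The main non-trivial input is Stampfli's theorem $\|\delta_U\| = 2d(U,\mathbb{C}I)$; everything else is the algebraic identity above, a Cauchy--Schwarz style lower bound, and a two-dimensional model unitary whose one subtlety is the phase factor $e^{2i\theta}$ (included so that the Chebyshev centre of the block's spectrum is exactly $a$, not merely $|a|$, when $a$ is complex).
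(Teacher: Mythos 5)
Your proof is correct, and its backbone is the same as the paper's: Stampfli's formula $\|\delta_U\|=2d(U,\mathbb{C}I)$ is the key input, the lower bound comes from $\|U-\lambda I\|\geq\|(U-\lambda I)x\|=\|y-\lambda x\|\geq\sqrt{1-|\langle x,y\rangle|^2}$, and attainment is shown by an explicit unitary. Two of your steps differ in a way worth noting. For the equality of the cb-norm with the ordinary norm, the paper amplifies: it observes that $d(\mathbb{C}I,U\oplus\cdots\oplus U)=d(\mathbb{C}I,U)$ and applies Stampfli at each matrix level; you instead prove the upper bound $\|id-Ad_U\|_{cb}\leq 2\|U-\lambda I\|$ directly from the identity $UXU^*-X=U(XV^*-V^*X)$ with $V=U-\lambda I$ (which checks out, since the $\lambda$-terms cancel in $XV^*-V^*X=XU^*-U^*X$) together with complete boundedness of left/right multiplication operators; both routes are fine and of comparable length. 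More substantively, your minimiser is better than the paper's: the paper takes $U$ to act as the identity on $\{x,y\}^\perp$ and calls the attainment ``easily seen,'' but for complex $a=\langle x,y\rangle$ the unique optimal scalar is forced to be $\lambda=a$ (since $\|y-\lambda x\|^2=1-|a|^2+|\lambda-a|^2$), and the identity on the complement then contributes $|1-a|$, which exceeds $\sqrt{1-|a|^2}$ whenever $|a|^2>\mathrm{Re}\,a$ (e.g.\ $a=i\varepsilon$). Your phase $e^{i\theta}$ on the complement and the factor $e^{2i\theta}$ in the $2\times 2$ block repair exactly this, placing the whole spectrum within distance $\sqrt{1-|a|^2}$ of $a$; so your construction is not just a variant but a correction of an imprecision in the paper's attainment argument (which in the paper's later applications is harmless only because there one may reduce to $\langle x,y\rangle\geq 0$).
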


 \begin{proof}

 For any unitary $U$ on $K$, from (Stampfli \cite{Sta}), we see $\|id-Ad_U\|=2d(\mathbb CI,U).$  For
 $n\in \mathbb{N}$, denoting $\tilde{U}=U\oplus\cdots\oplus U$, on $K^n=K\oplus\cdots\oplus K,$ ($n-$ copies),
  we see $ d(\mathbb CI,\tilde{U})=  d(\mathbb CI,U)$ and hence
  $\|id-Ad_U\| _{cb}= 2d(\mathbb CI,U).$   Now if $U$ is a unitary such that $Ux=y$,
  for any $\lambda \in \mathbb{C},$
$$\| (U-\lambda)\| ^2 \geq \| (U-\lambda I)x\| ^2 = \| y-\lambda x\| ^2 \geq 1 -| \langle x, y\rangle |^2,$$
where the last inequality follows as $x,y$ are unit vectors and
$$ |\lambda |^2 -\lambda \langle y, x\rangle -\bar{\lambda }\langle x, y\rangle + |\langle x, y\rangle |^2=
|(\lambda - \langle x, y\rangle)|^2 \geq 0.$$ By considering a
unitary $U$, satisfying $Ux=y$, and $Uv=v$ on $\{ x,y\}^{\perp}$, it
is easily seen that the infimum in $ \inf_{U:Ux=y}~ d(U,\mathbb C)$
is attained and equals $  2 \sqrt{1-|\langle x,y\rangle | ^2}.$

 \end{proof}

As an immediate consequence of this lemma we get the following
bounds for $\gamma$ of pairs of states on $C^*$-algebras.

\begin{theorem}
Let $\phi_1,\phi_2$ be two states on some $C^*$-algebra $\mathcal A.$ Then
 $$ \|\phi_1-\phi_2\| \leq \gamma(\phi_1,\phi_2) \leq 2 \sqrt { \| \phi _1 -\phi _2\|}$$
\end{theorem}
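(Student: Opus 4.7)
The plan is to prove the two inequalities separately.

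For the lower bound $\|\phi_1-\phi_2\|_{cb} \leq \gamma(\phi_1,\phi_2)$, I would take any joint representation tuple $(H,\pi_1,\pi_2,x)$ with $\|x\|=1$ and observe that for $A=(a_{ij})\in M_n(\mathcal{A})$, the entries of $(\phi_1-\phi_2)^{(n)}(A)$ are precisely the matrix coefficients $\langle x,(\pi_1-\pi_2)(a_{ij})x\rangle$ of the operator matrix $(\pi_1-\pi_2)^{(n)}(A)$, so its matrix norm is dominated by that of the operator matrix. Taking suprema and then the infimum over joint representations yields the inequality. (This argument already appears implicitly inside the proof that $\gamma$ is a metric.)

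For the upper bound, I would start with an arbitrary common representation $(H,\pi,x_1,x_2)$ of $(\phi_1,\phi_2)$ (which exists, e.g., on $\mathcal{E}^1\oplus\mathcal{E}^2$). After multiplying $x_1$ by a suitable phase I may assume $c := \langle x_1,x_2\rangle \in [0,1]$. Lemma~\ref{Ad} then furnishes a unitary $U$ on $H$ with $Ux_1=x_2$ and $\|id-Ad_U\|_{cb}=2\sqrt{1-c^2}$. Defining $\pi_2(a) := U^*\pi(a)U$ makes $(H,\pi,\pi_2,x_1)$ a joint representation tuple of $(\phi_1,\phi_2)$, since
\[
\langle x_1,\pi_2(a)x_1\rangle = \langle Ux_1,\pi(a)Ux_1\rangle = \langle x_2,\pi(a)x_2\rangle = \phi_2(a).
\]
Because $\pi-\pi_2 = (id-Ad_{U^*})\circ\pi$, $\pi$ is a $*$-homomorphism (so $\|\pi\|_{cb}=1$), and $\|id-Ad_{U^*}\|_{cb}=\|id-Ad_U\|_{cb}$ (the distance from a unitary to $\mathbb{C}I$ is preserved under the adjoint), I obtain $\gamma(\phi_1,\phi_2)\leq 2\sqrt{1-c^2}$.

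From $1-c^2=(1-c)(1+c)\leq 2(1-c)=\|x_1-x_2\|^2$ I deduce $\gamma(\phi_1,\phi_2)\leq 2\|x_1-x_2\|$, and taking infimum over common representations gives $\gamma(\phi_1,\phi_2)\leq 2\beta(\phi_1,\phi_2)$. Combined with the identity $\|\phi_1-\phi_2\|_{cb}=\|\phi_1-\phi_2\|$ (automatic for scalar-valued linear functionals) and the classical Fuchs--van de Graaf lower bound $\beta(\phi_1,\phi_2)^2\leq \|\phi_1-\phi_2\|$, this yields the stated upper bound. The main obstacle is precisely this Fuchs--van de Graaf inequality, which is standard but not entirely trivial; a proof proceeds via Uhlmann's purification theorem reducing the question to the classical inequality $1-\sum_i\sqrt{p_iq_i}\leq \tfrac12\sum_i|p_i-q_i|$ between fidelity and total variation distance. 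The rest of the argument is essentially just a direct application of Lemma~\ref{Ad}, converting a common representation into a joint representation by unitary conjugation.
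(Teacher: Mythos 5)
Your proof is correct and follows essentially the same route as the paper: both halves hinge on Lemma~\ref{Ad} to convert a common representation $(H,\pi,x_1,x_2)$ into a joint one via $\pi_2=U^*\pi U$, and both conclude through the chain $\gamma(\phi_1,\phi_2)\leq 2\beta(\phi_1,\phi_2)\leq 2\sqrt{\|\phi_1-\phi_2\|}$, the last step being the cited Bures/Kretschmann--Schlingemann--Werner (Fuchs--van de Graaf type) inequality, which both you and the paper treat as an external result. The only (harmless) difference is that the paper works in a common representation where the Bures distance is attained and reads off the sharper intermediate bound $\gamma\leq\beta\sqrt{4-\beta^2}$ directly, whereas you use the elementary estimate $1-c^2\leq 2(1-c)=\|x_1-x_2\|^2$ in an arbitrary common representation and then take the infimum, which avoids invoking the attainment result altogether.
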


 \begin{proof} Note that for a linear functional the norm and the completely bounded norm coincide.
 The lower bound is now clear from the definition of $\gamma .$
 From Proposition 1.6, \cite{Bur}, we know that there is a common representation space in which Bures distance is
 attained. Let $(K,\pi,x,y)$ be a common representation at which the Bures  distance
 for $(\phi _1, \phi _2)$
  is attained.
Consider $\pi_1=\pi$ and $\pi_2=U^*\pi U,$ where $U$ is a unitary on
$K$ such that $Ux=y.$ Then $(K,\pi_1,\pi_2,x)$ is
a joint representation of $(\phi_1,\phi_2).$ We get \begin{eqnarray*} \gamma(\phi_1,\phi_2) &\leq& \inf_{U:Ux=y}~ \|\pi(\cdot)-U^*\pi(\cdot)U\|_{cb} \\ & \leq & \|id_{\mathcal B(K)}-U^*id_{\mathcal B(K)}U\|_{cb} \\ &=& 2 \sqrt{1-|\langle x,y\rangle|^2} \\ &=& \beta(\phi_1,\phi_2)\sqrt{4-\beta^2(\phi_1,\phi_2)} \\
  &\leq& 2\beta(\phi_1,\phi_2) \\ &\leq & 2 \sqrt{\|\phi_1-\phi_2\|},   \end{eqnarray*}
where the last inequality is from \cite{Bur} and \cite{Con}.
\end{proof}

%\subsection{Alternative formula for Bures metric for states}

Now we come to our main theorem on relationship between $\beta $ and
$\gamma .$

\begin{theorem}\label{MainTheorem}

Suppose $\phi_1,\phi_2$ are two states on a $C^*$-algebra $\mathcal A.$ Then
$$\beta ^2(\phi _1, \phi _2)= 2-\sqrt {4-\gamma ^2(\phi _1, \phi _2) }.$$
\end{theorem}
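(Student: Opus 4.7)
The forward inequality $\gamma\le\beta\sqrt{4-\beta^{2}}$ is exactly Theorem 6.2, so my task is to establish the reverse $\gamma\ge\beta\sqrt{4-\beta^{2}}$, i.e.\ $\beta^{2}\le 2-\sqrt{4-\gamma^{2}}$. By the attainability result of Section 5 (which specializes to states) I may fix a joint representation $(H,\pi_1,\pi_2,x)$ realising $\|\pi_1-\pi_2\|_{cb}=\gamma$; my plan is to manufacture from it a common representation whose two vectors lie at distance at most $\sqrt{2-\sqrt{4-\gamma^{2}}}$ of each other.

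I would work inside the natural direct-sum common representation $(H\oplus H,\pi_1\oplus\pi_2)$ with $x_1:=(x,0)$ as the $\phi_1$-representative. Any $\phi_2$-representative has the form $x_2=(u,v)$ subject to $\|u\|^{2}+\|v\|^{2}=1$ and $\langle u,\pi_1(\cdot)u\rangle+\langle v,\pi_2(\cdot)v\rangle=\phi_2(\cdot)$; a direct expansion gives $\|x_1-x_2\|^{2}=2-2\operatorname{Re}\langle x,u\rangle$. For any contractive intertwiner $T\colon H\to H$ from $\pi_2$ to $\pi_1$ (meaning $T\pi_2(a)=\pi_1(a)T$ for every $a\in\mathcal{A}$ and $\|T\|\le 1$), the intertwining relation puts $T^{*}T$ in $\pi_2(\mathcal{A})'$, and one checks that the choice $u:=Tx$, $v:=\sqrt{I-T^{*}T}\,x$ produces a valid $\phi_2$-representative; the corresponding distance simplifies to $\|x_1-x_2\|^{2}=2-2\operatorname{Re}\langle x,Tx\rangle$.

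The problem thus reduces to exhibiting a contractive intertwiner with $|\langle x,Tx\rangle|\ge\sqrt{1-\gamma^{2}/4}$, which is the main obstacle: non-trivial intertwiners between $\pi_1$ and $\pi_2$ may fail to exist on $H$ itself. My plan to get around this is to dilate the joint representation to a larger one $(\tilde H,\tilde\pi_1,\tilde\pi_2,x)$ preserving $\|\tilde\pi_1-\tilde\pi_2\|_{cb}=\|\pi_1-\pi_2\|_{cb}$ and arranged so that $\tilde\pi_2=U^{*}\tilde\pi_1 U$ for a unitary $U$ on $\tilde H$. Exploiting the freedom to modify $U$ by unitaries of $\tilde\pi_1(\mathcal{A})'$, one selects $U$ so as to make the trivial inequality $\|\tilde\pi_1-\tilde\pi_2\|_{cb}\le\|\mathrm{id}-\mathrm{Ad}_{U}\|_{cb}$ as tight as possible. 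Setting $T:=U$ yields a unitary (hence contractive) intertwiner, and the Cauchy--Schwarz type estimate from the proof of Lemma 6.1 applied to the pair $(x,Ux)$ gives $|\langle x,Ux\rangle|^{2}\ge 1-\|\mathrm{id}-\mathrm{Ad}_{U}\|_{cb}^{2}/4\ge 1-\gamma^{2}/4$; feeding this back produces $\|x_1-x_2\|^{2}\le 2-\sqrt{4-\gamma^{2}}$.

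Passing to infima gives $\beta^{2}\le 2-\sqrt{4-\gamma^{2}}$, which combined with Theorem 6.2 yields $\gamma^{2}=\beta^{2}(4-\beta^{2})$. Solving this quadratic in $\beta^{2}$ produces the two roots $2\pm\sqrt{4-\gamma^{2}}$, and the trivial ranges $0\le\beta\le\sqrt{2}$ and $0\le\gamma\le 2$ noted by the authors just after the statement of the theorem force the minus sign, giving $\beta^{2}(\phi_1,\phi_2)=2-\sqrt{4-\gamma^{2}(\phi_1,\phi_2)}$ as desired.
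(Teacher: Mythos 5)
Your reduction is sensible and one piece of it is genuinely nice: the observation that a contractive intertwiner $T$ from $\pi_2$ to $\pi_1$ (so $T^*T\in\pi_2(\mathcal A)'$) yields a valid $\phi_2$-representative $x_2=(Tx,\sqrt{I-T^*T}\,x)$ in $H\oplus H$ with $\|x_1-x_2\|^2=2-2\operatorname{Re}\langle x,Tx\rangle$ is correct, and it correctly reduces the problem to producing an intertwiner with $\operatorname{Re}\langle x,Tx\rangle\geq\sqrt{1-\gamma^2/4}$. But there is a genuine gap at exactly that point. After arranging $\tilde\pi_2=U^*\tilde\pi_1U$, Johnson's theorem gives $\|\tilde\pi_1-\tilde\pi_2\|_{cb}=2d(U,\tilde\pi_1(\mathcal A)')$, whereas the quantity you need to control, $\|\mathrm{id}-\mathrm{Ad}_U\|_{cb}=2d(U,\mathbb C)$, is in general strictly larger. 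Replacing $U$ by $VU$ with $V$ a unitary in $\tilde\pi_1(\mathcal A)'$ leaves $d(U,\tilde\pi_1(\mathcal A)')$ unchanged but there is no argument that some such modification brings $d(VU,\mathbb C)$ down to $d(U,\tilde\pi_1(\mathcal A)')$; that would require the nearest point of the commutant to $U$ to be (a scalar multiple of) a unitary, which need not hold. The sentence ``one selects $U$ so as to make the trivial inequality as tight as possible'' is precisely the assertion that has to be proved, and it is where all the difficulty of the theorem lives.

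The paper closes this gap by a different mechanism. It takes $X\in\pi(\mathcal A)'$ attaining $d(U,\pi(\mathcal A)')$ (Lemma \ref{attainment}), splits into the degenerate case where every such $X$ has nontrivial kernel or non-dense range (forcing $\gamma=2$ and, separately, $\beta=\sqrt2$), and otherwise writes $X=V|X|$ with $V$ a unitary in $\pi(\mathcal A)'$, so that $\|U-X\|=\|V^*U-|X|\|$ with $V^*U$ unitary sending $x$ to $V^*y\in S(\pi,\phi_2)$. The key estimate is then Lemma \ref{difference}, a spectral lower bound $\|W-P\|\geq\sqrt{1-[\operatorname{Re}\langle x,Wx\rangle]^2}$ for a unitary $W$ against a \emph{positive} operator $P$ --- not against scalars --- followed by the attainability of the Bures distance to identify the resulting infimum with $\tfrac12\beta\sqrt{4-\beta^2}$. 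If you want to repair your argument you would need some substitute for this unitary-versus-positive estimate; the distance-to-scalars bound from Lemma \ref{Ad} alone cannot do the job. Your final algebra (combining the two inequalities and selecting the root $2-\sqrt{4-\gamma^2}$) is fine, as is citing the upper bound $\gamma\leq\beta\sqrt{4-\beta^2}$ from the earlier theorem.
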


%The proof we postpone to the
%very end, after having illustrated the immediate consequences.

The key to the proof of Theorem \ref{MainTheorem} is the following Lemma.

\begin{lemma}\label{difference}

Let  $x, y$ be unit vectors on a Hilbert space $K$ and let $W$ be a
unitary on $K$ such that $Wx=y$.  Let $P$ be any positive operator
on $K.$ Then $$ \|W-P\| \geq \sqrt{1-[\mbox{Re}\langle
x,y\rangle]^2}.$$

\end{lemma}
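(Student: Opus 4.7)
The plan is to reduce to the case where $P$ commutes with $W$ (via an averaging argument) and then apply the spectral theorem for unitary operators. Since conjugation by $W^n$ is an isometry and $W^n W W^{-n} = W$, one has $\|W - W^{-n}PW^n\| = \|W - P\|$ for every $n \in \mathbb{Z}$. Averaging the family $\{W^{-n}PW^n\}$ against an invariant Banach limit on $\mathbb{Z}$ (applied weakly) produces a bounded positive operator $\bar P$ that commutes with $W$ (by shift-invariance of the limit) and satisfies $\|W - \bar P\| \le \|W - P\|$ by convexity. It therefore suffices to prove the inequality under the additional assumption that $P$ commutes with $W$.

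With $P$ commuting with $W$, the spectral theorem gives a decomposition $K = \int^{\oplus} K(\theta)\,d\mu(\theta)$ on which $W$ acts as $e^{i\theta} I_{K(\theta)}$, together with $P = \int^{\oplus} P(\theta)\,d\mu(\theta)$ where each $P(\theta) \ge 0$. Since $\operatorname{spec} P(\theta) \subseteq [0,\infty)$, for every $\theta$ one has $\|e^{i\theta} I - P(\theta)\| \ge \operatorname{dist}(e^{i\theta},[0,\infty))$, which equals $|\sin\theta|$ when $\cos\theta \ge 0$ and $1$ when $\cos\theta < 0$; denote the square of this distance by $f(\theta)$. Taking essential suprema gives $\|W-P\|^2 \ge \operatorname{ess\,sup}_\mu f$.

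To conclude, let $\mu_x(B) := \langle x, E(B)x\rangle$ be the scalar spectral measure of $x$, so that $\alpha = \int \cos\theta\,d\mu_x$; since $\mu_x \ll \mu$, the $\mu$-essential supremum of $f$ dominates its $\mu_x$-essential supremum. If $\mu_x(\{\cos\theta \le 0\}) > 0$ then $f = 1$ on that set, giving $\operatorname{ess\,sup}_{\mu_x} f \ge 1 \ge 1 - \alpha^2$. Otherwise $\mu_x$ is supported on $\{\cos\theta > 0\}$ and, setting $m := \operatorname{ess\,inf}_{\mu_x} \cos\theta \ge 0$, the inequality $\alpha \ge m$ forces $1 - \alpha^2 \le 1 - m^2 = \operatorname{ess\,sup}_{\mu_x} \sin^2\theta = \operatorname{ess\,sup}_{\mu_x} f$. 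The hard part will be executing the averaging step rigorously in infinite dimensions; in the finite-dimensional case the Ces\`aro average $\frac{1}{N}\sum_{n=0}^{N-1} W^{-n} P W^n$ converges and furnishes $\bar P$ directly, while the general case requires an invariant mean on $\mathbb{Z}$ together with verification of positivity, commutation with $W$, and the norm estimate for $\bar P$. Everything after the reduction is a routine spectral-theoretic computation and a short case analysis reminiscent of Jensen's inequality.
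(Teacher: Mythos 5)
Your argument is correct (the steps you defer are all standard and do go through), but it takes a genuinely different and considerably heavier route than the paper. You first force $P$ to commute with $W$ by averaging the conjugates $W^{-n}PW^{n}$ against an invariant mean on $\mathbb Z$, then decompose $K$ as a direct integral over $\sigma(W)$ and argue fiberwise, locating a good spectral point via the scalar measure $\mu_x$ (note that the direct-integral step also tacitly needs a reduction to a separable reducing subspace containing $x$, which you should record). The paper shows the commutation reduction is unnecessary: positivity of $P$ enters only through the single scalar fact that $\langle v,Pv\rangle\in[0,\infty)$ for every unit vector $v$, whence $\|W-P\|\geq |\langle v,(W-P)v\rangle|\geq d(\langle v,Wv\rangle,\mathbb R_+)-\epsilon$ when $v$ approximately localizes $W$ at a spectral point $\lambda$ (possible since the spectrum of a normal operator is its approximate point spectrum). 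It then finds $\lambda\in\sigma(W)$ with $0\leq \mathrm{Re}\,\lambda\leq \mathrm{Re}\langle x,y\rangle$ because $\langle x,y\rangle=\langle x,Wx\rangle$ lies in the closed convex hull of $\sigma(W)$, and $d(\lambda,\mathbb R_+)=|\mathrm{Im}\,\lambda|\geq\sqrt{1-[\mathrm{Re}\langle x,y\rangle]^2}$ (with the case $\mathrm{Re}\,\lambda\leq 0$ for some spectral point handled separately, exactly as in your first case). Both proofs ultimately rest on the same geometric fact --- a ``convex combination cannot lie strictly below all its constituents,'' which you extract from $\mathrm{ess\,inf}_{\mu_x}\cos\theta\leq\int\cos\theta\,d\mu_x$ and the paper extracts from the convex hull of the spectrum --- but the paper's version needs no invariant means and no direct integrals, so if you write yours up you are buying generality you do not use at the cost of several pages of routine verification.
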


\begin{proof} Let $\lambda \in \sigma (W).$ As $W$ is normal for $\epsilon >0$, there exists a
unit vector $v_{\epsilon}\in K$ such that
%$\| Wv_{\epsilon} -\lambda v_{\epsilon}\| <\epsilon $ or equivalently
$$|\langle v_{\epsilon}, Wv_{\epsilon}\rangle -\lambda |<\epsilon .$$
Moreover as $P$ is positive, $\langle v_{\epsilon}, Pv_{\epsilon}\rangle \in \mathbb {R}_+.$ Observe that as $\lambda \in \sigma (W),$
$$d(\lambda , \mathbb{R}_+) =\left\{ \begin{array}{lcl}
1& \mbox{if} & \mbox{Re} (\lambda )\leq 0;\\
\mbox {Im }(\lambda )& \mbox{if} & \mbox{Re} (\lambda )>0.
\end{array} \right.$$
Hence if there exists $\lambda \in \sigma (W)$ with $Re(\lambda
)\leq 0$, we get
$$\|
W-P \| \geq | \langle v_{\epsilon} , (W-P) v_{\epsilon}\rangle | \geq (1-\epsilon), $$
for every $\epsilon >0.$ That is, $\| W-P\| \geq 1.$ Then the result follows trivially as
$\sqrt{1-[\mbox{Re}\langle x,y\rangle]^2}\leq 1.$

So we may assume $\mbox{Re}( \lambda )>  0$ for every $\lambda \in
\sigma (W)$. Now   as $\langle x , y\rangle $ is in the numerical
range of unitary $W$,  it is in the convex hull of $\sigma (W).$
Consequently $\mbox{Re}(\langle x, y\rangle )\geq 0$ and there
exists $\lambda $ in $\sigma (W)$ such that $0\leq \mbox{Re}(
\lambda ) \leq \mbox{Re} (\langle x, y \rangle ),$ or $\mbox {Im} (
\lambda ) \geq \sqrt { 1- \mbox{Re} (\langle x, y \rangle )^2}. $
For $\epsilon >0$, choose $v_{\epsilon}$ as before. Therefore  $\|
W-P\|\geq |\langle v_{\epsilon}, (W-P)v_{\epsilon}\rangle | \geq
d(\lambda , \mathbb {R} _+)-\epsilon = \mbox {Im}~ (\lambda
)-\epsilon. $ As $\epsilon >0$ is arbitrary this completes the
proof.

\end{proof}

We also need the following well-known theorem.
\begin{theorem}\label{Johnson} (Johnson \cite{Joh}) Suppose $\pi$ is a faithful representation
of a $C^*$-algebra $\mathcal A$ on a Hilbert space $K$ and $U$ is a
unitary on $K.$ Then $\|\pi -Ad_U\circ \pi \|_{cb}=2
d(U,\pi(\mathcal A)^\prime).$

\end{theorem}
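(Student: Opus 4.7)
The plan is to establish the two inequalities separately. For the upper bound, the identity $\pi(a) - U\pi(a)U^* = [\pi(a),U]\,U^*$ together with $[\pi(a),U] = [\pi(a),U-V]$ for every $V \in \pi(\mathcal A)'$ yields $\|(\pi - \mathrm{Ad}_U \circ \pi)(a)\| \le 2\|U-V\|\,\|\pi(a)\|$. To promote this to the cb norm, I would ampliate to $M_n(\mathcal A)$: the commutant of $\pi^{(n)}(M_n(\mathcal A))$ inside $\mathcal B(K\otimes\mathbb C^n)$ is $\pi(\mathcal A)' \otimes I_n$, and $\|U \otimes I_n - V \otimes I_n\| = \|U - V\|$, so running the same estimate at every matrix level and taking the infimum over $V \in \pi(\mathcal A)'$ gives $\|\pi - \mathrm{Ad}_U\circ \pi\|_{cb} \le 2\,d(U,\pi(\mathcal A)')$.

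The substantive direction is the reverse inequality; this is Johnson's theorem proper. I would first reduce to the von Neumann setting. Using faithfulness to identify $\mathcal A$ with $\pi(\mathcal A)$ and setting $\mathcal M := \pi(\mathcal A)''$, one has $\mathcal M' = \pi(\mathcal A)'$, so the right-hand side is unchanged. Kaplansky's density theorem applied at each matrix level, combined with the strong-operator continuity of $x \mapsto Ux - xU$ on norm-bounded sets and the SOT lower semicontinuity of the operator norm, shows that the cb norm of $\delta_U := [U,\cdot]$ restricted to $\pi(\mathcal A)$ equals its cb norm restricted to $\mathcal M$. It then suffices to prove, for a von Neumann algebra $\mathcal M \subset \mathcal B(K)$ and $T \in \mathcal B(K)$, the bound $\|\delta_T|_\mathcal M\|_{cb} \ge 2\,d(T,\mathcal M')$.

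This is the classical Stampfli--Johnson estimate \cite{Joh}. The ingredients are: (i) weak-$*$ compactness of the unit ball of $\mathcal M'$, which produces a nearest point $V_0 \in \mathcal M'$ with $\|T - V_0\| = d(T,\mathcal M')$; (ii) a Hahn--Banach separation argument in the predual $\mathcal M_*$ that converts optimality of $V_0$ into the existence of contractions $a \in \mathcal M$ with $\|[T - V_0, a]\|$ arbitrarily close to $2\|T - V_0\|$; and (iii) the commutant identity $(\mathcal M \otimes M_n)' = \mathcal M' \otimes I_n$, which keeps the distance invariant under ampliation and so promotes the single-level bound to a cb bound automatically. The main obstacle is step (ii), the heart of Stampfli's and Johnson's innovation: starting from only the numerical value $\|T - V_0\|$, one must construct a near-extremizing contraction in $\mathcal M$. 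The classical idea is to exploit the self-adjoint/skew-adjoint decomposition of $T - V_0$ modulo $\mathcal M'$ and to build the required contraction from spectral projections in $\mathcal M$ whose symmetric difference nearly doubles the norm under commutation with $T - V_0$; combining this with step (iii) then delivers the matching lower bound.
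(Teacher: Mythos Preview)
Your proposal is correct and follows essentially the same route as the paper: reduce to the von Neumann algebra $\pi(\mathcal A)''$ via Kaplansky density and then invoke Johnson's result \cite{Joh}. The paper's proof is in fact nothing more than these two sentences---it cites Theorem~7 of \cite{Joh} directly after the Kaplansky step---whereas you additionally spell out the elementary upper bound and sketch the Stampfli--Johnson mechanism behind the lower bound; but the underlying strategy is identical.
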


\begin{proof}

Making use of Kaplansky density theorem, we may replace the
$C^*$-algebra  $ \pi (\mathcal A ) $ by the von Neumann algebra
generated by it. Now the result follows from Theorem 7 of
\cite{Joh}.

\end{proof}

\begin{lemma}\label{attainment}
Suppose $\pi$ is a faithful representation of a $C^*$-algebra
$\mathcal A$ on a Hilbert space $K$ and $U$ is a unitary on $K.$
Then there exists $X\in \pi (\mathcal A )'$ such that $d(U, \pi
(\mathcal A)')=\| U-X\|.$
\end{lemma}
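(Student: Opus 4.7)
The plan is a standard weak-operator-topology compactness argument, exploiting that $\pi(\mathcal A)'$ is a von Neumann algebra.

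First I would observe that, being the commutant of a self-adjoint subset of $\mathcal B(K)$, the set $\pi(\mathcal A)'$ is a unital $*$-subalgebra which is closed in the weak operator topology. Set $d := d(U, \pi(\mathcal A)')$ and choose a sequence $X_n \in \pi(\mathcal A)'$ with $\|U - X_n\| \to d$. Since $\|X_n\| \leq \|U\| + \|U - X_n\|$, the sequence is uniformly norm-bounded, say by $R = d+2$.

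Next I would invoke Banach--Alaoglu: the closed ball of radius $R$ in $\mathcal B(K)$ is compact in the weak operator topology. Hence $(X_n)$ has a WOT-convergent subnet $X_\alpha \to X$ for some $X \in \mathcal B(K)$ with $\|X\| \leq R$. Because $\pi(\mathcal A)'$ is WOT-closed, we get $X \in \pi(\mathcal A)'$.

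It remains to show that the operator norm is lower semicontinuous under WOT convergence, applied to the net $U - X_\alpha \to U - X$. For any unit vectors $\xi, \eta \in K$,
\[
|\langle \xi, (U-X)\eta\rangle| = \lim_\alpha |\langle \xi, (U-X_\alpha)\eta\rangle| \leq \liminf_\alpha \|U - X_\alpha\| = d,
\]
and taking the supremum over $\xi, \eta$ yields $\|U - X\| \leq d$. Combined with the trivial lower bound $\|U - X\| \geq d$, this gives the required equality. The argument is essentially routine; the only subtlety worth flagging is the switch from a minimizing \emph{sequence} to a WOT-convergent \emph{subnet}, since WOT need not be metrizable on bounded subsets of $\mathcal B(K)$ when $K$ is non-separable, but this does not affect the conclusion.
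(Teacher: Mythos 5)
Your argument is correct and is essentially the same as the paper's: both extract a norm-bounded minimizing sequence in $\pi(\mathcal A)'$, pass to a WOT-convergent subnet using compactness of bounded sets, use WOT-closedness of the commutant, and finish with lower semicontinuity of the operator norm under WOT convergence (which the paper writes out explicitly via an $\epsilon$-argument rather than naming it). No gaps.
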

\begin{proof}

 This is an application of the fact that inf-sup is greater than sup-inf.
Indeed, from the definition of infimum, there is a sequence $\{
X_n\}_{n\geq 1} $ in $\pi(\mathcal A)^\prime$ such that
$\|U-X_n\|\leq d(U,\pi(\mathcal A)^\prime)+\frac{1}{n}.$ Observe
that as $I\in \pi (\mathcal A)^\prime ,$ trivially $d(U, \pi
(\mathcal A)^\prime ) \leq 1.$ Consequently  $\|X_n\| \leq \|U-X_n\|
+ \|U\| \leq 1+\frac{1}{n}+1\leq 3$. So $\{ X_n\} _{n\geq 1} $ is a
norm bounded sequence. Hence it has a WOT convergent subnet
converging to some $X$ (say). Clearly $X\in \pi(\mathcal A)^\prime$
as $\pi(\mathcal A)^\prime$ is WOT closed. Now
$$\|U-X\| = \sup_{\|z\|\leq 1,\|w\| \leq 1}|\langle z,
(U-X)w\rangle| . $$ Hence for $\epsilon >0$, there exist $z, w \in
K, \|z\|, \|w\|\leq 1$, such that $\| U-X\| < |\langle z,
(U-X)w\rangle |+\epsilon .$ Then by WOT convergence, we get $n\geq
1$, such that $|\langle z, (X_n-X)w\rangle | < \epsilon $ and $\|
(U-X_n)\|< d(U, \pi (\mathcal A)^\prime )+\epsilon .$ Combining all
three inequalities, we have $\| U-X\| < |\langle z, (U-X_n)w\rangle
|+ |\langle z, (X_n-X)w\rangle |+ \epsilon \leq d(U, \pi (\mathcal
A)^\prime )+3\epsilon .$ As $\epsilon >0$ is arbitrary, we conclude
that $\| U-X\|=d(U, \pi(\mathcal A)^\prime ).$

\end{proof}

\textbf{Proof of Theorem \ref{MainTheorem}} :  Given two
representations $\pi_1,\pi_2$ of $\phi_1,\phi_2$ respectively on
some Hilbert space $K$ together with $x\in K,$ such that $ \phi
_1(\cdot )=\langle x,\pi_1(\cdot)x\rangle $ and $\phi_2(\cdot
)=\langle x,\pi_2(\cdot)x\rangle $,  we may consider unitarily
equivalent representations $\pi_1\oplus \pi_2 $ and $\pi_2\oplus
\pi_1$ on $K\oplus K$ with $x\oplus 0\in K\oplus K.$ This does not
change the norm difference. In other words, we may restrict
ourselves  with unitarily equivalent representations $\pi_1,\pi_2$
on $\mathcal K.$ Suppose $U$ is a unitary on $\mathcal K$ which
intertwine $\pi_1$ and $\pi_2.$ Let $y=Ux.$ So we are led to
consider all tuple $(\pi,K,x,y,U)$ such that $ \phi_1(\cdot)=\langle
x,\pi(\cdot)x\rangle $ and $\phi_2(\cdot)=\langle y,\pi(\cdot)y\rangle,$ $Ux=y.$ It follows that $$ \gamma(\phi_1,\phi_2) =
\inf_{\{\pi,K,U,x,y\}}~ \|\pi-U^*\pi U\|_{cb}.$$ Suppose
$(\pi,K,x,y,U)$ is one such tuple. From Theorem \ref{Johnson}, we
get $$\|\pi-U^*\pi U\|_{cb}= 2 d(U,\pi(\mathcal A)^\prime).$$ Then
by Lemma \ref{attainment}, there exists $X\in \pi (\mathcal A)'$ such that
$\| U-X\|= d(U,\pi(\mathcal A)^\prime).$

Case (i) Every $X$  as above has either non-trivial kernel or has a
range which is not dense (equivalently, $X^*$ has non-trivial
kernel): Clearly in such cases $\|U-X\| = \| U^*-X^*\| \geq 1.$
Suppose in every common representation $\{\pi,K,x,y,U\},$ we find
$X$ with either non-trivial kernel or non-dense range, then we
conclude that $\gamma(\phi_1,\phi_2)=2.$ We shall be done if we show
that in that case $\beta(\phi_1,\phi_2)=\sqrt{2}.$ Indeed in any
common representation $(\pi,K,x,y)$ with $\langle x,y\rangle \neq
0,$ we choose unitary $U$ as in Lemma \ref{Ad}, we see
$\gamma(\phi_1,\phi_2) <2$ contradicting our conclusion. Thus in any
common representation $(\pi,K,x,y),$ we have $\langle x,y \rangle
=0.$ Hence in this case, $\beta (\phi_1,\phi_2)=\sqrt{2}.$

Case (ii) For some tuple $(\pi , K,x,y, U)$, there exists $X$ as
above having trivial kernel and dense range. Taking
polar decomposition of $X=V|X|,$ $V, |X| \in \pi(\mathcal A)^\prime$
with $V$ unitary, we have $\|U-X\|=\|V^*U-|X|\|.$  Now from Lemma
\ref{difference}, we get $$ \|V^*U-|X|\| \geq
\sqrt{1-[\mbox{Re}\langle x,V^*y \rangle]^2}.$$ Note that $V^*y \in
S(\pi, \phi_2).$ Hence $$ \|V^*U-|X|\| \geq \inf_{x^\prime\in
S(\pi,\phi_1), y^\prime\in S(\pi,\phi_2)}\sqrt{1-|\langle
x^\prime,y^\prime \rangle|^2}.$$

 One thing to be noted, while computing Bures distance
for states is that we only need  to consider all common
representations $(K, \pi, x, y)$ such that $\langle
x,y\rangle \geq 0.$ Indeed if $\langle x, y\rangle = |\langle x,
y\rangle |e^{\i\theta}$,   we may change $x$ to $x_1=e^{-i\theta}x.$
Note that $\phi _1(\cdot )= \langle x_1, (\cdot )x_1\rangle $ and
$\|x _1-y\| ^2= 2-2|\langle x, y\rangle | \leq \|x-y\| ^2.$

 It is known that Bures distance is attained. See Lemma 1,\cite{Ara}, Lemma 5.3,
  Proposition 6, \cite{Con} for more details. Consider a common representation $(K, \pi ,x,y)$ in which the Bures distance
 is attained,
 and $\langle x, y\rangle \geq 0$, as $\beta ^2(\phi _1, \phi _2)
 =2-2\langle x, y\rangle, $
  by direct computation,
 $$\sqrt{1-\langle x,y \rangle^2} = \frac{1}{2}\beta(\phi_1,\phi_2)\sqrt{4-\beta^2(\phi_1,\phi_2)}.$$
 We get immediately that for any tuple $(\pi,K,x,y,U)$ as above,  $$\|\pi-U^*\pi U\|_{cb} \geq
 \beta(\phi_1,\phi_2)\sqrt{4-\beta^2(\phi_1,\phi_2)}.$$
 Therefore $$ \gamma(\phi_1,\phi_2) \geq \beta(\phi_1,\phi_2)\sqrt{4-\beta^2(\phi_1,\phi_2)}. $$
  Now for the reverse inequality, choose $(\pi,K,x,y)$ such that Bures
  distance is attained and
  $\langle x, y\rangle \geq 0$. Choose a unitary $U$ with $Ux=y$. Recalling Lemma $\ref{Ad}$,
  we see that \begin{eqnarray*}  \gamma(\phi_1,\phi_2) &\leq& \|\pi-U^*\pi U\|_{cb} \\ &=& 2d(U,\pi(\mathcal A)^\prime) \\ &\leq& 2d(U,\mathbb CI) \\ &=& 2\sqrt{1-\langle x,y \rangle^2} \\ &=& \beta(\phi_1,\phi_2)\sqrt{4-\beta^2(\phi_1,\phi_2)}. \end{eqnarray*} Hence the reverse inequality holds and this proves the theorem.  \qed

Now we extend the main result to injective range algebras. This
requires a non-trivial result of Choi and Li \cite{Choi}. In this
section we would be realizing the Hilbert $C^*$-modules concretely
as operators from one Hilbert space to another and so we will be
using capital letters $X,Y$ etc., to denote elements of the module
and small letters $g,h$ etc., to denote vectors in Hilbert spaces.
If $T$ is a contraction on  a Hilbert space $H$, a unitary $V$ on a
Hilbert space $K\supseteq H$ is said to be a dilation of $T$ if
$T=P_HV|_H.$

\begin{theorem}\label{ChoiLi}
Let $T$ be a contraction on a Hilbert space $H$ satisfying $T+T^*\geq r I$
for some $r\in \mathbb{R}.$ Then there exists a unitary dilation
$V$  of  $~T$ on  $H\oplus H$ satisfying $V+V^*\geq rI.$
\begin{proof} This is Theorem 2.1 of \cite{Choi}, with change of notation being, $A=T, V=U$ and $\mu =-r .$
\end{proof}
\end{theorem}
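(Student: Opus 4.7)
The statement is attributed to Choi--Li as their Theorem~2.1, so in practice the proof is by citation. If I had to reconstruct it, my plan would be to build $V$ explicitly as a $2 \times 2$ block operator on $K \oplus K$ of the form $V = \left(\begin{smallmatrix} T & X \\ Y & Z \end{smallmatrix}\right)$, and to arrange the blocks so that both unitarity and the positivity $V+V^* \geq rI$ hold simultaneously. The unitarity equations $V^*V = VV^* = I$ force $Y^*Y = I - T^*T = D_T^2$ and $XX^* = I - TT^* = D_{T^*}^2$, so up to partial isometries on the defect spaces the natural first attempt is the Halmos-type matrix
\[
V_0 \;=\; \begin{pmatrix} T & -D_{T^*} \\ D_T & T^* \end{pmatrix},
\]
whose unitarity follows from the intertwining identity $D_T T^* = T^* D_{T^*}$.

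I would then test the positivity condition. Direct computation gives
\[
V_0 + V_0^* \;=\; \begin{pmatrix} T+T^* & D_T - D_{T^*} \\ D_T - D_{T^*} & T+T^* \end{pmatrix},
\]
and conjugation by the Hadamard-type unitary $U = \tfrac{1}{\sqrt 2}\left(\begin{smallmatrix} I & I \\ I & -I \end{smallmatrix}\right)$ brings this to block-diagonal form with entries $T+T^* \pm (D_T - D_{T^*})$. Hence $V_0 + V_0^* \geq rI$ reduces to $T+T^* \pm (D_T - D_{T^*}) \geq rI$, which is automatic only when $T$ is normal (so $D_T = D_{T^*}$) and fails in general. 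The naive Halmos dilation therefore does not suffice.

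To correct this, I would either introduce a unitary gauge on the second summand, passing to $V = (I \oplus W)V_0(I \oplus W^*)$ or to a more flexible Halmos-like block matrix with $Z = W T^* W^*$ and suitably twisted off-diagonals, and use the freedom in $W$ to damp the obstructing off-diagonal term $D_T - D_{T^*}$ in the operator order; or reduce to the case $r = 0$ via the translation $S := T - \tfrac{r}{2}I$, which converts the hypothesis to $S + S^* \geq 0$ at the cost that $S$ is no longer a contraction. The latter recasts the problem as the numerical-range statement that every operator with numerical range in the closed right half plane admits a unitary dilation with the same property, after which we recover $V = \tfrac{r}{2}I + W$ from such a dilation $W$ of $S$.

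The main obstacle is coupling the two constraints: unitarity is rigid up to unitary gauges on the defect spaces, while $V+V^* \geq rI$ involves all four blocks simultaneously, so one cannot adjust the bottom-right block without disturbing unitarity elsewhere. Choi and Li resolve this with an explicit parameterization whose unitarity is checked by direct block multiplication using the defect intertwinings, and whose positivity follows from a Schur complement argument fed by the hypothesis $T+T^* \geq rI$. For our purposes it suffices to invoke their theorem with the dictionary $A = T$, $H = K$, $U = V$, and $\mu = -r$.
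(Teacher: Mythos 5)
Your proof, like the paper's, consists of the citation of Choi--Li's Theorem 2.1 with the same notational dictionary ($A=T$, $H=K$, $U=V$, $\mu=-r$), so the operative step is identical to the paper's. Your supplementary sketch correctly verifies that the plain Halmos dilation $\left(\begin{smallmatrix} T & -D_{T^*} \\ D_T & T^* \end{smallmatrix}\right)$ fails the positivity constraint unless $D_T=D_{T^*}$, but since you ultimately defer to the cited theorem rather than completing the gauge/translation repair, there is nothing further to check.
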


We also need the following observation about unitary dilations of strict contractions.

\begin{lemma}\label{formofunitary}
    Let $T$ be a strict contraction on a finite dimensional Hilbert space $H.$ Then any
     unitary dilation $V$ of $T$ on $H\oplus H$ is up to unitary equivalence  of the form $$ V= \left(
    \begin{array}{cc}
    T  & -(I-TT^*)^\frac{1}{2}W\\
    (I-T^*T)^\frac{1}{2}  & T^*W\\
    \end{array}
    \right)$$  for some unitary $W$ on $H.$
\end{lemma}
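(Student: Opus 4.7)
The plan is to analyze $V$ as a block operator matrix $V = \begin{pmatrix} T & B \\ C & D \end{pmatrix}$ on $H \oplus H$. The dilation condition fixes the $(1,1)$-entry as $T$, and the unitarity $V^*V = VV^* = I$ yields the four scalar (in operator form) relations
\begin{equation*}
C^*C = I - T^*T, \qquad BB^* = I - TT^*, \qquad T^*B + C^*D = 0,
\end{equation*}
together with two redundant ones. The key input is the strict contractivity of $T$: it ensures that $I-T^*T$ and $I-TT^*$ are invertible strictly positive operators, so both $(I-T^*T)^{1/2}$ and $(I-TT^*)^{1/2}$ are invertible.

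Next I would extract $B$ and $C$ via polar decomposition. Since $C^*C = I - T^*T$ is invertible, $C$ has trivial kernel and (being a square operator on a finite-dimensional space) full range; thus its polar decomposition reads $C = U_1 (I-T^*T)^{1/2}$ for a unitary $U_1$. Applying the analogous argument to $B^*$ gives $B = (I-TT^*)^{1/2} U_2^*$ for a unitary $U_2$. Substituting these into the off-diagonal equation $T^*B + C^*D = 0$ and using the intertwining identity $T^*(I-TT^*)^{1/2} = (I-T^*T)^{1/2} T^*$, one cancels the invertible factor $(I-T^*T)^{1/2}$ from the left to obtain $D = -U_1 T^* U_2$. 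A short calculation then confirms that the remaining relations $CC^* + DD^* = I$ and $B^*B + D^*D = I$ hold automatically from $U_1, U_2$ being unitary and the $C^*$-identity.

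Finally, to reduce the two unitary parameters $U_1, U_2$ to the single parameter $W$ appearing in the statement, I would conjugate $V$ by the unitary $I_H \oplus U_1$. This conjugation preserves the $(1,1)$-entry (hence the dilation property) and transforms the block matrix into
\begin{equation*}
\begin{pmatrix} T & (I-TT^*)^{1/2} U_2 U_1 \\ (I-T^*T)^{1/2} & -T^* U_2 U_1 \end{pmatrix}.
\end{equation*}
Setting $W := -U_2 U_1$ yields exactly the claimed normal form.

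The main conceptual point — and the only mild obstacle — is clarifying what "up to unitary equivalence" means: the reduction requires conjugating by a unitary of the form $I_H \oplus U$ so that the $(1,1)$-block remains $T$ (otherwise one cannot collapse $U_1, U_2$ into a single $W$). All other steps are straightforward block-matrix manipulations; finite-dimensionality is used only to upgrade the partial isometries from polar decomposition to genuine unitaries, and strict contractivity is used throughout to invert $(I - T^*T)^{1/2}$ and $(I-TT^*)^{1/2}$.
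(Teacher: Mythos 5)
Your proposal is correct and is essentially the paper's own argument: both extract the off-diagonal blocks via polar decomposition (with strict contractivity and finite-dimensionality upgrading the partial isometries to unitaries), use the intertwining identity $T(I-T^*T)^{1/2}=(I-TT^*)^{1/2}T$ on the remaining unitarity relation to pin down the $(2,2)$-block, and then conjugate by $I\oplus U_1$ to absorb the two unitary parameters into a single $W$. The only discrepancies are harmless sign/adjoint conventions in how $U_2$ and $W$ are labelled.
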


\begin{proof}

    Set $D _1=(I-T^*T)^\frac{1}{2}$ and $D _2=(I-TT^*)^\frac{1}{2}.$  As $T$ is
    a strict contraction $D_1, D_2$ are invertible. Let $$V=\left(
    \begin{array}{cc}
    T  & -T_{12}\\
    T_{21}  & T_{22}\\
    \end{array}
    \right)$$ be any unitary dilation of $T$ on $H\oplus H.$

    From the equation $V^*V=I=VV^*,$ we get $|T_{21}|=D _1$ and $|T^*_{12}|=D _2.$
Therefore from the polar decompositions of $T_{12}$ and $T_{21},$ we
get $T_{21}=U_1D _1$ and $T^*_{12}=U^*_2D _2$ for some unitaries
$U_1$ and $U_2.$ Equating $(1,2)$ entry of $VV^*$ to 0,  we get $TD
_1U^*_1=D _2U_2T^*_{22}.$ Note that $TD _1=D _2T.$ Therefore we get
$TU_1^*=U_2T^*_{22}.$ Hence $T_{22}=U_1T^*U_2.$ Now by direct
calculation we get that $$V=\left(
    \begin{array}{cc}
    I  & 0\\
    0  & U_1\\
    \end{array}
    \right)\left(
    \begin{array}{cc}
    T  & -D _2W\\
    D _1  & T^*W\\
    \end{array}
    \right)\left(
    \begin{array}{cc}
    I  & 0\\
    0  & U^*_1\\
    \end{array}
    \right)$$ where $W=U_2U_1.$

\end{proof}

\begin{lemma}\label{distance}
Let $G,H$ be two Hilbert spaces and    let $X,Y: G\to H$ be two
isometries with $\|X^*Y\|<1.$ Then identifying $H$ with $H\oplus 0,$
there is a unitary $U\in \mathbb B(H\oplus H)$ such that $UX=Y$ and
$$ d(U,\mathbb C) = \sup_{\|g\|=1}\sqrt{1-|\langle
Xg,Yg\rangle|^2}.$$
\end{lemma}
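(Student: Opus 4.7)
For any unitary $U \in \mathcal{B}(G \oplus G)$ with $UX = Y$ and any $\lambda \in \mathbb{C}$, evaluating $U - \lambda I$ on the unit vector $(Xm, 0)$ with $\|m\| = 1$ gives $\|(U - \lambda I)(Xm, 0)\|^2 = \|Ym - \lambda Xm\|^2$. Minimizing the right side over $\lambda$ yields $1 - |\langle Xm, Ym\rangle|^2$, so the inf-sup inequality
$$d(U, \mathbb{C}) = \inf_\lambda \|U - \lambda I\| \;\geq\; \sup_m \inf_\lambda \|Ym - \lambda Xm\|$$
gives $d(U, \mathbb{C}) \geq \sup_{\|m\|=1} \sqrt{1 - |\langle Xm, Ym\rangle|^2}$.

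\textbf{Upper bound, preprocessing.} Write $A = X^*Y$ for the strict contraction on $M$, and let $c \geq 0$ be the distance from $0$ to the (convex) numerical range of $A$; the target value is $r := \sqrt{1 - c^2}$. If $c = 0$ then $r = 1$, and any unitary $U$ with $UX = Y$ satisfies $d(U, \mathbb{C}) \leq \|U\| = 1 = r$, matching the lower bound. Assume $c > 0$; by convexity there is a closest point $ce^{i\omega}$ to $0$, and the numerical range of $A$ lies in $\{z : \mathrm{Re}(e^{-i\omega}z) \geq c\}$. Replacing $Y$ by $e^{-i\omega}Y$ — which preserves $|\langle Xm, Ym\rangle|$ and can be undone at the end by multiplying the constructed unitary by $e^{i\omega}$ — we may assume $A + A^* \geq 2cI_M$.

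\textbf{Construction via Choi--Li.} By Theorem \ref{ChoiLi} there is a unitary dilation $V_0 \in \mathcal{B}(M \oplus M)$ of $A$ with $V_0 + V_0^* \geq 2cI$. By Lemma \ref{formofunitary}, $V_0 = \begin{pmatrix} A & -D_{A^*}W \\ D_A & A^*W \end{pmatrix}$ for some unitary $W$. The positivity bound together with the normality of $V_0$ forces $\sigma(V_0) \subset \{e^{i\theta} : \cos\theta \geq c\}$, an arc of half-angle $\arccos c$. Using the orthogonal decomposition $Y = XA + V_Z D_A$, where $V_Z = (I - XX^*)YD_A^{-1} : M \to X(M)^\perp$ is an isometry (invertibility of $D_A$ uses $\|A\| < 1$), identify $X(M) \oplus V_Z(M) \subset G$ with $M \oplus M$ via $(m_1, m_2) \mapsto Xm_1 + V_Z m_2$, transport $V_0$ across this identification, and extend by the identity on the orthogonal complement in $G$ to obtain a unitary $T \in \mathcal{B}(G)$. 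A direct check gives $TXm = XAm + V_Z D_A m = Ym$, so $TX = Y$; the spectrum $\sigma(T) = \sigma(V_0) \cup \{1\}$ still lies in the arc (as $\cos 0 = 1 \geq c$). Since $T$ is normal with spectrum in this arc, the minimal enclosing disk — centered at $c$ on the real axis with radius $\sqrt{1 - c^2}$ — gives $d(T, \mathbb{C}) \leq r$. Setting $U = (e^{i\omega}T) \oplus (e^{i\omega}T) \in \mathcal{B}(G \oplus G)$ produces a unitary with $UX = Y$ and $d(U, \mathbb{C}) \leq r$, which combined with the lower bound yields equality.

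The principal obstacle is producing a dilation whose spectrum is confined to the sharp arc $\{\cos\theta \geq c\}$; this is exactly what Theorem \ref{ChoiLi} delivers via the positivity condition. The strict contraction hypothesis $\|X^*Y\| < 1$ is essential both for the isometric splitting $Y = XA + V_Z D_A$ (via invertibility of $D_A$) and for invoking Lemma \ref{formofunitary}.
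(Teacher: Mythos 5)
Your lower bound and the core of your construction follow the paper's own argument closely: the same supporting-line rotation to achieve $A+A^*\geq 2cI$, the same orthogonal splitting $Y=XA+V_ZD_A$ (your $V_Z$ is exactly the paper's isometry $C=(Y-XT)\Delta^{-1}$), and the same spectral-arc/enclosing-disk estimate for a unitary with $U+U^*\geq 2cI$. One small repair first: Lemma \ref{formofunitary} gives the standard form only \emph{up to unitary equivalence} by $\mathrm{diag}(I,U_1)$, so you should replace $V_0$ by $\mathrm{diag}(I,U_1^*)\,V_0\,\mathrm{diag}(I,U_1)$, noting that this conjugation preserves both the $(1,1)$ corner $A$ and the inequality $V_0+V_0^*\geq 2cI$. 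That is a one-line fix.

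The genuine gap is that Lemma \ref{formofunitary} is stated and proved only for \emph{finite-dimensional} $H$: its proof extracts the unitaries $U_1,U_2$ from polar decompositions, and in infinite dimensions the partial isometry in a polar decomposition need not be unitary. The conclusion actually fails there --- for $T=0$ on $\ell^2$ one can build a unitary dilation whose $(2,1)$ entry is the unilateral shift, which is not $U_1\Delta_1$ for any unitary $U_1$. Since in the intended application (Theorem \ref{upperbound}) $M$ is the Hilbert space on which the injective range algebra acts, and is typically infinite-dimensional, you cannot invoke Lemma \ref{formofunitary} as you do. The paper closes exactly this gap with a second case: it runs your finite-dimensional argument on each finite-dimensional subspace $F\subset M$ to get unitaries $U_F$ with $U_FX|_F=Y|_F$ and $U_F+U_F^*\geq 2cI$, extracts a WOT-convergent subnet to obtain a contraction $\tilde U$ on $G$ with $\tilde UX=Y$ and $\tilde U+\tilde U^*\geq 2cI$, and then applies Choi--Li a second time to dilate $\tilde U$ to a unitary on $G\oplus G$ retaining the positivity bound (which still maps $X$ to $Y$ because $X,Y$ are isometries and a dilation of a contraction preserves vectors whose images have full norm). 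Your proof needs this approximation step, or some substitute, to cover infinite-dimensional $M$.
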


\begin{proof}

    It follows from Lemma \ref{Ad}, that   any unitray $U$ with $UX=Y$ will satisfy $$ d(U,\mathbb C)
\geq \sup_{\|g\|=1}\sqrt{1-|\langle Xg,Yg\rangle|^2}.$$ Set
$T=X^*Y.$ Let $ \Delta :=\overline{W(T)}$ be the closure of the
numerical range of $T.$ If $0\in \Delta ,$ then
$d(U,\mathbb C)=1$ and the lemma follows easily. Therefore assume
$0\notin \Delta .$
    Note that $\Delta $ is a compact convex non-empty subset of $\mathbb C.$ Let $\lambda =re^{i\theta }$ be
    the unique point in $\Delta $ such that $|\lambda | = \inf \{|z|: z\in \Delta \}$. Replacing $Y$ by $e^{-i\theta  }Y,$
    we may assume without loss of generality, $\lambda=r>0.$
    Observe that $$ \sqrt{1-r^2}=\sup_{\|g\|=1}\sqrt{1-|\langle Xg,Yg\rangle|^2}.$$ If $r=1$
    then $\langle g, X^*Yg\rangle = \langle g, g\rangle$ for every $g\in G$ and then $X=Y$. In such a
     case we may take $U=I$, and we are
 done.  Therefore assume $0<r<1.$ Consider
    the vertical line $L_r:=\{ z\in \mathbb C :~~\mbox{ Re }~ ( z)  =r\} $ in the complex plane. Note that this line is
    tangent to the circle centered at $(0,0)$ and radius $r.$ Therefore the
    convexity of $\Delta $ would implies that $\Delta $ can not have any point
    to the left of this line. Therefore $T+T^*\geq 2r.$

    \textbf{Case 1: $G$ is finite dimensional.}
    By Theorem \ref{ChoiLi} and Lemma \ref{formofunitary} there is a unitary $V$ on $G\oplus G$ of the form
    $$ V=\left(
    \begin{array}{cc}
    T  & -(I-TT^*)^{\frac{1}{2}}W\\
    (I-T^*T)^{\frac{1}{2}}  & T^*W\\
    \end{array}
    \right),
    $$ with $W\in \mathbb B(G)$ chosen such a way, we get $V+V^*\geq 2r.$ As $\|T\|<1,$ the operator
     $D_1=(I-T^*T)^{\frac{1}{2}}$ is invertible.
    Define $C=(Y-XT)D_1^{-1}.$  We see that $C^*C=I$ and $X^*C=0.$ In particular, range of $X$ and range of $C$
    are orthogonal.   Taking $H_0:= (X(G)\oplus C(G))^{\perp}, $ decompose $H$ as
    $H=X(G)\oplus C(G)\oplus H_0.$ Define $\tilde{U}|_{H_0}=I$ and on ${H_0}^\bot,$ via the following
    unitary  $$ \tilde{U}_{H_0^{\perp}}= \left(
    \begin{array}{cc}
    X & 0\\
    0 & C\\
    \end{array}
    \right)V \left(
    \begin{array}{cc}
    X^*& 0\\
    0  & C^*\\
    \end{array}
    \right).$$
     Set $U$ on $H\oplus H$ by $U=\tilde{U}\oplus I.$ We see that $\tilde{U}X=Y,$ $UX=Y$ and $\tilde{U}\oplus
      \tilde{U}^*\geq 2r,$ $U+U^*\geq 2r.$ This implies $\sigma(U)$ is to
the right side of the line $L_r$.  As $\sqrt{1-r^2}>1-r,$ we
    see that the circle centred at $(r,0)$ and radius $\sqrt{1-r^2}$ covers $\sigma(U).$
    Therefore $d(U,\mathbb C)\leq \sqrt{1-r^2}.$

    \textbf{Case 2: $G$ is an arbitrary Hilbert space.}

    Let $F$ be any finite dimensional subspace of $G.$ Consider $X_F=X|_F,$ $Y_F=Y|_F,$ $T_F=X^*_FY_F.$ Note
that $T_F+T^*_F\geq 2r.$ By the finite dimensional result, there is
a unitary $U_F\in \mathbb B(H)$ such that $U_FX_F=Y_F$ and
$U_F+U^*_F\geq 2r.$ Since the set of all finite dimensional
subspaces is a directed set under inclusion, the bounded net
$\{U_F\}$ has a WOT convergent subnet to a contraction $\tilde{U}.$
Note that $\tilde{U}+\tilde{U}^*\geq 2r.$  For any $F_1\supset F,$
we get $U_{F_1}Xg=U_{F_1}X_{F_1}g=Y_{F_1}g=Yg.$ Since, the set of
all finite dimensional subspaces containing $F$ is cofinal, we get
$\tilde{U}X=Y.$ Applying Theorem \ref{ChoiLi}, we get a unitary
dilation $U$ of $\tilde{U}$ on $H\oplus H$ with $U+U^*\geq 2r.$ As
$X$ is an isometry, $UX=Y.$ Further as $U+U^*\geq 2r,$ we get
$d(U,\mathbb C)\leq \sqrt{1-r^2}.$
\end{proof}

\begin{lemma}\label{subfamily}
    Let $\mathcal A,\mathcal B$ be $C^*$ algebras. Let $\phi _1, \phi_2\in \mbox{UCP}(\mathcal A,\mathcal B).$ Then $$\beta(\phi _1,\phi _2)=\inf_{\{(\mathcal E,X,Y) ~:~
      \|\langle X,Y\rangle\|<1\}}\|X-Y\|$$ where $(\mathcal E,X,Y)$ is a common representation module
      for $\phi _1,\phi _2$ with $\|\langle X,Y\rangle\| <1.$
\end{lemma}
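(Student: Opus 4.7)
The inequality $\ge$ is immediate since the right-hand side is an infimum over a subfamily of the common representation modules appearing in the definition of $\beta(\phi_1,\phi_2)$. For the reverse inequality, the plan is: given an arbitrary common representation module $(\mathcal{E},X,Y)$ for $(\phi_1,\phi_2)$ and $\varepsilon>0$, produce a common representation $(\mathcal{E}',X',Y')$ satisfying $\|\langle X',Y'\rangle\|<1$ with $\|X'-Y'\|<\|X-Y\|+\varepsilon$.

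The construction I would use is the following: fix any GNS module $(\mathcal{F},z)$ for $\phi_2$, so that $\langle z,az\rangle=\phi_2(a)$ for every $a\in\mathcal{A}$. Form the direct sum $\mathcal{E}':=\mathcal{E}\oplus\mathcal{F}$ with the diagonal left $\mathcal{A}$-action, and for a small parameter $\theta\in(0,\pi/2)$ set
$$
X':=X\oplus 0,\qquad Y':=(\cos\theta)\,Y\;\oplus\;(\sin\theta)\,z.
$$
A direct check gives $\langle X',aX'\rangle=\langle X,aX\rangle=\phi_1(a)$ and $\langle Y',aY'\rangle=\cos^2\theta\,\phi_2(a)+\sin^2\theta\,\phi_2(a)=\phi_2(a)$, so $(\mathcal{E}',X',Y')$ is a common representation. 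Moreover
$$
\langle X',Y'\rangle=\cos\theta\cdot\langle X,Y\rangle,
$$
which gives $\|\langle X',Y'\rangle\|\le\cos\theta<1$, so this tuple lies in the restricted family.

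For the distance, expanding yields
$$
\langle X'-Y',X'-Y'\rangle=\phi_1(1)+\phi_2(1)-2\cos\theta\cdot\mathrm{Re}\,\langle X,Y\rangle,
$$
so
$$
\langle X'-Y',X'-Y'\rangle-\langle X-Y,X-Y\rangle=2(1-\cos\theta)\,\mathrm{Re}\,\langle X,Y\rangle,
$$
which tends to $0$ in $\mathcal{B}$ as $\theta\to 0^+$. Hence $\|X'-Y'\|\to\|X-Y\|$, and we may pick $\theta$ so small that $\|X'-Y'\|<\|X-Y\|+\varepsilon$. Taking infimum over $(\mathcal{E},X,Y)$ and letting $\varepsilon\to 0$ gives the reverse inequality.

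There is no real obstacle; the only design issue is finding a perturbation that simultaneously preserves both GNS-type identities and shrinks $\|\langle X',Y'\rangle\|$ strictly below $1$ while adding only $O(1-\cos\theta)$ to the squared distance. Grafting a copy of a GNS representation of $\phi_2$ onto the $\mathcal{F}$-summand of $Y$ accomplishes all three at once: the convex combination keeps $Y'$ representing $\phi_2$, the $X$-component of $Y'$ is scaled by $\cos\theta$ (killing the possibility of $\|\langle X',Y'\rangle\|=1$), and $X'$ is unchanged on the $\mathcal{E}$-summand so $\phi_1$ is trivially preserved.
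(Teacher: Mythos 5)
Your proof is correct and follows essentially the same route as the paper: the paper takes $\tilde{\mathcal E}=\mathcal E\oplus\mathcal E$, $X_r=X\oplus 0$, $Y_r=rY\oplus\sqrt{1-r^2}\,Y$ and lets $r\to 1$, which is your construction with $r=\cos\theta$ and the second summand specialized to a second copy of $(\mathcal E,Y)$ rather than an arbitrary GNS module for $\phi_2$. The computations $\langle X_r,Y_r\rangle=r\langle X,Y\rangle$ and $\|X_r-Y_r\|\to\|X-Y\|$ match yours exactly.
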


\begin{proof}

Let $(\mathcal E,X,Y)$ be a common
    representation module for $(\phi _1,\phi _2)$.  For $0<r<1$, take $\tilde{\mathcal E}=
    \mathcal E\oplus \mathcal E,$ $X_r=X\oplus 0,$ $Y_r=rY\oplus \sqrt{1-r^2}Y.$ Then
    $(\tilde{\mathcal E},X_r,Y_r)$ is a common representation module for $\phi _1,\phi _2.$
Further, $\| \langle X_r, Y_r\rangle \| = $ $ r\| \langle X,Y\rangle \|\leq r <1.$  Also
$\lim _{r\to 1} \|  X_r-Y_r\| = \| X-Y\|.$ Hence,
$$\inf_{\{(\mathcal E,X,Y) ~:~
      \|\langle X,Y\rangle\|<1\}}\|X-Y\| = \inf_{\{(\mathcal E,X,Y)\}}\|X-Y\| = \beta (\phi _1, \phi _2).$$

\end{proof}

\begin{lemma}\label{real is mod}
    Let $\mathcal A,\mathcal B$ be unital $C^*$ algebras. Let $\mathcal B\subseteq \mathbb B(G).$
    Let $\phi _1,\phi _2 \in \mbox{UCP}(\mathcal A,\mathcal B).$
     Then $$\beta (\phi _1,\phi _2)=\inf_{(\mathcal E,X,Y)} \sup_{g\in G,\|g\|=1}\sqrt{2}\sqrt{1-|\langle g,
     \langle X,Y\rangle g \rangle|} $$ where $(\mathcal E,X,Y)$ is a common representation module
     for $\phi _1,\phi _2.$
\end{lemma}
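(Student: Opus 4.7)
The plan is to prove the two inequalities separately. For the easy direction $\beta(\phi_1,\phi_2)\geq \inf_{(\mathcal E,X,Y)}\sup_m\sqrt{2}\sqrt{1-|\langle m,\langle X,Y\rangle m\rangle|}$, I fix a common representation module $(\mathcal E,X,Y)$ and use unitality to write $\langle X-Y,X-Y\rangle = 2\cdot 1_{\mathcal B}-\langle X,Y\rangle-\langle X,Y\rangle^{*}$. This is a positive element of $\mathcal B\subset \mathcal B(M)$ with norm $\sup_{\|m\|=1}(2-2\mathrm{Re}\langle m,\langle X,Y\rangle m\rangle)$; since $\mathrm{Re}(z)\leq|z|$, this is at least $2\sup_m(1-|\langle m,\langle X,Y\rangle m\rangle|)$, so taking square roots gives $\|X-Y\|\geq \sup_m\sqrt{2}\sqrt{1-|\langle m,\langle X,Y\rangle m\rangle|}$. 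Taking the infimum over common representation modules then gives this direction.

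For the reverse inequality, the key observation is that for a unimodular scalar $\mu\in\mathbb C$, the vector $\mu X\in\mathcal E$ still represents $\phi_1$ (the factor $|\mu|^{2}=1$ cancels in $\langle\mu X,a\mu X\rangle$), while $\langle\mu X,Y\rangle=\bar\mu\langle X,Y\rangle$. I fix a common representation module $(\mathcal E,X,Y)$, set $T=\langle X,Y\rangle\in \mathcal B\subset \mathcal B(M)$, and let $r=\inf_{\|m\|=1}|\langle m,Tm\rangle|$; this is exactly the distance from the origin to the closure of the numerical range $W(T)$, which is a closed convex subset of $\mathbb C$.

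If $r>0$, I would pick a point $z_{0}=re^{i\theta}\in\overline{W(T)}$ closest to the origin and set $X'=e^{i\theta}X$. Then $\overline{W(e^{-i\theta}T)}$ is a closed convex set whose closest point to the origin is $r$ on the positive real axis, and the supporting hyperplane theorem forces it into the half-plane $\{\mathrm{Re}(z)\geq r\}$. Hence $\inf_m \mathrm{Re}\langle m,\langle X',Y\rangle m\rangle\geq r$, which gives $\|X'-Y\|^{2}\leq 2(1-r)=\sup_m 2(1-|\langle m,Tm\rangle|)$ as required. If $r=0$ the target upper bound is already $\sqrt{2}$, and the direct sum of minimal GNS modules with $X_{1}\oplus 0$ and $0\oplus Y_{2}$ trivially achieves $\|X_{1}\oplus 0-0\oplus Y_{2}\|=\sqrt{2}$. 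Taking the infimum over $(\mathcal E,X,Y)$ then completes the proof. The only mild obstacle is the $r=0$ situation, which occurs precisely when $0$ lies in the interior of $\overline{W(T)}$, so that no rotation of $X$ can push the numerical range into a closed right half-plane; but in exactly this regime the naive direct-sum module already attains the target bound $\sqrt{2}$.
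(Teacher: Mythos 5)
Your proof is correct and takes essentially the same route as the paper's: the first inequality from $\langle X-Y,X-Y\rangle=2-2\,\mathrm{Re}\langle X,Y\rangle$ together with $\mathrm{Re}(z)\le|z|$, and the reverse by rotating $X$ by the phase of the point of $\overline{W(\langle X,Y\rangle)}$ nearest the origin and using convexity of the numerical range (the paper packages this as a proof by contradiction, which lets it assume $0\notin\overline{W(T)}$ outright instead of treating $r=0$ separately). The only blemish is cosmetic: $r=0$ occurs precisely when $0\in\overline{W(T)}$, not when $0$ is interior to it, but your trivial bound $\beta\le\sqrt{2}$ covers that case regardless.
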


\begin{proof}
    Let $(\mathcal E,X,Y)$ be as above and define $$\beta '(\phi _1, \phi _2)=
    \inf_{(\mathcal E,X,Y)} \sup_{\|m\|=1}\sqrt{2}\sqrt{1-|\langle m,
     \langle X,Y\rangle m \rangle|}. $$
     As $\phi _1, \phi _2$ are unital, $\langle X,X \rangle=1=\langle Y,Y\rangle$ and hence,
     $\langle (X-Y), (X-Y)\rangle =2(1-\mbox{Re}(\langle X,Y\rangle ) ).$ So
    \begin{eqnarray*}
    \| X-Y \| ^2 &= & \sup _{g=1} \langle g, \langle (X-Y),(X-Y)\rangle g\rangle \\
    &=& \sup _{g=1} \langle g, 2(1-\mbox{Re}~\langle X,Y\rangle)g\rangle \\
   &=& \sup_{g=1} 2 (1- \langle g, \mbox{Re}~\langle X,Y \rangle )g\rangle \\
   &\geq & \sup_{g=1} 2(1-|\langle g, \langle X,Y \rangle g\rangle |)\\
   \end{eqnarray*}
    Consequently  $\beta (\phi _1,\phi _2)\geq \beta ' (\phi _1,\phi _2) .$
    Suppose the equality does not hold,  then there is a positive number $0<t<\sqrt{2}$ such that $\beta (\phi _1,\phi _2)>t>
     \beta ' (\phi _1,\phi _2).$  We will arrive at a contradiction.
     Let $(\mathcal E,X,Y)$ be a common representation module for $(\phi _1,\phi _2)$ such that
     $$ \sup_{\|g\|=1}\sqrt{2}\sqrt{1-|\langle g, \langle X,Y\rangle g \rangle|} < t.$$ Set
      $T=\langle X,Y \rangle.$ Let $\Delta :=\overline{W(T)}$ be the closure of numerical range of the
      operator $T.$ Note that $\Delta $ is a compact convex non-empty subset of $\mathbb C.$ Note that
       as $t <\sqrt{2},$ we have $0\notin \Delta .$
       Let $\lambda =re^{i\theta }$ be
       the unique point in $\Delta $ such that $|\lambda | = \inf \{|z|: z\in \Delta \}$. Set $\tilde{X}=Xe^{i\theta}.$ Then $(\mathcal E,\tilde{X},Y)$ is
         a common representation module for $(\phi _1,\phi _2).$ The convexity of $W(T)$
         implies \begin{eqnarray*} \|\tilde{X}-Y\| &=& \sup_{\|g\|=1}\sqrt{2}\sqrt{1-\mbox{Re}(\langle g, \langle
         \tilde{X},
         Y\rangle g \rangle)}\\ &=& \sup_{\|g\|=1}\sqrt{2}\sqrt{1-|\langle g, \langle \tilde{X},Y\rangle
          g \rangle|} \\ &=& \sup_{\|g\|=1}\sqrt{2}\sqrt{1-|\langle g, \langle X,Y\rangle g \rangle|}.
           \end{eqnarray*} Therefore $\|\tilde{X}-Y\|<t.$ This implies $\beta (\phi _1,\phi _2)<t.$
           This is a contradiction.
\end{proof}

\begin{theorem} \label{upperbound}
        Suppose $\mathcal A$ is a unital $C^*$-algebra and $\mathcal B\subset \mathbb B(G)$ is an injective $C^*$-algebra. Suppose $\phi_1,\phi_2 \in \mbox{UCP}(\mathcal A, B).$ Then $$ \gamma(\phi_1,\phi_2)= \beta(\phi_1,\phi_2)\sqrt{4-\beta^2(\phi_1,\phi_2)}.$$
\end{theorem}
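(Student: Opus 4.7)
The plan is to prove both inequalities in $\gamma(\phi_1,\phi_2)=\beta(\phi_1,\phi_2)\sqrt{4-\beta^2(\phi_1,\phi_2)}$, paralleling the states proof (Theorem \ref{MainTheorem}) with Hilbert-module substitutes for the key inputs. I first reduce to $\mathcal{B}=\mathcal{B}(M)$: Corollary \ref{inject} gives $\gamma(\phi_1,\phi_2)=\gamma(\iota\circ\phi_1,\iota\circ\phi_2)$, and the analogous equality $\beta(\phi_1,\phi_2)=\beta(\iota\circ\phi_1,\iota\circ\phi_2)$ follows along the same template using the conditional expectation $\Psi:\mathcal{B}(M)\to\mathcal{B}$---one direction by passing $(\mathcal{E},X,Y)$ over $\mathcal{B}$ to $(\mathcal{B}(M,H),L_X,L_Y)$ over $\mathcal{B}(M)$ with $H=\mathcal{E}\odot M$, the other by redefining the $\mathcal{B}(M)$-valued inner product on a common representation module through $\Psi$. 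After this reduction, vectors in a Hilbert $\mathcal{B}(M)$-module can be identified with bounded maps $M\to H$, and unitality of $\phi_i$ forces them to be isometries.

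For the upper bound, I take any common representation module $(\mathcal{E},X,Y)$ with $\|\langle X,Y\rangle\|<1$ (a harmless restriction by Lemma \ref{subfamily}) and look at the isometries $L_X,L_Y:M\to H$. Lemma \ref{distance} yields a unitary $U$ on $H\oplus H$ with $UL_X=L_Y$ (into $H\oplus 0$) and
$$d(U,\mathbb{C})=\sup_{\|m\|=1}\sqrt{1-|\langle m,\langle X,Y\rangle m\rangle|^2}.$$
I build a joint representation tuple $(\mathcal{B}(M,H\oplus H),\sigma_1,\sigma_2,L_X\oplus 0)$ by $\sigma_1=\tilde{\pi}$ and $\sigma_2=\mathrm{Ad}_U\circ\tilde{\pi}$ with $\tilde{\pi}=\pi\oplus\pi$; the identities $\phi_i(a)=\langle L_X\oplus 0,\sigma_i(a)(L_X\oplus 0)\rangle$ are immediate from $U(L_X\oplus 0)=L_Y\oplus 0$. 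Since $\tilde{\pi}$ is a $*$-homomorphism,
$$\|\sigma_1-\sigma_2\|_{cb}=\|(\mathrm{id}-\mathrm{Ad}_U)\circ\tilde{\pi}\|_{cb}\le\|\mathrm{id}-\mathrm{Ad}_U\|_{cb}=2d(U,\mathbb{C}),$$
by Stampfli (as in Lemma \ref{Ad}). Taking infimum over common representations and invoking Lemma \ref{real is mod}---which in the form $\beta^2/2=1-\sup_{X,Y}\inf_{\|m\|=1}|\langle m,\langle X,Y\rangle m\rangle|$, combined with the arithmetic identity $1-(1-\beta^2/2)^2=\beta^2(4-\beta^2)/4$ and the monotonicity of $t\mapsto\sqrt{1-t^2}$ on $[0,1]$---yields $\gamma\le\beta\sqrt{4-\beta^2}$.

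For the lower bound, I start with an arbitrary joint representation tuple. The flip-unitary construction of Proposition \ref{non-empty} applied to $\mathcal{E}\oplus\mathcal{E}$ reduces us to the shape $(\mathcal{B}(M,H),\pi_1,\pi_2,V)$ with $\pi_2=U^*\pi_1 U$ for some unitary $U$ on $H$. Theorem \ref{Johnson} and Lemma \ref{attainment} give $X_0\in\pi_1(\mathcal{A})'$ with $\|\pi_1-\pi_2\|_{cb}=2\|U-X_0\|$ and the distance attained. If $X_0$ has non-trivial kernel or non-dense range then $\|U-X_0\|\ge 1$, so $\|\pi_1-\pi_2\|_{cb}\ge 2\ge\beta\sqrt{4-\beta^2}$ automatically (since $\beta\le\sqrt{2}$). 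Otherwise take the polar decomposition $X_0=V_0 P$ inside $\pi_1(\mathcal{A})'$; since $V_0$ commutes with $\pi_1$, the isometry $V_0^*UV$ is another Stinespring isometry for $\phi_2$, so $(\mathcal{B}(M,H),\pi_1,V,V_0^*UV)$ is a common representation for $(\phi_1,\phi_2)$. Applying Lemma \ref{difference} to the unit vectors $Vm$ and $V_0^*UVm\in H$ for each unit $m\in M$, then using $(\mathrm{Re}\,z)^2\le|z|^2$ and taking supremum over $m$, I obtain $\|V_0^*U-P\|\ge\sqrt{1-t_\ast^2}$ with $t_\ast=\inf_{\|m\|=1}|\langle m,V^*V_0^*UV m\rangle|$. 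Lemma \ref{real is mod} applied to this common representation gives $t_\ast\le 1-\beta^2/2$, and the same arithmetic identity closes the argument: $\|\pi_1-\pi_2\|_{cb}\ge\beta\sqrt{4-\beta^2}$, whence $\gamma\ge\beta\sqrt{4-\beta^2}$.

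The principal obstacle is the interface between Lemma \ref{difference}---a numerical-range bound naturally phrased with real parts of scalar inner products on $H$---and Lemma \ref{real is mod}, which encodes $\beta$ through moduli of $\mathcal{B}(M)$-valued inner products. The crossing is clean only because (a) the monotonicity of $t\mapsto\sqrt{1-t^2}$ on $[0,1]$ allows $\sup_m\sqrt{1-|t_m|^2}=\sqrt{1-(\inf_m|t_m|)^2}$, and (b) the phase rotation used in the proof of Lemma \ref{real is mod} has already extracted the optimum alignment, making the inequality $(\mathrm{Re}\,z)^2\le|z|^2$ lossless at the relevant infimum. A lesser technicality, already absorbed by Lemma \ref{distance}'s use of the Choi--Li dilation (Theorem \ref{ChoiLi}), is that the intertwining unitary lives on $H\oplus H$ rather than $H$, forcing the auxiliary joint module in the upper bound to be built over $\mathcal{B}(M,H\oplus H)$.
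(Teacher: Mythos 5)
Your proof is correct, and while the upper bound follows the paper's route essentially verbatim (reduce to $\mathcal B=\mathcal B(M)$, invoke Lemma \ref{distance} on a common representation with $\|X^*Y\|<1$, bound $\|\pi-U^*\pi U\|_{cb}$ by $2d(U,\mathbb C)$, and finish with Lemmas \ref{subfamily} and \ref{real is mod}), your lower bound is genuinely different. The paper obtains $\gamma\geq\beta\sqrt{4-\beta^2}$ by compressing to \emph{states}: it combines Proposition \ref{amplifiation} and Proposition \ref{composition} to get $\gamma(\phi_1,\phi_2)\geq\gamma(\omega\circ\psi_1,\omega\circ\psi_2)$ for vector states $\omega$, applies the already-proved state case (Theorem \ref{MainTheorem}), and then uses Proposition 6 of \cite{Con} to identify $\sup_\omega\beta(\omega\circ\psi_1,\omega\circ\psi_2)$ with $\beta(\phi_1,\phi_2)$. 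You instead run the Theorem \ref{MainTheorem} argument directly at the module level: double the module so the two left actions become $\mathrm{Ad}_U$-conjugate, apply Theorem \ref{Johnson} and Lemma \ref{attainment}, split into the degenerate case (where $\|U-X_0\|\geq 1\geq\tfrac12\beta\sqrt{4-\beta^2}$ suffices, so you can skip the paper's more delicate Case (i) analysis), take the polar decomposition $X_0=V_0P$ in $\pi_1(\mathcal A)'$, note that $V_0^*UV$ is again a Stinespring isometry for $\phi_2$, and feed Lemma \ref{difference} (applied fibrewise over unit vectors $m\in M$) into Lemma \ref{real is mod}; the arithmetic $1-(1-\beta^2/2)^2=\beta^2(4-\beta^2)/4$ then closes the bound. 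I checked this chain and it is sound. What your route buys is a self-contained argument that avoids the external input from \cite{Con} and the ampliation machinery, and it recovers the lower bound of Theorem \ref{MainTheorem} as the special case $M=\mathbb C$; what the paper's route buys is brevity, since it reuses the state theorem wholesale. Two cosmetic points: the doubling trick you want is the one from the proof of Theorem \ref{MainTheorem} rather than the flip unitary of Proposition \ref{non-empty} (which permutes the $x_i\mathcal B$ summands, not the two left actions); and Theorem \ref{Johnson} is stated for faithful $\pi$, but this is harmless (apply it to the identity representation of $\pi_1(\mathcal A)$), and the paper glosses over the same point.
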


\begin{proof}
    Let $\tilde{\phi_i}=\iota\circ\phi_i,$ $\iota:\mathcal B\rightarrow \mathbb B(G)$ inclusion map,
     $i=1,2.$ As $\mathcal B$ is injective, we get from Proposition 9, \cite{Con} that $\beta(\tilde{\phi}_1,\tilde{\phi}_2) =
     \beta(\phi_1,\phi_2)$ and from Theorem \ref{injective}, $\gamma(\tilde{\phi}_1,\tilde{\phi}_2) =
     \gamma(\phi_1,\phi_2).$ Therefore we may assume without loss of generality that $\mathcal B=
     \mathbb B(G).$

    We get from \ref{amplifiation} and \ref{composition}, that $$\gamma(\phi_1,\phi_2)\geq
    \gamma(\omega\circ(\phi_1\otimes \mbox{id}_{\mathbb B(G)}),\omega\circ(\phi_2\otimes \mbox{id}_{\mathbb B(G)})),$$
    for every $\omega\in G\otimes G,$ $\|\omega\|=1.$ Denoting $\psi_i=\phi_i\otimes \mbox{id}_{\mathbb B(G)},$
     $i=1,2,$ we get immediately from Theorem  \ref{MainTheorem}, $$ \gamma (\phi _1,\phi _2)\geq
     \sup_{\omega\in
      G\otimes G,\|\omega\|=1}\beta(\omega\circ\psi_1,\omega\circ\psi_2)\sqrt{4-\beta^2(
      \omega\circ\psi_1,\omega\circ\psi_2)}.$$ Note that from Proposition 6, \cite{Con}, we get
     $$ \sup_{\omega\in G\otimes G,\|\omega\|=1}\beta(\omega\circ\psi_1,\omega\circ\psi_2) = \beta (\phi_1,\phi_2).$$ Therefore $$\gamma(\phi_1,\phi_2) \geq \beta(\phi_1,\phi_2)\sqrt{4-\beta^2(\phi_1,\phi_2)}.$$ Let us prove the reverse inequality.
      Let $(\tilde{\pi},H, X,Y)$ be a common representation for $\phi_1,\phi_2$ satisfying $\|X^*Y\|<1.$ I.e. $\tilde{\pi}:\mathcal A\rightarrow
     \mathbb B(H)$ is a representation, $X,Y:G\rightarrow H$ isometries with $\phi_1(\cdot)=X^*\pi(\cdot)X$ and $\phi_2(\cdot)=Y^*\pi(\cdot)Y.$ Set
    $K=H\oplus H.$ Identify $H$ with $H\oplus 0.$ Set $\pi:\mathcal A\rightarrow \mathbb B(K)$ by
    $\pi=\tilde{\pi}\oplus \mbox{id}.$ Then $(\pi,K, X,Y)$ is a common representation for $\phi_1,\phi_2.$ Let
    $U\in \mathbb B(K)$ be a unitary as in Lemma \ref{distance}. Therefore $(K,\pi,U^*\pi U,X)$ is a joint
    representation module for $(\phi_1,\phi_2).$ Now from Theorem \ref{Johnson} and Lemma \ref{distance}, we get
    \begin{eqnarray*}
    \gamma(\phi_1,\phi_2)&\leq& \|\pi-U^*\pi U\|_{cb} \\ &=& 2 d(U,\pi(\mathcal A)^\prime) \\
    &\leq& 2 d(U,\mathbb C) \\ &=& 2\sup_{\|g\|=1}\sqrt{1-|\langle Xg,Yg\rangle|^2}.
    \end{eqnarray*}

     Set $s=\sup_{\|g\|=1}\sqrt{2}\sqrt{1-|\langle g, \langle X,Y\rangle g \rangle|}.$ Now observe that
      $$ s\sqrt{4-s^2}=2\sup_{\|g\|=1}\sqrt{1-|\langle Xg,Yg\rangle|^2}.$$
Observe  that $x\mapsto x\sqrt{4-x^2},$ is an increasing function on
the interval $[0,\sqrt{2}],$      Now as
     $(\tilde{\pi},H,X,Y)$ is an arbitrary common representation space for $\phi_1,\phi_2$
     satisfying $\|X^*Y\|<1$, from Lemma \ref{subfamily} and Lemma \ref{real is mod},
     we conclude $\gamma(\phi_1,\phi_2)\leq \beta(\phi_1,\phi_2)\sqrt{4-\beta^2(\phi_1,\phi_2)}.$

\end{proof}

\section{Examples}

In this section we explore the dependence of the representation
metric $\gamma $ on the range algebra. We see that, when the range
algebra is not injective the relationship between $\beta $ and
$\gamma $ may fail. The examples draw upon ideas from \cite{BhS}.

\begin{example}
Let  $H$ be a separable  infinite dimensional Hilbert space. Let
$\mathcal K$ denote the set of all compact operators on $H.$ Set
$\mathcal B = \mathcal K_+:=\mbox{span}\{\mathcal K, \mathbb C I_H
\}$,  the unital $C^*$-algebra generated by compact operators. Let
$u\in \mathbb B(H)$ be a unitary of the form $u= \lambda
p+\bar{\lambda}(1-p)$, where $p$ is a projection such that $p$ and
$(1-p)$ have infinite rank and $\lambda = e^{i\theta }$ for some
$0<\theta < \frac{\pi}{2}.$ Clearly $u$ is not in $\mathcal B. $
Define unital $*$-automorphisms $\psi_1,\psi_2$ of $\mathcal B$ by
$\psi_1(a)=u^*au, \psi_2(a)=a.$ Let $\iota : \mathcal B \to {\mathbb
B}(H)$ be the inclusion map and let $\tilde {\psi }_j = \iota \circ
\psi _j$ for $j=1,2.$

Then from Example 3.2 of  \cite{BhS}, we get $\beta(\psi_1,\psi_2)=\sqrt{2}.$ Now
observe that $\mathcal B$ is a $\mathcal B$ right-module with
natural action and define adjointable left actions
$\sigma_1(a)=u^*au$ and $\sigma_2(a)=a.$ Now note that $(\mathcal
E,1,\sigma_1,\sigma_2)$ is a joint representation module for
$\psi_1,\psi_2.$ Therefore
$$\gamma(\psi_1,\psi_2) =
\|\sigma_1-\sigma_2\|_{cb}=\|\psi_1-\psi_2\|_{cb}= 2d(u,\mathbb
C).$$ Therefore $$\gamma(\psi_1,\psi_2)=|\lambda-\bar{\lambda}| <2 =
\beta(\psi_1,\psi_2)\sqrt{4-\beta^2(\psi_1,\psi_2)}.$$ On the other
hand, as $\beta(\tilde{\psi}_1,\tilde{\psi}_2)=
\sqrt{2}(1-\mbox{Re}\lambda)^{\frac{1}{2}}.$ We get
$$\gamma (\tilde{\psi }_1, \tilde {\psi }_2)= \beta(\tilde{\psi}_1,\tilde{\psi}_2)\sqrt{4-\beta^2(\tilde{\psi}_1,\tilde{\psi}_2)}=
2\sqrt{1-(\mbox{Re}\lambda)^2}=2d(U,\mathbb
C)=\gamma(\psi_1,\psi_2).$$  \end{example}

In the previous example, the representation metric did not change by the
inclusion map. However, it is not always the case. To see this, we
need a slightly more delicate example.

\begin{example} Let $H$ be a separable infinite dimensional Hilbert
space. Let $\mathcal B=\mathcal K _+$ be as in previous Example. Let
$\mathcal A\subset \mathbb B(H\oplus H)$ be the $C^*-$algebra:
$$\mathcal A=\{ \left(
\begin{array}{cc}
X  & Y\\
Z  & W\\
\end{array}
\right) : X,W \in \mathcal K _+, Y,Z \in \mathcal K\} .$$ Let $p$ be
a projection on $H$ such that range of $p$ and $1-p$ are both
infinite dimensional subspaces of $H.$

Let $0<\theta<\frac{\pi}{4}.$ Set $$u :=
e^{i\theta}p+e^{-i\theta}(1-p).$$ Then $u$ is a unitary and  $u\notin
\mathcal K _+.$ Let $$z_1=\frac{1}{\sqrt{2}}\left(
\begin{array}{c}
I\\
I\\
\end{array}
\right),~~z_2=\frac{1}{\sqrt{2}}\left(
\begin{array}{c}
u\\
I\\
\end{array}
\right).$$ Define unital CP maps $\phi_i:\mathcal A\rightarrow
\mathcal B,$ by $\phi_i(a)=z^*_iaz_i, a\in \mathcal A, i=1,2.$

Let $\iota:\mathcal B\rightarrow \mathbb B(H)$ be the inclusion
map. Let $\tilde{\phi_i}=\iota\circ \phi_i, i=1,2.$ As $\mathbb
B(H)$ is injective, we have
$$\gamma(\tilde{\phi_1},\tilde{\phi_2})=\beta(\tilde{\phi}_1,\tilde{\phi}_2)\sqrt{4-\beta^2(\tilde{\phi}_1,\tilde{\phi}_2)}.$$
Set $G=H\oplus H.$ To compute
$\beta(\tilde{\phi}_1,\tilde{\phi}_2),$ first note that $(G,id,z_i)$
is a Stinespring representation for $\tilde{\phi}_i, i=1,2.$ Any
operator $W\in \mathbb B(G)$ in commutant of the identity
representation of compact operators is of the form $W=\lambda I$
with $\lambda \in \mathbb C.$ Therefore
\begin{eqnarray*} \beta(\tilde{\phi}_1,\tilde{\phi}_2) &=&
\inf_{|\lambda|\leq 1} \|(z_1\oplus 0)-(\lambda z_2 \oplus
\sqrt{1-|\lambda|^2}z_2)\|^{\frac{1}{2}} \\ &=&  \inf_{|\lambda|\leq
1} \|2I-2 \mbox{Re} (\lambda z^*_1z_2)\|^{\frac{1}{2}} \\ &=&
\sqrt{2} \|I- \mbox{Re}(\lambda \frac{u+I}{2})\|^{\frac{1}{2}}.
\end{eqnarray*} After simple calculation, we observe that the
infimum is attained at $\lambda=1.$ Therefore
\begin{eqnarray*}\beta(\tilde{\phi}_1,\tilde{\phi}_2) &=&
\sqrt{2}\|I-\mbox{Re} (\frac{u+I}{2})\|^\frac{1}{2} \\ &=&
\|I-\mbox{Re} (u)\|^{\frac{1}{2}} \\ &=& \sqrt{1-\cos\theta}.
\end{eqnarray*} So
$$\gamma(\tilde{\phi},\tilde{\phi}_2)=\sqrt{3-2\cos\theta-\cos^2\theta}=\sqrt{(3+\cos\theta)(1-\cos\theta)}.$$

Let us now compute $\gamma(\phi_1,\phi_2).$ For that we consider
common representation modules $(\mathcal F,x_1,x_2)$ with unitary $U$, where
$\mathcal F$ is an $\mathcal A-\mathcal B$ bi-module, $x_i\in
S(\mathcal F,\phi_i), i=1,2$ and $U\in \mathcal B^a(\mathcal F)$
satisfies $Ux_1=x_2.$

Take ${ \mathcal K } _u= \mbox{span}~\{ \mathcal K , u\} $ and  $$ \mathcal E_1=\left(
\begin{array}{c}
\mathcal K_+\\
\mathcal K_+\\
\end{array}
\right), \mathcal E_2=\left(
\begin{array}{c}
{\mathcal K }_u\\
\mathcal K_+\\
\end{array}
\right) . $$ Then $\mathcal E_1,\mathcal E_2\subset \mathbb B(H,G)$
are $\mathcal A-\mathcal B$ bi-modules. Set $$\mathcal E:=\mathcal
E_1\oplus \mathcal E_2\subset \mathbb B(H,G\oplus G).$$ \end{example}

\begin{lemma}\label{identification}
Suppose $(\mathcal F,x_1,x_2)$ is a common representation module for
$\phi_1$ and $\phi_2.$ Then there is an $\mathcal A-\mathcal B$
bi-module $\mathcal G$ and a bilinear unitary $W\in \mathcal
B^a_{bil}(\mathcal F,\mathcal E\oplus \mathcal G)$ such that
$Wx_1=(z_1,0,0)$ and $Wx_2=(0,z_2,0).$
\end{lemma}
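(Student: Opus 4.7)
\emph{Proof plan.} The approach is to decompose $\mathcal{F}$ orthogonally into three sub-bi-modules matching the target decomposition $\mathcal{E}_1\oplus\mathcal{E}_2\oplus\mathcal{G}$, and to transport $x_1,x_2$ onto the canonical cyclic vectors $z_1,z_2$ via the uniqueness of the minimal Stinespring dilation. A preliminary observation is that $(\mathcal{E}_i,z_i)$ is itself a minimal Stinespring module for $\phi_i$. For $i=1$, writing $a=(a_{jk})\in\mathcal{A}$ with diagonal entries in $\mathcal{K}_+$ and off-diagonal entries in $\mathcal{K}$, one computes $az_1=\frac{1}{\sqrt{2}}\binom{a_{11}+a_{12}}{a_{21}+a_{22}}$; right multiplying by $\mathcal{B}=\mathcal{K}_+$ then fills out $\overline{\mathcal{A}z_1\mathcal{B}}=\mathcal{E}_1$. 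The identity $\mathcal{K}_+u=\mathbb{C}u+\mathcal{K}=\mathcal{K}_u$ together with the analogous calculation yields $\overline{\mathcal{A}z_2\mathcal{B}}=\mathcal{E}_2$. Combined with $z_i^{*}z_i=I$ and $\phi_i(a)=z_i^{*}az_i$, this makes $(\mathcal{E}_i,z_i)$ a minimal Stinespring module for $\phi_i$.

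Inside the common representation module $\mathcal{F}$, set $\mathcal{F}_i:=\overline{\mathcal{A}x_i\mathcal{B}}$. By construction $(\mathcal{F}_i,x_i)$ is a minimal Stinespring module for $\phi_i$, so uniqueness up to bilinear unitary equivalence yields bilinear unitaries $W_i:\mathcal{F}_i\to\mathcal{E}_i$ with $W_i(x_i)=z_i$. Since each $x_i$ is a unit vector, Proposition \ref{complemented} shows that $\mathcal{F}_i$ is orthogonally complemented as a sub-bi-module of $\mathcal{F}$.

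The crux is to verify that $\mathcal{F}_1$ and $\mathcal{F}_2$ are orthogonal inside $\mathcal{F}$, i.e.\ $\langle x_1,ax_2\rangle=0$ for every $a\in\mathcal{A}$. Such orthogonality is forced on any candidate $W$ of the required form, since a bilinear unitary preserves the $\mathcal{B}$-valued inner product while $\langle (z_1,0,0),(0,z_2,0)\rangle=0$ holds in $\mathcal{E}\oplus\mathcal{G}$. Once it is secured, $\mathcal{F}_1\oplus\mathcal{F}_2$ is an orthogonally complemented sub-bi-module; letting $\mathcal{G}:=(\mathcal{F}_1\oplus\mathcal{F}_2)^{\perp}$ and $W:=W_1\oplus W_2\oplus\mathrm{id}_{\mathcal{G}}$ then produces a bilinear unitary from $\mathcal{F}$ onto $\mathcal{E}\oplus\mathcal{G}$ satisfying $Wx_1=(z_1,0,0)$ and $Wx_2=(0,z_2,0)$. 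The main obstacle is thus the orthogonality claim itself, which I expect to follow from the rigidity that the specific structure of $\mathcal{A}$, $\mathcal{B}$, and the unitary $u$ imposes on the possible cross inner products in any Hilbert $\mathcal{A}$-$\mathcal{B}$ bi-module representing both $\phi_1$ and $\phi_2$ -- the sort of structural restriction already exploited in \cite{BhS}.
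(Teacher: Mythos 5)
You have correctly identified the skeleton of the argument and reduced the lemma to the single claim that $\overline{\mathcal A x_1\mathcal B}$ and $\overline{\mathcal A x_2\mathcal B}$ are orthogonal in $\mathcal F$, i.e.\ $\langle a'x_1, ax_2\rangle=0$ for all $a,a'\in\mathcal A$; your verification that $(\mathcal E_i,z_i)$ is the minimal Stinespring module for $\phi_i$ is correct and is exactly what the paper needs. But that orthogonality claim is the entire content of the lemma, and you do not prove it: you observe (correctly, but circularly) that it is a \emph{necessary consequence} of the desired conclusion, and then express the expectation that it follows from ``rigidity.'' That is a genuine gap. The paper's argument is concrete and uses two specific features of the example. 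Pass to the Hilbert space $M=\mathcal F\odot H$ and identify $K_1=\overline{\operatorname{span}}\,\mathcal Ax_1\odot H$ with $G$, so that with respect to $M=G\oplus G^{\perp}$ one has $\pi=\operatorname{id}\oplus\pi_0$, $x_1=(z_1,0)$ and $x_2=(w,v)$. Then $a\mapsto w^*aw$ is a CP map dominated by $\phi_2$, whose minimal Stinespring representation is $(G,\operatorname{id},z_2)$ with trivial commutant $\operatorname{id}(\mathcal A)'=\mathbb C I$ (because $\mathcal A$ contains all compacts on $G=H\oplus H$); the Radon--Nikodym argument therefore forces $w=cz_2$ for a scalar $c$ with $|c|\le 1$. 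Finally $\langle x_1,x_2\rangle=cz_1^*z_2=c\,\frac{u+I}{2}$ must lie in $\mathcal B=\mathcal K_+$, while $u\notin\mathcal K_+$, so $c=0$, whence $w=0$ and $ax_2=(0,\pi_0(a)v)$ is orthogonal to $a'x_1=(a'z_1,0)$. Neither step is generic: the first uses irreducibility of $\mathcal A$ acting on $G$, the second uses that the inner product is $\mathcal K_+$-valued while $z_1^*z_2$ involves the non-compact, non-scalar unitary $u$. Without an argument of this kind your text is a correct reduction, not a proof.

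A secondary, smaller issue: Proposition~\ref{complemented} only asserts that $x\mathcal B$ is complemented for a unit vector $x$; it does not give complementation of the larger submodule $\overline{\mathcal A x_i\mathcal B}$, and closed submodules of Hilbert $C^*$-modules are not complemented in general. The paper sidesteps this by working with the reducing decomposition $M=G\oplus G\oplus L$ of $\mathcal F\odot H$ obtained from the two identifications $K_i\cong G$, and reading off the splitting of $\mathcal F\subset\mathcal B(H,M)$ from there. If you want to keep your module-level phrasing, you should justify the complementation of $\mathcal F_1\oplus\mathcal F_2$ by some such device rather than by citing Proposition~\ref{complemented}.
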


\begin{proof}
Set $M=\mathcal F\odot H.$ Then $\mathcal F\subset \mathbb B(H,M).$ Define $\pi:\mathcal A\rightarrow \mathbb B(M)$ by $\pi(a)(e\odot h)=ae\odot h.$ Set $K_i=\overline{\mbox{span}}\{ax_i\odot h: a\in \mathcal A, h\in H \}.$ Then $K_i$ is a reducing subspace for $\pi.$ Define unitary $U_i:K_i\rightarrow G$ by $U_i(ax_i\odot h)=az_ih,$ $a\in \mathcal A, h\in H, i=1,2.$ Note that $\overline{\mbox{span}}\{az_ih:a\in \mathcal A,h\in H\}=G.$

Identifying $K_1$ with $G$ via unitary $U_1,$ we get $M=G\oplus G^\bot$ and for some representation $\pi _0$, $$\pi=\left(
\begin{array}{cc}
id  & 0\\
0  & \pi _0\\
\end{array}
\right),~x_1=\left(
\begin{array}{c}
z_1\\
0\\
\end{array}
\right),~ x_2=\left(
\begin{array}{c}
w\\
v\\
\end{array}
\right).$$

Now $z^*_2az_2=\phi_2(a)=\langle x_2,ax_2\rangle=w^*aw+v^*\pi^\bot(a)v.$    So $\psi:\mathcal A\rightarrow \mathbb B(G),$ defined by $\psi(a)=w^*aw$ is a CP map dominated by $\phi_2.$ It follows that $w=cz_2$ for some $c\in \mathbb C$, with $ |c|\leq 1.$ Now $\langle x_1,x_2\rangle=z^*_1w=cz^*_1z_2.$ By direct computation, $cz^*_1z_2=\frac{u+I}{2}\notin \mathcal A.$ Therefore $c=0.$ Hence $w=0$ and $\langle x_1,x_2\rangle=0.$ Also by direct computation, we get $\langle \pi(\mathcal A)x_1, \pi(\mathcal A)x_2\rangle=0.$

Similarly identifying $K_2$ with $G$ via unitary $U_2,$ we get $M=G\oplus G\oplus L$ and with some representation $\pi _1$,  $$ \pi=\left(
\begin{array}{ccc}
id  & 0 & 0\\
0   & id & 0\\
0  & 0 & \pi _1\\
\end{array}
\right),~x_1=\left(
\begin{array}{c}
z_1\\
0\\
0\\
\end{array}
\right),~ x_2=\left(
\begin{array}{c}
0\\
z_2\\
0\\
\end{array}
\right).$$

It follows that,  $\mathcal E\cap \mathbb B(H,G\oplus 0\oplus 0)=\mathcal E_1\oplus 0\oplus 0$ and $\mathcal E\cap \mathbb B(H,0\oplus G\oplus 0)=0\oplus \mathcal E_2\oplus 0.$
Consequently $$ \mathcal E = \mathcal E_1\oplus \mathcal E_2 \oplus \mathcal E_3,$$ where $\mathcal E_3=\mathcal E\cap \mathbb B(H,0\oplus 0\oplus L).$

\end{proof}

In view of Lemma \ref{identification}, we consider common representations of the form
$(\mathcal E\oplus \mathcal G, (z_1,0,0),(0,z_2,0))$ with unitary
$U\in \mathcal B^a(\mathcal E\oplus \mathcal G)$ with
$U(z_1,0,0)=(0,z_2,0).$

Let $P\in \mathcal B^a(\mathcal E\oplus \mathcal G)$ be the
projection onto $\mathcal E.$ Set $$V=PUP|_\mathcal E.$$ Then $V\in
\mathcal B^a(\mathcal E)$ is a contraction with $V(z_1,0)=(0,z_2).$
Let $\sigma:\mathcal A\rightarrow \mathcal B^a(\mathcal E\oplus
\mathcal G)$ be the left action. Note that $$\sigma=id\oplus id
\oplus \sigma_\mathcal G.$$ Therefore observe that
$$d(U,\sigma(\mathcal A)^\prime)\geq d(V,(id\oplus id)^\prime).$$

\begin{lemma}\label{adjointable}
    Let $\mathcal E_1,\mathcal E_2$ be as defined earlier. Then $$
    \mathcal B^a(\mathcal E_1,\mathcal E_2)=\left(
    \begin{array}{cc}
    {\mathcal K} _u & {\mathcal K} _u\\
    \mathcal K_+  & \mathcal K_+\\
    \end{array}
    \right)\subset \mathbb B(G).$$
\end{lemma}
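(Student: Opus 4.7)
The plan is to prove both inclusions by identifying adjointable bilinear maps $T\in\mathcal B^a(\mathcal E_1,\mathcal E_2)$ with left multiplications by matrices in $\mathcal B(G)$, using the embeddings $\mathcal E_i\subset \mathcal B(H,G)$ described in the preliminaries.

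For the forward inclusion $\subseteq$, I would exploit the fact that $\mathcal E_1$ contains the two unit vectors $e_1=\binom{I}{0}$ and $e_2=\binom{0}{I}$ (these lie in $\mathcal E_1$ because $I\in \mathcal K_+$), and that $\mathcal E_1=e_1\mathcal B\oplus e_2\mathcal B$. Therefore any right $\mathcal B$-linear $T$ is entirely determined by $T(e_1),T(e_2)\in\mathcal E_2$. Writing $T(e_j)=\binom{a_{1j}}{a_{2j}}$, the definition of $\mathcal E_2$ forces $a_{1j}\in\mathcal K_u$ and $a_{2j}\in\mathcal K_+$, and the computation $T\binom{b_1}{b_2}=T(e_1)b_1+T(e_2)b_2=A\binom{b_1}{b_2}$ exhibits $T$ as left multiplication by the matrix $A=(a_{ij})$, which is of the claimed form. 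Boundedness of $A$ is automatic since the entries $a_{ij}\in \mathcal B(H)$ have norms dominated by $\|T\|$.

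For the reverse inclusion $\supseteq$, I would verify directly that every matrix $A$ in the stated set defines an adjointable bilinear map. Well-definedness $A\mathcal E_1\subset\mathcal E_2$ reduces to checking entrywise that $\mathcal K_u\cdot\mathcal K_+\subset\mathcal K_u$ (since $(\alpha u+K)(\beta I+K')=\alpha\beta u+(\text{compact})$) and $\mathcal K_+\cdot\mathcal K_+\subset\mathcal K_+$. Adjointability amounts to showing that left multiplication by $A^*$ sends $\mathcal E_2$ into $\mathcal E_1$; the crucial ingredient is the identity $u^*u=I$, which yields $\mathcal K_u^*\cdot\mathcal K_u\subset\mathcal K_+$. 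Concretely, for $a_{11}=\alpha u+K$ and $c_1=\gamma u+K'\in\mathcal K_u$, the product $a_{11}^*c_1=\bar\alpha\gamma I+(\text{compact})$ lies in $\mathcal K_+$, which is exactly the top slot of $\mathcal E_1$. The remaining three entries of $A^*\binom{c_1}{c_2}$ are handled by similar (or easier) products, using that $\mathcal K$ is a two-sided ideal in $\mathcal B(H)$ and $\mathcal K_+$ is $*$-closed.

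The principal subtlety is this adjointability verification: the appearance of $\mathcal K_u$ (rather than just $\mathcal K_+$) in the upper row of the claimed description of $\mathcal B^a(\mathcal E_1,\mathcal E_2)$ rests precisely on the unitary cancellation $u^*u=I$ that collapses $\mathcal K_u^*\cdot\mathcal K_u$ back into $\mathcal K_+$. Once this observation is in hand, the two inclusions combine to give the stated equality.
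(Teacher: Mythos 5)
Your proof is correct and follows essentially the same route as the paper's, which simply tests a candidate operator $X\in\mathcal B(G)$ against the vectors $\left(\begin{smallmatrix}I\\0\end{smallmatrix}\right),\left(\begin{smallmatrix}0\\I\end{smallmatrix}\right)\in\mathcal E_1$ and concludes ``by direct computation.'' You supply the details the paper leaves implicit --- in particular the verification that left multiplication by $A^*$ maps $\mathcal E_2$ back into $\mathcal E_1$ via the cancellation $u^*u=I$, which is indeed the point that makes $\mathcal K_u$ (rather than $\mathcal K_+$) admissible in the top row.
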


\begin{proof}
    We observe that an operator $X\in \mathbb B(G)$ is in $ \mathcal B^a(\mathcal E_1,\mathcal E_2)$ if and only if $XE\in \mathcal E_2$ for every $E\in \mathcal E_1.$ As $$\left(
    \begin{array}{c}
    I\\
    0\\
    \end{array}
    \right), \left(
    \begin{array}{c}
    0\\
    I\\
    \end{array}
    \right)\in \mathcal E_1,$$ we get the result by direct computation.
\end{proof}

Decomposing $$\mathcal B^a(\mathcal E)=\left(
\begin{array}{cc}
\mathcal B^a(\mathcal E_1)  & \mathcal B^a(\mathcal E_2,\mathcal E_1)\\
\mathcal B^a(\mathcal E_1,\mathcal E_2)  & \mathcal B^a(\mathcal E_2)\\
\end{array}
\right),$$ and the fact that $V(z_1,0)=(0,z_2),$ $V^*(0,z_2)=(z_1,0),$ We get $$V=\left(
\begin{array}{cc}
*  & *\\
Z  & *\\
\end{array}
\right),$$  for some $Z\in \mathcal B^a(\mathcal E_1,\mathcal E_2)$ satisfying $Zz_1=z_2, Z^*z_2=z_1.$ Recalling the
choice of $z_1, z_2$, and the definitions of $\mathcal E_1,\mathcal E_2,$ we observe from Lemma \ref{adjointable}, $$Z=\left(
\begin{array}{cc}
au+k  & (1-a)u-k\\
(1-a)I-u^*k  & aI+u^*k\\
\end{array}
\right),$$ for some $k\in \mathcal K$ and $a\in \mathbb C.$ Now \begin{eqnarray*}
\|\sigma-U^*\sigma U\|_{cb} &=& 2d(U,\sigma(\mathcal A)^\prime)\\ &\geq& 2d(V,(id\oplus id)^\prime) \\ &=& 2d(\left(
\begin{array}{cc}
    *  & *\\
    Z  & *\\
\end{array}
\right),\left(
\begin{array}{cc}
\mathbb C I & \mathbb C I\\
\mathbb C I  & \mathbb C I\\
\end{array}
\right)) \\ &\geq& 2d(Z,\mathbb C).
\end{eqnarray*}

 As we have started with arbitrary common representation module, we get $$\gamma(\phi_1,\phi_2) \geq 2d(Z,\mathbb C).$$

 Set $$Z^\prime=\left(
 \begin{array}{cc}
 au  & (1-a)u\\
 (1-a)I  & aI+u\\
 \end{array}
 \right).$$ Therefore $Z=Z^\prime+K$ for some compact operator $K$ on $H\oplus H.$
Let $\{e_n\}$ be a sequence of orthonormal vectors such that
$e_{2n}\in \mbox{range}~p$ and $e_{2n+1}\in \mbox{range}~(1-p).$ Set
$f_n=e_n\oplus 0.$
 Given $\epsilon>0$ find $N$ such that $n\geq N,$ $\|Kf_n\|<\epsilon.$ Now \begin{eqnarray*}\|Z-\lambda I\| &\geq&  \|(Z^\prime -\lambda I + K)f_n\| \\ &\geq& \|(Z^\prime-\lambda I)f_n\| - \epsilon \\ &=& \|\left(
    \begin{array}{c}
        (au-\lambda)e_n\\
        (1-a)e_n\\
    \end{array}
    \right)\|-\epsilon \\ &=& [ A(\lambda)^2 + |1-a|^2]^\frac{1}{2}-\epsilon
  \end{eqnarray*}
where $A(\lambda)=\mbox{max} \{ |ae^{i\theta}-\lambda|,
|ae^{-i\theta}-\lambda|\}.$ Note that infimum of $A(\lambda)$ is
attained at $\lambda=a\cos\theta.$  Therefore taking limit
$\epsilon\downarrow 0,$ we get \begin{eqnarray*}
  d(Z,\mathbb C) &\geq& [|a|^2\sin^2\theta + |1-a|^2]^\frac{1}{2}\\ &\geq& [|a|^2\sin^2\theta + (1-|a|)^2]^\frac{1}{2}
  \end{eqnarray*}
Consider the quadratic polynomial $f:\mathbb R\rightarrow \mathbb R$
defined by $f(r)=r^2\sin^2\theta+(1-r)^2 $ Then $$f(r)= \frac{\sin
^2\theta}{1+\sin ^2\theta}+ (1+\sin ^2\theta )(r-\frac{1}{1+\sin
^2\theta})^2\geq \frac{\sin ^2\theta}{1+\sin ^2\theta}.$$
% equating
%$f^\prime(r)=0,$ we get $r=\frac{1}{1+\sin^2\theta}$ and
%$f^{\prime\prime}(\frac{1}{1+\sin^2\theta})>0.$ We get $f(r)\geq
%\frac{\sin^2\theta}{1+\sin^2\theta}$ for every $r\in \mathbb R.$
 Therefore $$ \gamma(\phi_1,\phi_2)\geq 2 d(Z,\mathbb C) \geq \frac{2\sin\theta}{\sqrt{1+\sin^2\theta}}.$$

 Note that  for $0<\theta<\frac{\pi}{4},$ $\sqrt{1+\sin^2\theta}< \sqrt{1+\cos^2\theta}<\sqrt{1+\cos\theta}.$

Therefore \begin{eqnarray*} \gamma(\phi_1,\phi_2)&\geq&\frac{2\sin\theta}{\sqrt{1+\sin^2\theta}} \\ &>& \frac{2\sin\theta}{\sqrt{1+\cos\theta}} \\
&=& 2\sqrt{1-\cos\theta} \\ &>& \sqrt{(3+\cos\theta)(1-\cos\theta)}
\\&=& \gamma(\tilde{\phi}_1,\tilde{\phi}_2). \end{eqnarray*} In
particular, $\gamma(\phi_1,\phi_2)\neq
\gamma(\tilde{\phi}_1,\tilde{\phi}_2).$

Now consider the following common representation $(\mathcal E,
(z_1,0),(0,z_2), V)$ where $V\in \mathcal B^a(\mathcal E)$ is given
by $$ V= \left(
\begin{array}{cc}
0 & 0\\
Z & 0\\
\end{array}
\right),$$ with $$Z=\left(
\begin{array}{cc}
u & 0\\
0  & I\\
\end{array}
\right).$$ We see that $W(z_1,0)=(0,z_2)$ and   \begin{eqnarray*}
    \|\sigma-V^*\sigma V\|_{cb} &=& 2d(V,(id\oplus id)^\prime) \\ &=& 2d(\left(
    \begin{array}{cc}
        0  & 0\\
        Z  & 0\\
    \end{array}
    \right),\left(
    \begin{array}{cc}
        \mathbb C I & \mathbb C I\\
        \mathbb C I  & \mathbb C I\\
    \end{array}
    \right)) \\ &=& 2d(Z,\mathbb C) \\ &=& 2d(u,\mathbb C) \\ &=& 2 \sin\theta.
\end{eqnarray*}

We get $$ 2\sin\theta \geq \gamma(\phi_1,\phi_2)\geq \frac{2\sin\theta}{\sqrt{1+\sin^2\theta}}.$$

It is to be noted $\beta(\phi_1,\phi_2)=\sqrt{2}$ (See Example 3.2 of \cite {BhS} )\footnote
{This computation in \cite{BhS} is erroneous. However the result is clear in view of the fact that
$\langle x_1, x_2\rangle  = 0$ for any common representation $(\mathcal F, x_1, x_2)$ of $(\phi _1, \phi _2)$ due to Lemma \ref{identification}}. Therefore $$\gamma(\phi_1,\phi_2)\neq \beta(\phi_1,\phi_2)\sqrt{4-\beta^2(\phi_1,\phi_2)}.$$

{\bf Acknowledgements :}  {The first author acknowledges support through JC Bose Fellowship project and the
 second  author  thanks DST-Inspire for financial support. The title of the paper is
 inspired by Halmos \cite{Halmos}.}
\bibliographystyle{amsplain}

\end{document}